\newtheorem{thm}{Theorem}
\newtheorem{prop}{Proposition}
\newtheorem{lem}{Lemma}
\newtheorem{rem}{Remark}
\newtheorem{cor}{Corollary}
\newtheorem{defi}{Definition}
\def\Top{\mathop{\rm Top}\nolimits}
\def\RTop{\mathop{\rm RTop}\nolimits}
\def\Sch{\mathop{\rm Sch}\nolimits}
\def\Pic{\mathop{\rm Pic}\nolimits}
\def\codim{\mathop{\rm codim}\nolimits}
\def\dim{\mathop{\rm dim}\nolimits}
\def\Tr{\mathop{\rm Tr}\nolimits}
\def\Cor{\mathop{\rm Cor}\nolimits}
\def\SmVar{\mathop{\rm SmVar}\nolimits}
\def\PSmVar{\mathop{\rm PSmVar}\nolimits}
\def\Var{\mathop{\rm Var}\nolimits}
\def\Hom{\mathop{\rm Hom}\nolimits}
\def\Spec{\mathop{\rm Spec}\nolimits}
\def\Coh{\mathop{\rm Coh}\nolimits}
\def\supp{\mathop{\rm supp}\nolimits}
\def\QPVar{\mathop{\rm QPVar}\nolimits}
\def\AnSp{\mathop{\rm AnSp}\nolimits}
\def\CW{\mathop{\rm CW}\nolimits}
\def\PVar{\mathop{\rm PVar}\nolimits}
\def\sing{\mathop{\rm sing}\nolimits}
\def\Im{\mathop{\rm Im}\nolimits}
\def\Cone{\mathop{\rm Cone}\nolimits}
\def\ad{\mathop{\rm ad}\nolimits}
\def\log{\mathop{\rm log}\nolimits}
\def\Diff{\mathop{\rm Diff}\nolimits}
\def\An{\mathop{\rm An}\nolimits}
\def\PSh{\mathop{\rm PSh}\nolimits}
\def\AnSm{\mathop{\rm AnSm}\nolimits}
\def\Tr{\mathop{\rm Tr}\nolimits}
\def\DA{\mathop{\rm DA}\nolimits}
\def\Fun{\mathop{\rm Fun}\nolimits}
\def\coker{\mathop{\rm coker}\nolimits}
\def\Cat{\mathop{\rm Cat}\nolimits}
\def\RCat{\mathop{\rm RCat}\nolimits}
\def\an{\mathop{\rm an}\nolimits}
\def\Mod{\mathop{\rm Mod}\nolimits}
\def\Shv{\mathop{\rm Shv}\nolimits}
\def\Vect{\mathop{\rm Vect}\nolimits}
\def\PSch{\mathop{\rm PSch}\nolimits}
\def\RigVar{\mathop{\rm RigVar}\nolimits}
\def\AbVar{\mathop{\rm AbVar}\nolimits}
\def\dar[#1]{\ar@<2pt>[#1]\ar@<-2pt>[#1]}
\title{De Rham logarithmic classes and Tate conjecture}
\author{Johann Bouali}
\begin{document}

\maketitle

\begin{abstract}
We introduce the notion of De Rham logarithmic classes. 
We show that the De Rham class of an algebraic cycle of a smooth algebraic variety over a field of characteristic zero
is logarithmic and conversely that a logarithmic class of bidegree $(d,d)$ 
is the De Rham class of an algebraic cycle (of codimension $d$). Moreover for smooth projective varieties, 
we show that there are no non trivial logarithmic classes of bidegree $(p,q)$ for $p\neq q$.
For smooth algebraic varieties over a $p$-adic field, we also give an analytic version of this result.
We deduce, from the analytic version, the Tate conjecture for smooth projective varieties over fields of finite type over $\mathbb Q$.
\end{abstract}

\section{Introduction}

\subsection{De Rham logarithmic classes}

In this work, we introduce in a first part for $X$ a noetherian scheme, the notion of logarithmic De Rham cohomology classes
which are for each $j\in\mathbb Z$, the elements of the subgroup 
$H^jOL_X(\mathbb H^j_{et}(X,\Omega^{\bullet}_{X,\log}))\subset\mathbb H^j_{et}(X,\Omega_X^{\bullet})=:H^j_{DR}(X)$
of the De Rham cohomology abelian group, where $OL_X:\Omega^{\bullet}_{X,\log}\hookrightarrow\Omega_X^{\bullet}$
is the subcomplex of abelian sheaves on $X$ consisting of logarithmic forms introduced in definition \ref{wlogdef}. 
To our knowledge, the notion of logarithmic forms was introduced
in the seventies by S.Bloch for varieties over perfect fields of charactersitic $p$ 
(\cite{Illusie}, §3F) in order to compute the Frobenius fixed part of the De Rham-Witt complex. 
Note that this notion is different from the notion of logarithmic forms along a divisor with normal crossing 
as introduced by Deligne for exemple in Th\'eorie de Hodge II.
The presheaves $\Omega^{\bullet}_{X,\log}$, as for $O_X^*$ are not $O_X$ modules, 
nor locally constant but they have good purity properties.
Our main result is that if $X$ is a smooth algebraic variety over a field of characteristic zero, a de Rham cohomology class
which is logarithmic of type $(d,d)$ is the class of an algebraic cycle of codimension $d$.
More precisely, let $X$ be a smooth algebraic variety over a field $k$ of characteristic zero. 
To each algebraic cycle $Z\in\mathcal Z^d(X)$, we associate its De Rham cohomology class $[Z]\in H^{2d}_{DR}(X)$,
which is by definition, as for any Weil cohomology theory, the image of the fundamental class $[Z]\in H^{2d}_{DR,Z}(X)$
by the canonical morphism $H^{2d}_{DR,Z}(X)\to H^{2d}(X)$.
In section 3, we show (c.f. theorem \ref{drlogZ}(i)) that 
\begin{itemize}
\item for $Z\in\mathcal Z^d(X)$, $[Z]\in H^{2d}_{DR}(X)$ is logarithmic of bidegree $(d,d)$, that is 
\begin{equation*}
[Z]=H^{2d}OL_X([Z]_L)\in H^{2d}_{DR}(X), \; [Z]_L\in H^d_{et}(X,\Omega^d_{X,\log})
\end{equation*}
where $H^d_{et}(X,\Omega^d_{X,\log})\subset\mathbb H^{2d}_{et}(X,\Omega^{\bullet}_{X,\log})$ is the canonical subspace,
as all the differentials of $\Omega^{\bullet}_{X,\log}$ vanishes since by definition a logarithmic form is closed.
This fact is a consequence of the fact that
motivic isomorphisms applied to De Rham cohomology preserve logarithmic classes (c.f. proposition \ref{smXZ}). 
The key point is that logarithmic De Rham forms 
on algebraic varieties over $k$ are closed, (trivially) $\mathbb A_k^1$ invariant
and compatible with the transfers maps induced by finite morphisms
of algebraic varieties and in particular finite correspondences. 
Since $\Omega^l_{\log}\subset\Omega^l$ are (trivially) $\mathbb A_k^1$ invariant
presheaves with transfers on the category of smooth algebraic varieties over $k$, by a theorem of Voevodsky
the cohomology presheaves of $\mathbb A_k^1$ invariant presheaves with transfers are $\mathbb A_k^1$ invariant.
\item Conversely, we show (c.f. theorem \ref{drlogZ}(ii)), 
that a logarithmic class $w\in H^{2d}_{DR}(X)$ of bidegree $(d,d)$ is the De Rham class of an algebraic cycle 
(of codimension $d$).
In the case $X$ projective, we also get a vanishing result (c.f. theorem \ref{drlogZ}(iii)') : 
$H^{2d}OL_X(H^{d-k}_{et}(X,\Omega^{d+k}_{X,\log}))=0$ for $k>0$. \\
The proof works as follows :
A logarithmic class of bidegree $(p,q)$ is locally acyclic for the Zariski topology of $X$ since
it is the etale cohomology of a single sheaf. This allows us to proceed by a finite induction 
using the crucial fact that the purity isomorphism for De Rham cohomology preserve logarithmic classes
(c.f. proposition \ref{drlogZcor}). The proof of proposition \ref{drlogZcor} follows from the fact 
that the purity isomorphism is motivic (c.f. \cite{CD}, see proposition \ref{smXZ}), that
the Euler class of a vector bundle of rank $d$ 
over an algebraic variety is logarithmic of bidegree $(d,d)$ (c.f. proposition \ref{drlogZprop})
and that the motivic isomorphisms applied to De Rham cohomology preserve logarithmic classes 
(c.f. proposition \ref{smXZlog}). At the final step, we use, for simplicity, 
the fact that for a scheme $Y$, $H^1_{et}(Y,\Omega^1_{Y,\log})=H^1(Y,O^*_Y)$ is the Picard group of $Y$.
\end{itemize}

For $X$ an algebraic variety over a $p$-adic field, we also introduce the notion of 
logarithmic analytic De Rham cohomology classes. In proposition \ref{GAGAlog}
we give an analytic analogue of theorem \ref{drlogZ}, 
that is if $X$ is a smooth projective variety over a $p$-adic field with good reduction and $X^{\mathcal O}$ is a smooth integral model of $X$, 
denoting $c:\hat X^{\mathcal O}\to X^{\mathcal O}$ the morphism of ringed spaces given by the completion along the ideal generated by $p$,
\begin{itemize}
\item a logarithmic analytic De Rham cohomology class of type $(d,d)$ is the class of a codimension $d$ algebraic formal cycle,
\item there are no non trivial logarithmic analytic De Rham cohomology classes of type $(p,q)$ for $p\neq q$, i.e.
$H^{2d}OL_{\hat X}(H^{d-k}_{pet}(X,\Omega^{d+k}_{\hat X^{(p)},\log,\mathcal O}))=0$ for $k\neq 0$.
\end{itemize}
The proof of proposition \ref{GAGAlog} has two part. 
First a logarithmic analytic class of bidegree $(p,q)$ is acyclic for the pro-etale topology on each open subset
$U\subset X$ such that there exists an etale map 
$e:U^{\mathcal O}\to\mathbb G_m^{d_U}\subset\mathbb A_{O_K}^{d_U}$ finite over $e(U^{\mathcal O})$ and such that $\Pic(U^{\mathcal O}/p)=0$, 
where $U^{\mathcal O}\subset X^{\mathcal O}$ are integral models of $U\subset X$ (proposition \ref{UXpet}). The same hold for etale cohomology. Hence, 
\begin{equation*}
H^q_{pet}(X,\Omega^p_{\hat X^{(p)},\log,\mathcal O})=H^q_{et}(X,\Omega^p_{\hat X^{(p)},\log,\mathcal O}), \; p,q\in\mathbb Z.
\end{equation*}
The second part is motivic and inspired on the proof of theorem \ref{drlogZ}, considering the big site of smooth integral models,
a logarithmic analytic class of bidegree $(p,q)$ is locally acyclic for the Zariski topology of a closed subset $D\subset X$ since
it is the etale cohomology of a single sheaf. Note that $D$ has bad reduction in general.
This allows us to proceed by a finite induction 
using the crucial fact that the purity isomorphism for De Rham cohomology preserve logarithmic analytic classes
since the purity isomorphism is motivic (c.f. \cite{CD}, see proposition \ref{smXZ}).

\subsection{Tate conjecture}

Let $X$ be a smooth projective variety over a field $k$ of finite type over $\mathbb Q$. 
Up to split $X$ by its connected components, we may assume that $X$ is connected, hence irreducible, of dimension $d_X$.
Let $p$ be a prime number unramified over $k$ such that $X$ has good reduction at $p$.
We fix an embedding $\sigma_p:k\hookrightarrow\mathbb C_p$ and we denote $\bar k$ the algebraic closure of $k$ inside $\mathbb C_p$. 
We prove the Tate conjecture for $X$ with $\mathbb Q_p$ coefficient (c.f. theorem \ref{Tate}).
The proof works as follows :
let $\hat k_{\sigma_p}\subset\mathbb C_p$ be the $p$-adic completion of $k$ with respect to $\sigma_p$. 
We choose a smooth proper scheme $X^{\mathcal O}_{\hat k_{\sigma_p}}$ over $O_{\hat k_{\sigma_p}}$ which is an integral model of $X_{\hat k_{\sigma_p}}$, 
i.e. such that $X_{\hat k_{\sigma_p}}^{\mathcal O}\times_{O_{\hat k_{\sigma_p}}}\hat k_{\sigma_p}=X_{\hat k_{\sigma_p}}$ 
with $X_{\hat k_{\sigma_p}}^{\mathcal O}$ irreducible and surjective over $O_{\hat k_{\sigma_p}}$.
We will consider $c:\hat X_{\hat k_{\sigma_p}}^{\mathcal O}\to X_{\hat k_{\sigma_p}}^{\mathcal O}$ 
the morphism of ringed spaces which is the formal completion of $X_{\hat k_{\sigma_p}}^{\mathcal O}$ along the ideal generated by $p$.
Let $\alpha\in H_{et}^{2d}(X_{\bar k},\mathbb Q_p)(d)^G$ be a Tate class of $X$, where $G:=Gal(\bar k/k)$.
Denote by $G_p:=Gal(\mathbb C_p/\hat k_{\sigma_p})\subset G$ the local Galois group. In proposition \ref{LatticeLog}, we show, 
using a result of \cite{Illusie} together with results of \cite{Sch} on the pro-etale topology,
that $\alpha$ gives by the $p$-adic crystalline comparison isomorphism
\begin{equation*}
H^{2d}R\alpha(X):H_{et}^{2d}(X_{\mathbb C_p},\mathbb Z_p)\otimes_{\mathbb Z_p}\mathbb B_{cris,\hat k_{\sigma_p}}
\xrightarrow{\sim}H^{2d}_{DR}(X)\otimes_k\mathbb B_{cris,\hat k_{\sigma_p}},
\end{equation*}
an analytic logarithmic de Rham class 
\begin{equation*}
w(\alpha):=H^{2d}R\alpha(X)(\alpha)\in 
H^{2d}OL_{X_{\hat k_{\sigma_p}}}(\mathbb H_{pet}^{2d}(X_{\hat k_{\sigma_p}},\Omega^{\bullet\geq d}_{\hat X^{(p)}_{\hat k_{\sigma_p}},\log,\mathcal O}))
\subset H^{2d}_{DR}(\hat X^{\mathcal O}_{\hat k_{\sigma_p}})\otimes_{O_{\hat k_{\sigma_p}}}\hat k_{\sigma_p}=H_{DR}^{2d}(X_{\hat k_{\sigma_p}}).
\end{equation*}
Proposition \ref{GAGAlog} then implies that 
\begin{equation*}
w(\alpha)\in\Im(\Omega(\gamma^{\vee}_{Z}):H^{2d}_{DR,\hat Z^{\mathcal O}}(\hat X^{\mathcal O}_{\hat k_{\sigma_p}})\to 
H^{2d}_{DR}(\hat X^{\mathcal O}_{\hat k_{\sigma_p}})\otimes_{O_{\hat k_{\sigma_p}}}\hat k_{\sigma_p}=H_{DR}^{2d}(X_{\hat k_{\sigma_p}})), 
\end{equation*}
where $Z^{\mathcal O}=\cup_{i=1}^rZ_i^{\mathcal O}\subset X^{\mathcal O}_{\hat k_{\sigma_p}}$ is a zariski closed subset 
such that all irreducible components $(Z_i^{\mathcal O})_{1\leq i\leq r}$ are of codimension $d$ 
and proper surjective over the integral ring $O_{\hat k_{\sigma_p}}$ of $\hat k_{\sigma_p}$, 
and $Z:=Z^{\mathcal O}\times_{O_{\hat k_{\sigma_p}}}\hat k_{\sigma_p}$. 
Here for $Y$ an algebraic variety over $\hat k_{\sigma_p}$ and $Y^{\mathcal O}$ is an integral model of $Y$,
$H^i_{DR}(\hat Y^{\mathcal O}):=\mathbb H^i_{et}(Y^{\mathcal O},\Omega_{\hat Y^{\mathcal O}}^{\bullet})$
is the formal de Rham cohomology of $Y^{\mathcal O}$ (see definition \ref{wlogdef}). 
Note that it does depends on the integral model since it is the limit of the de Rham complexes but NOT the de Rham complex of the limit. 
Also note that if $Y$ is smooth projective and admits a smooth projective integral model then the integral model is unique in codimension one of the special fiber. 
But if $Y$ is smooth but non proper and admits a smooth integral model then it has infinitely many special fiber birational classes of smooth integral models.
We get
\begin{equation*}
w(\alpha)=\sum_{i=1}^r\sum_{j=1}^{s_i}n_{ij}[(Z_i^{\mathcal O}/p)_j]\in 
H^{2d}_{DR}(\hat X^{\mathcal O}_{\hat k_{\sigma_p}})\otimes_{O_{\hat k_{\sigma_p}}}\hat k_{\sigma_p}=H_{DR}^{2d}(X_{\hat k_{\sigma_p}}), 
\end{equation*}
where $n_{ij}\in\mathbb Q_p$ and $((Z_i^{\mathcal O}/p)_j)_{1\leq j\leq s_i}$ are the irreducible components of $Z_i^{\mathcal O}/p$ 
which are all of codimension $d$ in $X^{\mathcal O}_{\hat k_{\sigma_p}}/p$ since the reduction mod $p$ consists on one equation. 
Note that $Z^{\mathcal O}_i$ has bad reduction mod $p$ in general, in particular $Z_i^{\mathcal O}/p$ may be reducible while $Z_i^{\mathcal O}$ is irreducible. 
By lemma \ref{Aut}, there exists an etale morphism $r_e:X^{\mathcal O}_e\to X^{\mathcal O}_{\hat k_{\sigma_p}}$ of integral models over $O_{\hat k_{\sigma_p}}$ 
such that for each $1\leq i\leq r$ we have $r_e(X^{\mathcal O}_e)\cap Z_i\neq\emptyset$ and $Aut_{O_{\hat k_{\sigma_p}}}(X^{\mathcal O}_e,Z^{\mathcal O})$ acts transitively
on $(T_{ij}:=r_e^{-1}((Z_i^{\mathcal O}/p)_j))_{1\leq j\leq s_i}$, where $Aut_{O_{\hat k_{\sigma_p}}}(X^{\mathcal O}_e,Z^{\mathcal O})$ denote the automorphisms $g$ of 
$X^{\mathcal O}_e$ over $O_{\hat k_{\sigma_p}}$ such that $g(r_e^{-1}(Z^{\mathcal O}))=r_e^{-1}(Z^{\mathcal O})$.
Take a compactification $\bar r_e:\bar X^{\mathcal O}_e\to X^{\mathcal O}_{\hat k_{\sigma_p}}$ of $r_e$ which is a finite surjective morphism.
Consider then the finite surjective morphism of integral models over $O_{\hat k_{\sigma_p}}$ which is the composite
\begin{equation*}
r':X^{'\mathcal O}\xrightarrow{\epsilon^{\mathcal O}}\bar X^{\mathcal O}_e\xrightarrow{\bar r_e}X^{\mathcal O}_{\hat k_{\sigma_p}},
\end{equation*}
where $\epsilon:(X',E)\to(\bar X_e,\mathcal R)$ is a desingularization of the pair $(\bar X_e,\mathcal R)$ of projective varieties over $\hat k_{\sigma_p}$,
where $\mathcal R$ is the ramification locus of $\bar r_e\otimes_{O_{\hat k_{\sigma_p}}}\hat k_{\sigma_p}$.
We have then, considering $Z^{\mathcal O}/p=\cup_{i=1}^r\cup_{j=1}^{s_i}(Z_i^{\mathcal O}/p)_j$,
\begin{equation*}
Z^{'\mathcal O}:=r^{'-1}(Z^{\mathcal O}), \; Z^{'\mathcal O}/p=\cup_{i=1}^r\cup_{j=1}^{s_i}T_{ij}, \; T_{ij}:=r^{'-1}((Z_i^{\mathcal O}/p)_j).
\end{equation*}
Note that $X'$ has bad reduction in general. In particular $X^{'\mathcal O}$ is NOT smooth. 
Consider the factorization $r':X^{'\mathcal O}\xrightarrow{i}\mathbb P^N\times X^{\mathcal O}\xrightarrow{p}X^{\mathcal O}$ 
where $i$ is the graph closed embedding and $p$ is the projection. 
Denote by $C^{\mathcal O}:=C_{X^{'\mathcal O}/\mathbb P^N\times X^{\mathcal O}}\xrightarrow{p'}X^{'\mathcal O}$ the normal cone of $i$.
Since $X'$ is smooth $C:=C^{\mathcal O}\otimes_{O_{\hat k_{\sigma_p}}}\hat k_{\sigma_p}=N_{X'/\mathbb P^N\times X}\xrightarrow{p'}X'$ is a vector bundle.
We denote again by $c:\widehat{\mathbb P^N\times X^{\mathcal O}}\to \mathbb P^N\times X^{\mathcal O}$, $c:\hat C^{\mathcal O}\to C^{\mathcal O}$ the formal completion maps.
In particular $C^{\mathcal O}$ is not smooth but $C$ is smooth. 
Let $k\subset k'\subset\mathbb C_p$ be a subfield of finite type over $\mathbb Q$ over which 
$(Z_i\subset X_{\hat k_{\sigma_p}})_{1\leq i\leq r}$ and $r':X'\to X_{\hat k_{\sigma_p}}$ are defined.
Take an (algebraic) embedding of field $k'(X')\hookrightarrow\mathbb C_p\simeq\mathbb C$. Then 
\begin{equation*}
A:=Aut_{O_{\hat k_{\sigma_p}}}(X^{'\mathcal O},Z^{\mathcal O})\subset Aut_{k'}(k'(X'))\subset Aut_{k'}(\mathbb C)\subset G=Aut_{k}(\mathbb C)
\end{equation*}
Note that $\hat k_{\sigma_p}(X')$ is NOT a p-adic field.
We have then the following commutative diagram
\begin{equation*}
\xymatrix{H_{et}^{2d}(C_{\bar k},\mathbb Q_p)(d)^{G_p}\ar[rr]^{\alpha\mapsto w(\alpha)} & \, &
H_{DR}^{2d}(C)\ar[r]^{c^*} & H_{DR}^{2d}(\hat C^{\mathcal O})\otimes_{O_{\hat k_{\sigma_p}}}\hat k_{\sigma_p}  \\
H_{et}^{2d}(X_{\bar k},\mathbb Q_p)(d)^{G_p}\ar[rr]^{\alpha\mapsto w(\alpha)}\ar[u]^{p^{'*}r^{'*}} & \, &
H_{DR}^{2d}(X_{\hat k_{\sigma_p}})\ar[r]^{c^*}\ar[u]^{p^{'*}r^{'*}} & H_{DR}^{2d}(\hat X^{\mathcal O})\otimes_{O_{\hat k_{\sigma_p}}}\hat k_{\sigma_p}
\ar[u]^{p^{'*}r^{'*}}}
\end{equation*}
whose arrows of the upper row commute with the action of $Aut_{O_{\hat k_{\sigma_p}}}(C^{\mathcal O})$, in particular with the action of
$A:=Aut_{O_{\hat k_{\sigma_p}}}(X^{'\mathcal O},Z^{\mathcal O})\subset Aut_{O_{\hat k_{\sigma_p}}}(C^{\mathcal O})$. 
The first arrows of the rows are the maps of the p-adic De Rham comparison theorem for $X_{\mathbb C_p}$ and $C_{\mathbb C_p}$ respectively ($C$ and $X$ are smooth).
Since $\alpha\in H_{et}^{2d}(X_{\bar k},\mathbb Q_p)(d)^G$, we have 
$p^{'*}r^{'*}\alpha\in H_{et}^{2d}(C_{\bar k},\mathbb Q_p) (d)^A$ for each $1\leq i\leq r$.
We have then
\begin{itemize}
\item $p^{'*}r^{'*}w(\alpha)=c^*w(p^{'*}r^{'*}\alpha)\in(H_{DR}^{2d}(\hat C^{\mathcal O})\otimes_{O_{\hat k_{\sigma_p}}}\hat k_{\sigma_p})^A$
\item and
\begin{equation*}
p^{'*}r^{'*}w(\alpha)=\sum_{i=1}^r\sum_{j=1}^{s_i}n_{ij}[T_{ij}]\in
\Im(H_{DR,\widehat{p^{'-1}(Z^{'\mathcal O})}}^{2d}(\hat C^{\mathcal O})\to H_{DR}^{2d}(\hat C^{\mathcal O})\otimes_{O_{\hat k_{\sigma_p}}}\hat k_{\sigma_p}), 
\end{equation*}
since 
\begin{equation*}
w(\alpha)=\sum_{i=1}^r\sum_{j=1}^{s_i}n_{ij}[(Z_i^{\mathcal O}/p)_j]\in 
H^{2d}_{DR}(\hat X^{\mathcal O}_{\hat k_{\sigma_p}})\otimes_{O_{\hat k_{\sigma_p}}}\hat k_{\sigma_p}=H_{DR}^{2d}(X_{\hat k_{\sigma_p}}), 
\end{equation*}
\end{itemize}
Take for each $1\leq i\leq r$, $1\leq s'_i\leq s_i$ such that
\begin{equation*}
([(Z_i^{\mathcal O}/p)_j])_{1\leq i\leq r,1\leq j\leq s'_i}\in H^{2d}_{DR}(\hat X^{\mathcal O}_{\hat k_{\sigma_p}})
\otimes_{O_{\hat k_{\sigma_p}}}\hat k_{\sigma_p}=H_{DR}^{2d}(X_{\hat k_{\sigma_p}})
\end{equation*}
are linearly independent. Hence 
\begin{equation*}
([T_{ij}])_{1\leq i\leq r,1\leq j\leq s'_i}\in H_{DR}^{2d}(\hat C^{\mathcal O})\otimes_{O_{\hat k_{\sigma_p}}}\hat k_{\sigma_p}
\end{equation*}
are linearly independent since
\begin{eqnarray*}
r'_*([T_{ij}])=[(Z_i^{\mathcal O}/p)_j], \; 
r'_*:H_{DR,\hat Z^{'\mathcal O}}^{2d}(\hat C^{\mathcal O})\to H_{DR,\hat Z^{\mathcal O}}^{2d}(\hat X^{\mathcal O})\to H_{DR}^{2d}(\hat X^{\mathcal O}).
\end{eqnarray*}
We get, since $A$ acts transitively on $(T_{ij}:=r^{'-1}((Z_i^{\mathcal O}/p)_j))_{1\leq j\leq s_i}$ for each $1\leq i\leq r$,
\begin{equation*}
n_{ij}=n_i, \; \mbox{for each} \; 1\leq i\leq r \; \mbox{and all} \; 1\leq j\leq s_i.
\end{equation*}
This gives 
\begin{eqnarray*}
w(\alpha)=\sum_{i=1}^rn_i[Z_i^{\mathcal O}/p]=\sum_{i=1}^rn_i[Z_i^{\mathcal O}]\in 
H^{2d}_{DR}(\hat X^{\mathcal O}_{\hat k_{\sigma_p}})\otimes_{O_{\hat k_{\sigma_p}}}\hat k_{\sigma_p}=H_{DR}^{2d}(X_{\hat k_{\sigma_p}}), 
\end{eqnarray*}
that is
\begin{equation*}
w(\alpha)=[Z]\in H_{DR}^{2d}(X_{\hat k_{\sigma_p}}), \; Z:=\sum_{i=1}^rn_i[Z_i]\in\mathcal Z^d(X_{\hat k_{\sigma_p}})\otimes\mathbb Q_p.
\end{equation*} 
Note that the local Galois group $G_p:=Gal(\mathbb C_p/\hat k_{\sigma_p})\subset G$ fixes the components $T_{ij}$ of $r^{'-1}(Z^{\mathcal O}_i/p)$ for each $1\leq i\leq r$,
since classes of formal algebraic cycles are $G_p$ invariant. Here is where we need that the class $\alpha$ is invariant by the absolute Galois group $G$ of $k$,
not just invariant by $G_p$.
Since the Hilbert schemes of $X$ are defined over $k$, we get 
\begin{equation*}
w(\alpha)=[Z^N]\in H_{DR}^{2d}(X_{\hat k_{\sigma_p}}), \; Z^N\in\mathcal Z^d(X_{\bar k})\otimes\mathbb Q_p.
\end{equation*} 
By the $p$-adic crystalline or de Rham comparison isomorphism, we get $\alpha=[Z^N_{\mathbb C_p}]\in H_{et}^{2d}(X_{\mathbb C_p},\mathbb Q_p)$.
Since $\alpha$ is $G$ invariant and since $Z_N$ is defined over $\bar k$, we get
\begin{equation*}
\alpha=[Z^N]=[Z^{N'}]\in H_{et}^{2d}(X_{\bar k},\mathbb Q_p), 
Z^{N'}:=(1/\# gZ^N)\sum_{g\in G\cdot Z}gZ^N\in\mathcal Z^d(X)\otimes\mathbb Q_p.
\end{equation*} 
As pointed by (\cite{Raskind},\cite{Liedtke}) the $p$-adic Tate conjecture, that is for $p$-adic field with $\mathbb Q_p$ coefficients is false in general.
This is due to the fact that for $p\in\Spec(O_k)$ such that $X^{\mathcal O}_{\hat k_{\sigma_p}}$ has good reduction and for 
$Z^{\mathcal O}\subset X^{\mathcal O}_{\hat k_{\sigma_p}}$ a closed subset with each irreducible components surjective over $O_{\hat k_{\sigma_p}}$,  
\begin{equation*}
c^*:H_{DR,Z}^{2d}(X_{\hat k_{\sigma_p}})=\oplus_{i=1}^r\hat k_{\sigma_p}[Z_i]
\to H^{2d}_{DR,\hat Z^{\mathcal O}}(\hat X^{\mathcal O}_{\hat k_{\sigma_p}})\otimes_{O_{\hat k_{\sigma_p}}}\hat k_{\sigma_p}
=\oplus_{i=1}^r\oplus_{j=1}^{s_i}\hat k_{\sigma_p}[(Z^{\mathcal O}_i/p)_j]
\end{equation*}
is NOT surjective in general since $Z^{\mathcal O}/p$ has more irreducible components then $Z$ in general ($s:=\sum_{i=1}^rs_i\geq r$), and 
\begin{equation*}
c^*:H_{DR}^{2d}(X_{\hat k_{\sigma_p}}\backslash Z)\to 
H^{2d}_{DR}(\widehat{(X_{\hat k_{\sigma_p}}\backslash Z})^{\mathcal O})\otimes_{O_{\hat k_{\sigma_p}}}\hat k_{\sigma_p}
\end{equation*}
is NOT injective in general ($X_{\hat k_{\sigma_p}}\backslash Z$ is not proper) : see remark \ref{RasEx}.
On the other side, for a $p$-adic field $K\subset\mathbb C_p$ and $X$ a smooth projective variety over $K$, 
\begin{equation*}
\dim H_{et}^j(X_{\mathbb C_p},\mathbb Q_\ell)(k)^G>\dim H_{et}^j(X_{\mathbb C_p},\mathbb Q_p)(k)^G 
\end{equation*}
for $\ell\neq p$ in general and it is known that the $\ell$-adic Tate conjecture is not true.
Hence there is no relation with the classical Tate conjecture over finite fields.

Let $X$ be a smooth projective variety over $\mathbb C$. 
Then $X$ is defined over a subfield $k\subset\mathbb C$ of finite type over $\mathbb Q$, that is $X=X_k\otimes_k\mathbb C$.
Take an isomorphism $\mathbb C\simeq\mathbb C_p$
with $p\in\mathbb N$ a prime number such that $X_{\mathbb C_p}$ has good reduction at $p$.
As the Tate conjecture holds for $X_k$ with $\mathbb Q_p$ coefficients, the standard conjectures holds for $X$
and any absolute Hodge class of $X$ is the class of an algebraic cycle (corollary \ref{TateCor}).

I am grateful for professor F.Mokrane for help and support during this work.

\section{Preliminaries and Notations}

\subsection{Notations}

\begin{itemize}

\item Denote by $\Top$ the category of topological spaces and $\RTop$ the category of ringed spaces.
\item Denote by $\Cat$ the category of small categories and $\RCat$ the category of ringed topos.
\item For $\mathcal S\in\Cat$ and $X\in\mathcal S$, we denote $\mathcal S/X\in\Cat$ the category whose
objects are $Y/X:=(Y,f)$ with $Y\in\mathcal S$ and $f:Y\to X$ is a morphism in $\mathcal S$, and whose morphisms
$\Hom((Y',f'),(Y,f))$ consists of $g:Y'\to Y$ in $\mathcal S$ such that $f\circ g=f'$.

\item For $(\mathcal S,O_S)\in\RCat$ a ringed topos, we denote by 
\begin{itemize}
\item $\PSh(\mathcal S)$ the category of presheaves of abelian group on $\mathcal S$ and
$\PSh_{O_S}(\mathcal S)$ the category of presheaves of $O_S$ modules on $\mathcal S$, 
whose objects are 
\begin{equation*}
\PSh_{O_S}(\mathcal S)^0:=\left\{(M,m),M\in\PSh(\mathcal S),m:M\otimes O_S\to M\right\},
\end{equation*}
together with the forgetful functor $o:\PSh(\mathcal S)\to \PSh_{O_S}(\mathcal S)$,
for $F\in\PSh(\mathcal S)$ and $X\in\mathcal S$, we denote $F(X):=\Gamma(X,F)$ the abelian group of section over $X$,
\item $C(\mathcal S)=C(\PSh(\mathcal S))$ and $C_{O_S}(\mathcal S)=C(\PSh_{O_S}(\mathcal S))$ 
the big abelian category of complexes of presheaves of $O_S$ modules on $\mathcal S$,
\item $C_{O_S(2)fil}(\mathcal S):=C_{(2)fil}(\PSh_{O_S}(\mathcal S))\subset C(\PSh_{O_S}(\mathcal S),F,W)$,
the big abelian category of (bi)filtered complexes of presheaves of $O_S$ modules on $\mathcal S$ 
such that the filtration is biregular and $\PSh_{O_S(2)fil}(\mathcal S):=(\PSh_{O_S}(\mathcal S),F,W)$.
\end{itemize}

\item Let $(\mathcal S,O_S)\in\RCat$ a ringed topos with topology $\tau$. For $F\in C_{O_S}(\mathcal S)$,
we denote by $k:F\to E_{\tau}(F)$ the canonical flasque resolution in $C_{O_S}(\mathcal S)$ (see \cite{B5}).
In particular for $X\in\mathcal S$, $H^*(X,E_{\tau}(F))\xrightarrow{\sim}\mathbb H_{\tau}^*(X,F)$.

\item For $f:\mathcal S'\to\mathcal S$ a morphism with $\mathcal S,\mathcal S'\in\RCat$,
endowed with topology $\tau$ and $\tau'$ respectively, we denote for $F\in C_{O_S}(\mathcal S)$ and each $j\in\mathbb Z$,
\begin{itemize}
\item $f^*:=H^j\Gamma(\mathcal S,k\circ\ad(f^*,f_*)(F)):\mathbb H^j(\mathcal S,F)\to\mathbb H^j(\mathcal S',f^*F)$,
\item $f^*:=H^j\Gamma(\mathcal S,k\circ\ad(f^{*mod},f_*)(F)):\mathbb H^j(\mathcal S,F)\to\mathbb H^j(\mathcal S',f^{*mod}F)$, 
\end{itemize}
the canonical maps.

\item For $\mathcal X\in\Cat$ a (pre)site and $p$ a prime number, we consider the full subcategory
\begin{equation*}
\PSh_{\mathbb Z_p}(\mathcal X)\subset\PSh(\mathbb N\times\mathcal X), \; \; 
F=(F_n)_{n\in\mathbb N}, \; p^nF_n=0, \; F_{n+1}/p^n\xrightarrow{\sim}F_n
\end{equation*}
$C_{\mathbb Z_p}(\mathcal X):=C(\PSh_{\mathbb Z_p}(\mathcal X))\subset C(\mathbb N\times\mathcal X)$ and
\begin{equation*}
\mathbb Z_p:=\mathbb Z_{p,\mathcal X}:=((\mathbb Z/p^*\mathbb Z)_{\mathcal X})\in\PSh_{\mathbb Z_p}(\mathcal X)
\end{equation*}
the diagram of constant presheaves on $\mathcal X$.

\item For $X\in\Top$ and $Z\subset X$ a closed subset, denoting $j:X\backslash Z\hookrightarrow X$ the open complementary, 
we will consider
\begin{equation*}
\Gamma^{\vee}_Z\mathbb Z_X:=\Cone(\ad(j_!,j^*)(\mathbb Z_X):j_!j^*\mathbb Z_X\hookrightarrow\mathbb Z_X)\in C(X)
\end{equation*}
and denote for short $\gamma^{\vee}_Z:=\gamma^{\vee}_X(\mathbb Z_X):\mathbb Z_X\to\Gamma^{\vee}_Z\mathbb Z_X$
the canonical map in $C(X)$.

\item Denote by $\Sch\subset\RTop$ the subcategory of schemes (the morphisms are the morphisms of locally ringed spaces).
We denote by $\PSch\subset\Sch$ the full subcategory of proper schemes.
For a commutative ring $A$, we consider $\Sch/A:=\Sch/\Spec A$ the category of schemes over $\Spec A$,
that is whose object are $X:=(X,a_X)$ with $X\in\Sch$ and $a_X:X\to\Spec A$ a morphism
and whose objects are morphism of schemes $f:X'\to X$ such that $f\circ a_{X'}=a_X$. 
We denote by $\PSch/A\subset\Sch/A$ the full subcategory of projective schemes over $A$.
We then denote by
\begin{itemize}
\item $\Var(k)=\Sch^{ft}/k\subset\Sch/k$ the full subcategory consisting of algebraic varieties over $k$, 
i.e. schemes of finite type over $k$,
\item $\PVar(k)\subset\QPVar(k)\subset\Var(k)$ 
the full subcategories consisting of quasi-projective varieties and projective varieties respectively, 
\item $\PSmVar(k)\subset\SmVar(k)\subset\Var(k)$,  $\PSmVar(k):=\PVar(k)\cap\SmVar(k)$,
the full subcategories consisting of smooth varieties and smooth projective varieties respectively.
\end{itemize}
For a morphism of commutative rings $\sigma:A\hookrightarrow B$, we have the extention of scalar functor
\begin{eqnarray*}
\otimes_AB:\Sch/A\to\Sch/B, \; X\mapsto X_B:=X_{B,\sigma}:=X\otimes_AB, \; (f:X'\to X)\mapsto (f_B:=f\otimes I:X'_B\to X_B).
\end{eqnarray*}
which is left ajoint to the restriction of scalar 
\begin{eqnarray*}
Res_{A/B}:\Sch/B\to\Sch/A, \; X=(X,a_X)\mapsto X=(X,\sigma\circ a_X), \; (f:X'\to X)\mapsto (f:X'\to X)
\end{eqnarray*}
For a morphism of fields $\sigma:k\hookrightarrow K$, the extention of scalar functor restricts to a functor
\begin{eqnarray*}
\otimes_kK:\Var(k)\to\Var(K), \; X\mapsto X_K:=X_{K,\sigma}:=X\otimes_kK, \; (f:X'\to X)\mapsto (f_K:=f\otimes I:X'_K\to X_K).
\end{eqnarray*}
and for $X\in\Var(k)$ we have $\pi_{k/K}(X):X_K\to X$ the projection in $\Sch/k$.

\item For $X\in\Sch$ a noetherian scheme and $p\in\mathbb N$, we denote by $\mathcal Z^p(X)$ the free abelian group
generated by closed subset of codimension $p$.

\item For $X\in\Sch$ and $p$ a prime number, 
we denote by $c:\hat X:=\hat X^{(p)}\to X$ the morphism in $\RTop$ which is the completion along the ideal generated by $p$.

\item For $K$ a field which is complete with respect to a $p$-adic norm, we denote by $\RigVar(K)\subset\RTop$
the subcategory of rigid analytic space (i.e. locally given by affinoid which are Tate algebra spectrum). 
We denote by $\An_p:\RigVar(K)\to\Var(K)$ the analytic functor and for $X\in\Var(K)$,
$\an_{X,p}:=\An_{p,|X}:\hat X^{(p)}:=\hat X^{\mathcal O,(p)}\times_{O_K}K\to X$ the corresponding morphism in $\RTop$.

\item For $k$ a field, we denote by $\AbVar(k)$ the category of abelian varieties over $k$, 
i.e. an algebraic variety over $k$ endowed with a structure of (abelian) group which is a morphism of algebraic varieties.

\item For $X\in\Sch$ a scheme, we denote by $\Pic(X)$ its Picard group of line bundle (or equivalently rational classes of Cartier divisors)

\item Denote $\Sch^2\subset\RTop^2$ the subcategory
whose objects are couples $(X,Z)$ with $X=(X,O_X)\in\Sch$ and $Z\subset X$ a closed subset
and whose set of morphisms $\Hom((X',Z'),(X,Z))$ consists of $f:X'\to X$ of locally ringed spaces
such that $f^{-1}(Z)\subset Z'$.

\item Let $k$ be a field of characteristic zero. 
Denote $\SmVar^2(k)\subset\Var^2(k)\subset\Sch^2/k$ the full subcategories
whose objects are $(X,Z)$ with $X\in\Var(k)$, resp. $X\in\SmVar(k)$, and $Z\subset X$ is a closed subset,
and whose morphisms $\Hom((X',Z')\to(X,Z))$ consists of $f:X'\to X$ of schemes over $k$ 
such that $f^{-1}(Z)\subset Z'$.

\item Denote by $\AnSp(\mathbb C)\subset\RTop$ the full subcategory of analytic spaces over $\mathbb C$,
and by $\AnSm(\mathbb C)\subset\AnSp(\mathbb C)$ the full subcategory of smooth analytic spaces (i.e. complex analytic manifold).
Denote by $\CW\subset\Top$ the full subcategory of $CW$ complexes.
Denote by $\Diff(\mathbb R)\subset\RTop$ the full subcategory of differentiable (real) manifold.  

\item For $K$ a field which is complete with respect to a $p$-adic norm, we denote $O_K\subset K$ the ring the integers, i.e elements with norm lower or equal to $1$.

\end{itemize}

\subsection{The pro-etale site of schemes}

For $X\in\Sch$, we denote $X^{et}\subset\Sch/X$ the etale site and
$X^{pet}\subset\Sch/X$ the pro etale site (see \cite{BSch})
which is the full subcategory of $\Sch/X$ whose object consists of weakly etale maps $U\to X$ (that is flat maps
$U\to X$ such that $\Delta_U:U\to U\times_XU$ is also flat) and whose topology is generated by fpqc covers.
We then have the canonical morphism of site
\begin{equation*}
\nu_X:X^{pet}\to X^{et}, (U\to X)\mapsto (U\to X)
\end{equation*}  
For $F\in C(X^{et})$, 
\begin{equation*}
\ad(\nu_X^*,R\nu_{X*})(F):F\to R\nu_{X*}\nu_X^*F 
\end{equation*}
is an isomorphism in $D(X^{et})$, in particular, for each $n\in\mathbb Z$
\begin{equation*}
\nu_X^*:\mathbb H^n_{et}(X,F)\xrightarrow{\sim}\mathbb H^n_{pet}(X,\nu_X^*F) 
\end{equation*}
are isomorphisms,
For $X\in\Sch$, we denote
\begin{itemize}
\item $\underline{\mathbb Z_p}_X:=\varprojlim_{n\in\mathbb N}\nu_X^*(\mathbb Z/p^n\mathbb Z)_{X^{et}}\in\PSh(X^{pet})$ 
the constant presheaf on $\mathcal X$,
\item $l_{p,\mathcal X}:=(p(*)):\underline{\mathbb Z_p}_{X}\to\nu_X^*(\mathbb Z/p\mathbb Z)_{X^{et}}$ 
the projection map in $\PSh(\mathbb N\times\mathcal X^{pet})$.
\end{itemize}
An affine scheme $U\in\Sch$ is said to be $w$-contractible if any faithfully flat weakly etale map $V\to U$, 
$V\in\Sch$, admits a section. We will use the facts that (see \cite{BSch}):
\begin{itemize}
\item Any affine scheme $X\in\Sch$ admits a faithfully flat pro-etale map $r:U\to X$ with $U$ w-contractile.
\item Any scheme $X\in\Sch$ admits a pro-etale affine cover $(r_i:X_i\to X)_{i\in I}$ 
with for each $i\in I$, $X_i$ a $w$-contractile affine scheme and $r_i:X_i\to X$ a weakly etale map.
For $X\in\Var(k)$ with $k$ a field, we may assume $I$ finite since the topological space $X$ is then quasi-compact.
\item If $U\in\Sch$ is a $w$-contractible affine scheme, then for any sheaf $F\in\Shv(U^{pet})$, 
$H_{pet}^i(U,F)=0$ for $i\neq 0$ since $\Gamma(U,-)$ is an exact functor.
\end{itemize} 

\subsection{Integral models and $p$-adic completion of algebraic varieties over a $p$-adic field}

For $K$ a field which is complete with respect to a $p$-adic norm,
we consider $O_K\subset K$ the subring of $K$ consisting of integral elements, that is $x\in K$ such that $|x|\leq 1$.
\begin{itemize}
\item For $X\in\PVar(K)$ irreducible, we will consider $X^{\mathcal O}\in\PSch/O_K$ irreducible a (non canonical) integral model of $X$, 
i.e. $X^{\mathcal O}\otimes_{O_K}K=X$ and the structural morphism $a_{X^{\mathcal O}}:X^{\mathcal O}\to\Spec(O_K)$ is surjective.
\item For $X\in\PVar(K)$, let $X=\cup_{i=1}^rX_i$ where $X_i$ are the irreducible components of $X$,
we will consider $X^{\mathcal O}:=\cup_{i=1}^rX_i^{\mathcal O}\in\PSch/O_K$ a (non canonical) integral model of $X$, with for each $1\leq i\leq r$,
$X_i^{\mathcal O}\in\PSch/O_K$ is a (non canonical) integral model of $X$.
\item For $X\in\Var(K)$, we will consider $X^{\mathcal O}\in\Sch/O_K$ a (non canonical) integral model of $X$, 
i.e. $X^{\mathcal O}=\bar X^{\mathcal O}\backslash Z^{\mathcal O}$ for 
$\bar X\in\PVar(K)$ a compactification of $X$, $Z:=\bar X\backslash X$,
where $\bar X^{\mathcal O}\in\PSch/O_K$ is an integral model of $\bar X$ and 
$Z^{\mathcal O}:=V(I_Z^{\mathcal O})\subset\bar X^{\mathcal O}$ is an integral model of $Z$. 
\end{itemize}
We consider $\Sch^{int}/O_K:=O(\PSch^2/O_K)\subset\Sch/O_K$ the full subcategory
consisting of integral models of algebraic varieties over $K$,
where $O:\PSch^2/O_K\to\Sch^{ft}/O_K, \; O(X,Z)=X\backslash Z$ is the canonical functor, and 
\begin{equation*}
\Sch^{int,sm}/O_K:=\Sch^{int}/O_K\cap\Sch^{sm}/O_K\subset\Sch/O_K
\end{equation*}
the full subcategory consisting of smooth integral models i.e. the integral models of (smooth) algebraic varieties over $K$
which are smooth over $O_K$. 
For $X\in\Var(K)$, we will consider $X^{\mathcal O}\in\Sch^{int}/O_K$ a (non canonical) integral model of $X$,
we then conisder $X^{\mathcal O,et}\subset(\Sch^{int}/O_K)/X^{\mathcal O}$ 
the full subcategory consisting of $e:U=U^{\mathcal O}\to X^{\mathcal O}$ such that $e$ is an etale morphism of schemes, 
we have then the commutative diagram of sites
\begin{equation*}
\xymatrix{X^{pet}\ar[r]^{r}\ar[d]^{\nu_X} & X^{\mathcal O,pet}\ar[d]^{\nu_{X^{\mathcal O}}} \\
X^{et}\ar[r]^{r} & X^{\mathcal O,et}}, \; \; 
r(t:U=U^{\mathcal O}\to X^{\mathcal O}):=(t\otimes_{O_K}K:U\otimes_{O_K}K\to X^{\mathcal O}\otimes_{O_K}K=X)
\end{equation*}
Note that the inclusion $X^{\mathcal O,et}\subset(X^{\mathcal O})^{et}$ is strict by definition, where we recall
$(X^{\mathcal O})^{et}\subset(\Sch/O_K)/X^{\mathcal O}$ is the full subcategory consisting of etale structural morphisms.
We denote $i:X^{\mathcal O}/p\hookrightarrow X^{\mathcal O}$ the closed embedding of the special fiber and 
$i:(X^{\mathcal O}/p)^{et}\hookrightarrow X^{\mathcal O,et}$,
$i(h:W^{\mathcal O}\to X^{\mathcal O}):=(h\times_{X^{\mathcal O}}X^{\mathcal O}/p:W^{\mathcal O}_{/p}\to X^{\mathcal O}_{/p})$, the associated morphism of site.

We will use the following proposition

\begin{prop}\label{oetetprop}
Let $K$ be a field which is complete with respect to a $p$-adic norm. Let $X\in\Var(K)$ and $X^{\mathcal O}\in\Sch^{int}/O_K$ an integral model of $X$.
Consider the morphism of site $r:X^{et}\to X^{\mathcal O,et}$. For $F=i_*F'\in\PSh(X^{\mathcal O,et})$ with $F'\in\PSh((X^{\mathcal O}/p)^{et})$, 
we have $R^qr_*r^*F=0$ for $q\in\mathbb Z$, $q\neq 0$, that is the morphism in $D(X^{\mathcal O,et})$ 
\begin{equation*}
\ad(r^*,Rr_*):F\to Rr_*r^*F
\end{equation*}
is an isomorphism.
\end{prop}

\begin{proof}
For $S\in\Sch$ a scheme, $S^{pet}\subset\Sch/S$ denote the pro etale site consisting of morphism $h:V\to S$ with $h$ flat and locally of finite presentation, 
and $\epsilon_S:S^{pet}\to S^{et}$ the morphism of site given by the inclusion $S^{et}\subset S^{pet}$.
For $G\in\PSh(S^{et})$, we have $R^q\epsilon_{S*}\epsilon_S^*G=0$ for $q\in\mathbb Z$, $q\neq 0$ (see \cite{BSch}).
We have also for $T^{\mathcal O}\in\Sch^{int}/O_K$, $T^{\mathcal O,pet}\subset(\Sch^{int}/O_K)/T^{\mathcal O}$ 
the full subcategory consisting of morphism $h:W^{\mathcal O}\to T^{\mathcal O}$ with $h$ pro etale and locally of finite presentation,
and $\epsilon_T^{\mathcal O}:T^{\mathcal O,pet}\to T^{\mathcal O,et}$ the morphism of site given by the inclusion $T^{\mathcal O,et}\subset T^{\mathcal O,pet}$.
Consider then the commutative diagram of sites,
\begin{equation*}
\xymatrix{X^{pet}\ar[d]^{\epsilon:=\epsilon_X}\ar[rr]^{r} & \, & X^{\mathcal O,pet}\ar[d]^{\epsilon^{\mathcal O}:=\epsilon_X^{\mathcal O}} 
& \, & (X^{\mathcal O}/p)^{pet}\ar[ll]^{i}\ar[d]^{\epsilon^{\mathcal O}_{/p}:=\epsilon_{X^{\mathcal O}_{/p}}} \\
X^{et}\ar[rr]^{r} & \, & X^{\mathcal O,et} & \, & (X^{\mathcal O}/p)^{et}\ar[ll]^{i} }
\end{equation*}
where $i:(X^{\mathcal O}/p)^{pet}\hookrightarrow X^{\mathcal O,pet}$, 
$i(h:W^{\mathcal O}\to X^{\mathcal O}):=(h\times_{X^{\mathcal O}}X^{\mathcal O}/p:W^{\mathcal O}_{/p}\to X^{\mathcal O}_{/p})$, is the associated morphism of site.
We also have, for  $F=i_*F'\in\PSh(X^{\mathcal O,et})$ with $F'\in\PSh((X^{\mathcal O}/p)^{et})$, $q\in\mathbb Z$, $q\neq 0$, 
\begin{equation*}
R^q\epsilon^{\mathcal O}_*\epsilon^{\mathcal O*}F=R^q\epsilon^{\mathcal O}_*\epsilon^{\mathcal O*}i_*F'=R^q\epsilon^{\mathcal O}_*i_*\epsilon^{\mathcal O*}_{/p}F'
=i_*R^q\epsilon^{\mathcal O}_{/p*}\epsilon^{\mathcal O*}_{/p}F'=0
\end{equation*}
Since on the other hand $r:X^{pet}\to X^{\mathcal O,pet}$ is an equivalence of category (for $h:Y\to X$ a flat morphism with $Y$ w-contractile, 
there exist an integral model $Y^{\mathcal O}$ such that $h$ extend to a flat morphism $\bar h:Y^{\mathcal O}\to X^{\mathcal O}$, 
indeed we can choose an arbirary integral model $Y^{\mathcal O,0}$ of $Y$ an take 
$Y^{\mathcal O}:=\bar\Gamma_h\subset Y^{\mathcal O,0}\times X^{\mathcal O}$ the closure of the graph of $h$), 
we get the proposition using the left square of the commutative diagram.
\end{proof}

Let $K$ be a field which is complete with respect to a $p$-adic norm and $X\in\PVar(K)$ projective.
For $X^{\mathcal O}\in\PSch/O_K$ an integral model of $X$, i.e. satisfying $X^{\mathcal O}\otimes_{O_K}K=X$,
we consider $\hat X^{(p)}:=\hat X^{\mathcal O,(p)}\otimes_{O_K}K\in\RigVar(K)$ and the morphism in $\RTop$ 
\begin{equation*}
\an_{X,p}:\hat X:=\hat X^{\mathcal O}\otimes_{O_K}K\to X^{\mathcal O}\otimes_{O_K}K=X. 
\end{equation*}
given by the analytical functor. We have also the Raynaud generic fiber morphism in $\RTop$.
\begin{equation*}
\eta_{X,p}:\hat X:=\hat X^{\mathcal O}\otimes_{O_K}K\to\hat X^{\mathcal O,(p)}. 
\end{equation*}
We have then the commutative diagram in $\RTop$
\begin{equation*}
\xymatrix{\hat X\ar[d]^{\an_{X,p}}\ar[rr]^{\eta_{X,p}} & \, & \hat X^{\mathcal O}\ar[d]^{c} \\
X\ar[rr]^{r} & \, & X^{\mathcal O}}
\end{equation*}
Recall (see section 2.1) that $c:\hat X^{\mathcal O}\to X^{\mathcal O}$ 
is the morphism in $\RTop$ which is the completion along the ideal generated by $p$.
Then, by GAGA (c.f. EGA 3), for $F\in\Coh_{O_X}(X)$ a coherent sheaf of $O_X$ module, 
for all $k\in\mathbb Z$, the canonical map
$c^*:H^k(X^{\mathcal O},F)\xrightarrow{\sim}H^k(\hat X^{\mathcal O},c^{*mod}F)$ is an isomorphism.

\subsection{De Rham cohomology}

We recall some properties of the De Rham cohomology.

\begin{itemize}
\item We have
\begin{eqnarray*}
\Omega^{\bullet}\in C(\Sch), X\mapsto\Omega^{\bullet}(X):=\Gamma(X,\Omega^{\bullet}_X), \\
(f:X'\to X)\mapsto\Omega^{\bullet}(f):=f^*:\Gamma(X,\Omega^{\bullet}_X)\to\Gamma(X',\Omega^{\bullet}_{X'})
\end{eqnarray*}
Let $X\in\Sch$. Considering its De Rham complex $\Omega_X^{\bullet}:=DR(X)(O_X)$,
we have for $j\in\mathbb Z$ its De Rham cohomology $H^j_{DR}(X):=\mathbb H^j(X,\Omega^{\bullet}_X)$.
\item We will consider
\begin{eqnarray*}
\Omega^{\bullet}_{/k}\in C(\Var(k)), X\mapsto\Omega^{\bullet}_{/k}(X):=\Gamma(X,\Omega^{\bullet}_X), \\
(f:X'\to X)\mapsto\Omega^{\bullet}_{/k}(f):=f^*:\Gamma(X,\Omega^{\bullet}_X)\to\Gamma(X',\Omega^{\bullet}_{X'})
\end{eqnarray*}
and its restriction to $\SmVar(k)\subset\Var(k)$.
Let $X\in\Var(k)$. Considering its De Rham complex $\Omega_X^{\bullet}:=\Omega_{X/k}^{\bullet}:=DR(X/k)(O_X)$,
we have for $j\in\mathbb Z$ its De Rham cohomology $H^j_{DR}(X):=\mathbb H^j(X,\Omega^{\bullet}_X)$.
The differentials of $\Omega_X^{\bullet}:=\Omega_{X/k}^{\bullet}$ are by definition $k$-linear,
thus $H^j_{DR}(X):=\mathbb H^j(X,\Omega^{\bullet}_X)$ has a structure of a $k$ vector space.
\end{itemize}

If $X\in\SmVar(k)$, then $H^j_{DR}(X)=\mathbb H_{et}^j(X,\Omega^{\bullet}_X)$
since $\Omega^{\bullet}_{/k}\in C(\SmVar(k))$ is $\mathbb A^1$ local and admits transfers where
(see \cite{B5}).
Note that for $X\in\Var(k)$ singular, we also consider its realized De Rham cohomology 
\begin{equation*}
\tilde H^j_{DR}(X):=\mathbb H^j(X_{\bullet},\Omega^{\bullet}_{X_{\bullet}})=
\mathbb H^j(\tilde X,R\Gamma_X\Omega^{\bullet}_{\tilde X})
\end{equation*}
where $\epsilon:X_{\bullet}\to X$ in $\Fun(\Delta,\SmVar(k))$ is a simplicial desingularization of $X$
and $X\hookrightarrow\tilde X$ is a closed embedding with $\tilde X\in\SmVar(k)$,
and note that $H^j_{DR}(X)$ is NOT isomorphic to $\tilde H^j_{DR}(X)$ in general 
since $\Omega^{\bullet}_{/k}\in C(\Var(k))$ does NOT satisfied cdh descent. 

Let $X\in\Var(k)$. Let $X=\cup_{i=1}^sX_i$ an open affine cover. 
For $I\subset[1,\ldots,s]$, we denote $X_I:=\cap_{i\in I}X_i$. 
We get $X_{\bullet}\in\Fun(P([1,\ldots,s]),\Var(k))$.
Since quasi-coherent sheaves on affine noetherian schemes are acyclic, we have
for each $j\in\mathbb Z$, $H^j_{DR}(X)=\Gamma(X_{\bullet},\Omega^{\bullet}_{X^{\bullet}})$.

\subsection{Singular chains}

We denote $\mathbb I^n:=[0,1]^n\in\Diff(\mathbb R)$ (with boundary).
For $X\in\Top$ and $R$ a ring, we consider its singular cochain complex
\begin{equation*}
C^*_{\sing}(X,R):=(\mathbb Z\Hom_{\Top}(\mathbb I^*,X)^{\vee})\otimes R 
\end{equation*}
and for $l\in\mathbb Z$ its singular cohomology $H^l_{\sing}(X,R):=H^nC^*_{\sing}(X,R)$.
For $f:X'\to X$ a continous map with $X,X'\in\Top$, we have the canonical map of complexes
\begin{equation*}
f^*:C^*_{\sing}(X,R)\to C^*_{\sing}(X,R), \sigma\mapsto f^*\sigma:=(\gamma\mapsto\sigma(f\circ\gamma)).
\end{equation*}
In particular, we get by functoriality the complex 
\begin{equation*}
C^*_{X,R\sing}\in C_R(X), \; (U\subset X)\mapsto C^*_{\sing}(U,R)
\end{equation*}
We will consider the canonical embedding 
\begin{equation*}
C^*\iota_{2i\pi\mathbb Z/\mathbb C}(X):C^*_{\sing}(X,2i\pi\mathbb Z)\hookrightarrow C^*_{\sing}(X,\mathbb C), \,
\alpha\mapsto\alpha\otimes 1
\end{equation*}
whose image consists of cochains $\alpha\in C^j_{\sing}(X,\mathbb C)$ such that $\alpha(\gamma)\in 2i\pi\mathbb Z$
for all $\gamma\in\mathbb Z\Hom_{\Top}(\mathbb I^*,X)$.
We get by functoriality the embedding in $C(X)$
\begin{eqnarray*}
C^*\iota_{2i\pi\mathbb Z/\mathbb C,X}:C^*_{X,2i\pi\mathbb Z,\sing}\hookrightarrow C^*_{X,\mathbb C,\sing}, \\
(U\subset X)\mapsto 
(C^*\iota_{2i\pi\mathbb Z/\mathbb C}(U):C^*_{\sing}(U,2i\pi\mathbb Z)\hookrightarrow C^*_{\sing}(U,\mathbb C))
\end{eqnarray*}
We recall we have 
\begin{itemize}
\item For $X\in\Top$ locally contractile, e.g. $X\in\CW$, and $R$ a ring, the inclusion in $C_R(X)$
$c_X:R_X\to C^*_{X,R\sing}$ is by definition an equivalence top local and that we get 
by the small chain theorem, for all $l\in\mathbb Z$, an isomorphism 
$H^lc_X:H^l(X,R_X)\xrightarrow{\sim}H^l_{\sing}(X,R)$.
\item For $X\in\Diff(\mathbb R)$, the restriction map 
\begin{equation*}
r_X:\mathbb Z\Hom_{\Diff(\mathbb R)}(\mathbb I^*,X)^{\vee}\to C^*_{\sing}(X,R), \; 
w\mapsto w:(\phi\mapsto w(\phi))
\end{equation*}
is a quasi-isomorphism by Whitney approximation theorem.
\end{itemize}

\subsection{Algebraic cycles, motives and the theorem of Voevodsky}

For $X\in\Sch$ noetherian irreducible and $d\in\mathbb N$, 
we denote by $\mathcal Z^d(X)$ the group of algebraic cycles of codimension $d$,
which is the free abelian group generated by irreducible closed subsets of codimension $d$.

For $X,X'\in\Sch$ noetherian, with $X'$ irreducible, we denote 
$\mathcal Z^{fs/X'}(X'\times X)\subset\mathcal Z_{d_{X'}}(X'\times X)$
which consist of algebraic cycles $\alpha=\sum_in_i\alpha_i\in\mathcal Z_{d_{X'}}(X'\times X)$ such that,
denoting $\supp(\alpha)=\cup_i\alpha_i\subset X'\times X$ its support and $p':X'\times X\to X'$ the projection,
$p'_{|\supp(\alpha)}:\supp(\alpha)\to X'$ is finite surjective.

\begin{itemize}
\item Let $k$ be a field.
We denote by $\Cor\SmVar(k)$ the category such that the objects are $\left\{X\in\SmVar(k)\right\}$
and such that $\Hom_{\Cor\SmVar(k)}(X',X):=\mathcal Z^{fs/X'}(X'\times X)$.
See \cite{CD} for the composition law.
We denote by $\Tr:\Cor\SmVar(k)\to\SmVar(k)$ the morphism of site given by the embedding 
$\Tr:\SmVar(k)\hookrightarrow\Cor\SmVar(k)$. Let $F\in\PSh(\SmVar(k))$. We say that $F$ admits
transfers if $F=\Tr_*F$ with $F\in\PSh(\Cor\SmVar(k))$.

\item Let $A$ a regular commutative noetherian ring.
We denote by $\Cor\Sch^{sm}/A$ the category such that the objects are $\left\{X\in\Sch^{sm}/A\right\}$
and such that $\Hom_{\Cor\Sch^{sm}/A}(X',X):=\mathcal Z^{fs/X'}(X'\times X)$.
See \cite{CD} for the composition law.
We denote by $\Tr:\Cor\Sch^{sm}/A\to\Sch^{sm}/A$ the morphism of site given by the embedding 
$\Tr:\Sch^{sm}/A\hookrightarrow\Cor\Sch^{sm}/A$. Let $F\in\PSh(\Sch^{sm}/A)$. We say that $F$ admits
transfers if $F=\Tr_*F$ with $F\in\PSh(\Cor\Sch^{sm}/A)$.
\end{itemize}

We recall the following standard notion (see e.g. \cite{CD} or \cite{B5}) :

\begin{defi}
\begin{itemize}
\item[(i)]Let $F\in\PSh(\Var(k))$ or $F\in\PSh(\SmVar(k))$. We say that $F$ is $\mathbb A^1$ invariant
if for all $X\in\Var(k)$ (resp. $X\in\SmVar(k)$), $p^*:=F(p):F(X)\to F(X\times\mathbb A^1)$ is an isomorphism
where $p:X\times\mathbb A^1\to X$ is the projection.
\item[(ii)]Let $F\in\PSh(\Var(k))$ or $F\in\PSh(\SmVar(k))$. We say that $F$ is $\mathbb A^1$ local
if for all $j\in\mathbb Z$ and all $X\in\Var(k)$ (resp. $X\in\SmVar(k)$), 
$p^*:=H^jE_{et}(F)(p):H_{et}^j(X,F)\to H^j_{et}(X\times\mathbb A^1,F)$ is an isomorphism.
\item[(ii)']Let $F\in C(\Var(k))$ or $F\in C(\SmVar(k))$. We say that $F$ is $\mathbb A^1$ local
if for all $j\in\mathbb Z$ and all $X\in\Var(k)$ (resp. $X\in\SmVar(k)$), 
$p^*:=H^jE_{et}(F)(p):\mathbb H_{et}^j(X,F)\to\mathbb H^j_{et}(X\times\mathbb A^1,F)$ is an isomorphism.
Note that (ii) is a particular case of (ii)'.
\end{itemize}
\end{defi}

For $X\in\Var(k)$ and $Z\subset X$ a closed subset, denoting $j:X\backslash Z\hookrightarrow X$ the open complementary, 
we will consider
\begin{equation*}
\Gamma^{\vee}_Z\mathbb Z_X:=\Cone(\ad(j_{\sharp},j^*)(\mathbb Z_X):
\mathbb Z_X\hookrightarrow\mathbb Z_X)\in C(\Var(k)^{sm}/X)
\end{equation*}
and denote for short $\gamma^{\vee}_Z:=\gamma^{\vee}_Z(\mathbb Z_X):\mathbb Z_X\to\Gamma^{\vee}_Z\mathbb Z_X$
the canonical map in $C(\Var(k)^{sm}/X)$. Denote $a_X:X\to\Spec k$ the structural map.
For $X\in\Var(k)$ and $Z\subset X$ a closed subset, we have the motive of $X$ with support in $Z$ defined as 
\begin{equation*}
M_Z(X):=a_{X!}\Gamma^{\vee}_Za_X^!\mathbb Z\in\DA(k).
\end{equation*}
If $X\in\SmVar(k)$, we will also consider
\begin{equation*}
a_{X\sharp}\Gamma^{\vee}_Z\mathbb Z_X:=\Cone(a_{X\sharp}\circ\ad(j_{\sharp},j^*)(\mathbb Z_X):
\mathbb Z(U)\hookrightarrow\mathbb Z(X))=:\mathbb Z(X,X\backslash Z)\in C(\SmVar(k)).
\end{equation*}
Then for $X\in\SmVar(k)$ and $Z\subset X$ a closed subset
\begin{equation*}
M_Z(X):=a_{X!}\Gamma^{\vee}_Za_X^!\mathbb Z
=a_{X\sharp}\Gamma^{\vee}_Z\mathbb Z_X=:\mathbb Z(X,X\backslash Z)\in\DA(k).
\end{equation*}

\begin{itemize}
\item Let $(X,Z)\in\Sch^2$ with $X\in\Sch$ a noetherian scheme and $Z\subset X$ a closed subset.
We have the deformation $(D_ZX,\mathbb A^1_Z)\to\mathbb A^1$, $(D_ZX,\mathbb A^1_Z)\in\Sch^2$ 
of $(X,Z)$ by the normal cone $C_{Z/X}\to Z$, i.e. such that 
\begin{equation*}
(D_ZX,\mathbb A^1_Z)_s=(X,Z), \, s\in\mathbb A^1\backslash 0, \;  (D_ZX,\mathbb A^1_Z)_0=(C_{Z/X},Z).
\end{equation*}
We denote by $i_1:(X,Z)\hookrightarrow (D_ZX,\mathbb A^1_Z)$ and 
$i_0:(C_{Z/X},Z)\hookrightarrow (D_ZX,\mathbb A^1_Z)$ the closed embeddings in $\Sch^2$.
\item Let $k$ be a field of characteristic zero. Let $X\in\SmVar(k)$. 
For $Z\subset X$ a closed subset of pure codimension $c$,
consider a desingularisation $\epsilon:\tilde Z\to Z$ of $Z$ and denote $n:\tilde Z\xrightarrow{\epsilon}Z\subset X$.
We have then the morphism in $\DA(k)$
\begin{equation*}
G_{Z,X}:M(X)\xrightarrow{D(\mathbb Z(n))}M(\tilde Z)(c)[2c]\xrightarrow{\mathbb Z(\epsilon)}M(Z)(c)[2c]
\end{equation*}
where $D:\Hom_{\DA(k)}(M_c(\tilde Z),M_c(X))\xrightarrow{\sim}\Hom_{\DA(k)}(M(X),M(\tilde Z)(c)[2c])$
is the duality isomorphism from the six functors formalism (moving lemma of Suzlin and Voevodsky)
and $\mathbb Z(n):=\ad(n_!,n^!)(a_X^!\mathbb Z)$, noting that $n_!=n_*$ since $n$ is proper and that
$a_X^!=a_X^*[d_X]$ and $a_{\tilde Z}^!=a_{\tilde Z}^*[d_Z]$ since $X$, resp. $\tilde Z$, are smooth
(considering the connected components, we may assume $X$ and $\tilde Z$ of pure dimension).
\end{itemize}

We recall the following facts (see \cite{CD} and \cite{B5}):

\begin{prop}\label{smXZ}
Let $k$ be a field a characteristic zero.
Let $X\in\SmVar(k)$ and $i:Z\subset X$ a smooth subvariety of pure codimension d.
Then $C_{Z/X}=N_{Z/X}\to Z$ is a vector bundle of rank $d$.
The closed embeddings $i_1:(X,Z)\hookrightarrow (D_ZX,\mathbb A^1_Z)$ and 
$i_0:(C_{Z/X},Z)\hookrightarrow (D_ZX,\mathbb A^1_Z)$ in $\SmVar^2(k)$ induces isomorphisms of motives
$\mathbb Z(i_1):M_Z(X)\xrightarrow{\sim}M_{\mathbb A^1_Z}(D_ZX)$ and 
$\mathbb Z(i_0):M_Z(N_{Z/X})\xrightarrow{\sim}M_{\mathbb A^1_Z}(D_ZX)$ in $\DA(k)$.
We get the excision isomorphism in $\DA(k)$
\begin{equation*}
P_{Z,X}:=\mathbb Z(i_0)^{-1}\circ\mathbb Z(i_1):M_Z(X)\xrightarrow{\sim}M_Z(N_{Z/X}).
\end{equation*}
We have 
\begin{equation*}
Th(N_{Z/X})\circ P_{Z,X}\circ\gamma^{\vee}_Z(\mathbb Z_X)=G_{Z,X}:=D(\mathbb Z(i)):M(X)\to M(Z)(d)[2d].
\end{equation*}
\end{prop}

\begin{proof}
See \cite{CD}.
\end{proof}

We will use the following theorem of Voevodsky :

\begin{thm}\label{Voethm}
(Voevodsky)Let $k$ be a perfect field (e.g. $k$ a field of characteristic zero). 
Let $F\in\PSh(\SmVar(k),\mathbb Q)$. If $F$ is $\mathbb A^1$ invariant and admits transfers, 
then for all $j\in\mathbb Z$, $H^jE_{et}(F)\in\PSh(\SmVar(k))$ are $\mathbb A^1$ invariant.
That is, if $F$ is $\mathbb A^1$ invariant and admits transfers then $F$ is $\mathbb A^1$ local.
\end{thm}

\begin{proof}
By \cite{VoeMW}, $H^jE_{Nis}(F)\in\PSh(\SmVar(k))$ are $\mathbb A^1$ invariant.
On the other hand since $F$ takes values in $\mathbb Q$-vector spaces, 
$H^jE_{Nis}(F)=H^jE_{et}(F)$.
\end{proof}

\subsection{The logaritmic De Rham complexes}

We introduce the logarithmic De Rham complexes

\begin{defi}\label{wlogdef}
\begin{itemize}
\item[(i)] Let $X=(X,O_X)\in\RCat$ a ringed topos, we have in $C(X)$ the subcomplex of presheaves of abelian groups
\begin{eqnarray*}
OL_X:\Omega^{\bullet}_{X,\log}\hookrightarrow\Omega_X^{\bullet}, \; 
\mbox{s.t. for} \; X^o\in X \; \mbox{and}\;  p\in\mathbb N, \, p\geq 1, \\
\Omega^p_{X,\log}(X^o):=
<df_{\alpha_1}/f_{\alpha_1}\wedge\cdots\wedge df_{\alpha_p}/f_{\alpha_p}, f_{\alpha_k}\in O_X^*(X^o)>
\subset\Omega^p_X(X^o),
\end{eqnarray*}
where $\Omega_X^{\bullet}:=DR(X)(O_X)\in C(X)$ is the De Rham complex and 
$O_X^*(X^o)\subset O_X(X^o)$ is the multiplicative group 
consisting of invertible elements for the multiplication,
here $<,>$ stand for the sub-abelian group generated by. By definition, for $w\in\Omega^p_X(X^o)$, 
$w\in\Omega^p_{X,\log}(X^o)$ if and only if there exists $(n_i)_{1\leq i\leq s}\in\mathbb Z$ and 
$(f_{i,\alpha_k})_{1\leq i\leq s,1\leq k\leq p}\in O_X^*(X^o)$ such that
\begin{equation*}
w=\sum_{1\leq i\leq s}n_idf_{i,\alpha_1}/f_{i,\alpha_1}\wedge\cdots\wedge df_{i,\alpha_p}/f_{i,\alpha_p}
\in\Omega^p_X(X^o).
\end{equation*}
For $p=0$, we set $\Omega^0_{X,\log}:=\mathbb Z$ if $\mathbb Z\subset O_X$
and $\Omega^0_{X,\log}:=\mathbb Z/n$ if $O_X$ is a ring of characteristic $n$.
Let $f:X'=(X',O_{X'})\to X=(X,O_X)$ a morphism with $X,X'\in\RCat$.
Consider the morphism $\Omega(f):\Omega_X^{\bullet}\to f_*\Omega_{X'}^{\bullet}$ in $C(X)$.
Then, $\Omega(f)(\Omega^{\bullet}_{X,\log})\subset f_*\Omega^{\bullet}_{X',\log}$.
\item[(ii)] For $k$ a field, we get from (i), for $X\in\Var(k)$, the embedding in $C(X)$
\begin{equation*}
OL_X:\Omega^{\bullet}_{X,\log}\hookrightarrow\Omega_X^{\bullet}:=\Omega^{\bullet}_{X/k},
\end{equation*}
such that, for $X^o\subset X$ an open subset and $w\in\Omega^p_X(X^o)$,
$w\in\Omega^p_{X,\log}(X^o)$ if and only if there exists $(n_i)_{1\leq i\leq s}\in\mathbb Z$ and 
$(f_{i,\alpha_k})_{1\leq i\leq s,1\leq k\leq p}\in O_X^*(X^o)$ such that
\begin{equation*}
w=\sum_{1\leq i\leq s}n_idf_{i,\alpha_1}/f_{i,\alpha_1}\wedge\cdots\wedge df_{i,\alpha_p}/f_{i,\alpha_p}
\in\Omega^p_X(X^o),
\end{equation*}
and for $p=0$, $\Omega^0_{X,\log}:=\mathbb Z$ if $k$ is of characteristic zero, $\Omega^0_{X,\log}:=\mathbb Z/p$
if $k$ is of characteristic $p$. 
We get an embedding in $C(\Var(k))$
\begin{eqnarray*}
OL:\Omega^{\bullet}_{/k,\log}\hookrightarrow\Omega^{\bullet}_{/k}, \; 
\mbox{given by}, \, \mbox{for} \, X\in\Var(k), \\
OL(X):=OL_X:\Omega^{\bullet}_{/k,\log}(X):=\Gamma(X,\Omega^{\bullet}_{X,\log})
\hookrightarrow\Gamma(X,\Omega^{\bullet}_X)=:\Omega^{\bullet}_{/k}(X)
\end{eqnarray*}
and its restriction to $\SmVar(k)\subset\Var(k)$.
\item[(iii)]Let $K$ be a field of characteristic zero which is complete for a $p$-adic norm. 
Recall that $O_K\subset K$ denotes its ring of integers. Let $X\in\Var(K)$. 
Let $X^{\mathcal O}\in\Sch^{int}/O_K$ be an integral model of $X$, in particular $X^{\mathcal O}\otimes_{O_K}K=X$. 
Consider the full subcategory $X^{\mathcal O,et}\subset(\Sch^{int}/O_K)/X^{\mathcal O}$.
We have then the morphisms of sites $r:X^{et}\to X^{\mathcal O,et}$
and $r:X^{pet}\to X^{\mathcal O,pet}$ such that $\nu_{X^{\mathcal O}}\circ r=r\circ\nu_X$.
We will consider mainly the embedding of $C(X^{\mathcal O,pet})$
\begin{eqnarray*}
OL_{\hat X^{\mathcal O,(p)}}:=(OL_{X^{\mathcal O}/p^n})_{n\in\mathbb N}:
\Omega_{\hat X^{(p)},\log,\mathcal O}^{\bullet}:=
\varprojlim_{n\in\mathbb N}\nu_{X^{\mathcal O}}^*a_{et}\Omega^{\bullet}_{X^{\mathcal O,et}/p^n,\log} \\
\hookrightarrow\Omega_{\hat X^{\mathcal O}}^{\bullet}:=
\varprojlim_{n\in\mathbb N}\nu_{X^{\mathcal O}}^*a_{et}\Omega^{\bullet}_{X^{\mathcal O,et}/p^n/(O_K/p^n)}
\end{eqnarray*}
where for $p=0$, we set $\Omega^0_{X^{\mathcal O,et}/p^n,\log}:=\mathbb Z/p^n$, and 
we recall $c:\hat X^{\mathcal O}\to X^{\mathcal O}$ the morphism in $\RTop$ 
is given by the completion along the ideal generated by $p$, and 
$a_{et}:\PSh(\hat X^{\mathcal O})\to\Shv_{et}(\hat X^{\mathcal O})$ is the sheaftification functor. 
It induces the canonical morphism of $C(X^{pet})$
\begin{equation*}
OL_{\hat X^{(p)}}:=r^*OL_{\hat X^{\mathcal O,(p)}}:
r^*\Omega_{\hat X^{(p)},\log,\mathcal O}^{\bullet}\hookrightarrow r^*\Omega_{\hat X^{\mathcal O}}^{\bullet}.
\end{equation*} 
Note that the inclusion $\Omega^l_{X^{\mathcal O,et},\log}/p^n\subset\Omega^{\bullet}_{X^{\mathcal O,et}/p^n,\log}$ is strict in general.
Note that 
\begin{equation*}
\Omega_{X^{pet},\log,\mathcal O}^{\bullet}:=\nu_X^*\Omega_{X^{et},\log,\mathcal O}^{\bullet}\in C(X^{pet}),
\end{equation*}
but 
\begin{eqnarray*}
\underline{\mathbb Z_p}_X:=\varprojlim_{n\in\mathbb N}\nu_X^*(\mathbb Z/p^n\mathbb Z)_{X^{et}}\in C(X^{pet}), \, \mbox{and} \;
r^*\Omega_{\hat X^{(p)},\log,\mathcal O}^{\bullet}
:=r^*\varprojlim_{n\in\mathbb N}\nu_{X^{\mathcal O}}^*a_{et}\Omega_{X^{\mathcal O,et}/p^n,\log}
\in C(X^{pet}) 
\end{eqnarray*}
are NOT the pullback of etale sheaves by $\nu_X$. 
We will denote for short $\Omega_{\hat X^{(p)},\log,\mathcal O}^{\bullet}:=r^*\Omega_{\hat X^{(p)},\log,\mathcal O}^{\bullet}\in C(X^{pet})$, 
in particular for $e:U\to X$ an etale map 
$\mathbb H^i_{\tau}(U,\Omega_{\hat X^{(p)},\log,\mathcal O}^{\bullet}):=\mathbb H^i_{\tau}(U,r^*\Omega_{\hat X^{(p)},\log,\mathcal O}^{\bullet})$, 
where $\tau$ is either the etale or pro-etale topology.
For $i\in\mathbb Z$, we will consider the formal de Rham cohomology of $X^{\mathcal O}$
\begin{equation*}
H^i_{DR}(\hat X^{\mathcal O}):=\mathbb H^i_{et}(X^{\mathcal O},\Omega_{\hat X^{\mathcal O}}^{\bullet})=
\mathbb H^i_{et}(X,r^*\Omega_{\hat X^{\mathcal O}}^{\bullet}), \;
\end{equation*}
and for $Z^{\mathcal O}\subset X^{\mathcal O}$ a closed embedding of integral models
\begin{equation*}
H^i_{DR,\hat Z^{\mathcal O}}(\hat X^{\mathcal O}):=\mathbb H^i_{et,Z^{\mathcal O}}(X^{\mathcal O},\Omega_{\hat X^{\mathcal O}}^{\bullet})=
\mathbb H^i_{et,Z}(X,r^*\Omega_{\hat X^{\mathcal O}}^{\bullet})
\end{equation*}
where the last equalities follows from proposition \ref{oetetprop}.
\item[(iii)'] We consider the morphism of sites 
\begin{equation*}
r:\SmVar(K)\to\Sch^{int,sm}/O_K, \; 
X^{\mathcal O}\in\Sch^{int,sm}/O_K\mapsto X:=X^{\mathcal O}\otimes_{O_K}K,
\end{equation*}
and for $X^{\mathcal O}\in\Sch^{int,sm}/O_K$ and $X:=X^{\mathcal O}\otimes_{O_K}K$
the commutative diagram of sites
\begin{equation*}
\xymatrix{\SmVar(K)\ar[r]^r\ar[d]^{o_X} & (\Sch^{int,sm}/O_K)\ar[d]^{o_X} \\
X^{et}\ar[r]^r & X^{\mathcal O,et}}
\end{equation*}
with $o_X(U^{\mathcal O}\to X^{\mathcal O})=U^{\mathcal O}$ and $o_X(U\to X)=U$.
We will consider the embedding of $C(\Sch^{int}/O_K)$
\begin{eqnarray*}
OL_{/O_K,an}:\Omega^{\bullet,an}_{/K,\log,\mathcal O}\hookrightarrow\Omega_{/O_K}^{\bullet,an}, \;
\mbox{for} \, X^{\mathcal O}\in(\Sch^{int,sm}/O_K), \\ 
OL_{/O_K,an}(X^{\mathcal O}):=OL_{\hat X^{\mathcal O,(p)}}(X^{\mathcal O}):
\Omega^{\bullet}_{\hat X^{(p)},\log,\mathcal O}(X^{\mathcal O})\hookrightarrow\Omega^{\bullet}_{\hat X^{\mathcal O}}(X^{\mathcal O}).
\end{eqnarray*}
and its restriction to $\Sch^{int,sm}/O_K\subset\Sch^{int}/O_K$.
We get the embedding of $C(\SmVar(K))$
\begin{eqnarray*}
OL_{/K,an}:=r^*OL_{/O_K,an}:r^*\Omega^{\bullet,an}_{/K,\log,\mathcal O}\hookrightarrow r^*\Omega_{/O_K}^{\bullet,an}.
\end{eqnarray*}
For $X^{\mathcal O}\in\Sch^{int,sm}/O_K$ and $i\in\mathbb Z$, we have by proposition \ref{oetetprop}
\begin{equation*}
\mathbb H^i_{et}(X,r^*\Omega_{/O_K}^{\bullet,an})=\mathbb H^i_{et}(X^{\mathcal O},r_*r^*\Omega_{/O_K}^{\bullet,an}), \; 
\mathbb H^i_{et}(X,r^*\Omega^{\bullet,an}_{/K,\log,\mathcal O})=\mathbb H^i_{et}(X^{\mathcal O},r_*r^*\Omega^{\bullet,an}_{/K,\log,\mathcal O}).
\end{equation*}
\end{itemize}
\end{defi}

Let $K$ be a field of characteristic zero which is complete for a $p$-adic norm.
Recall that $O_K\subset K$ denotes its ring of integers.
We will consider for $X\in\SmVar(K)$ and $X^{\mathcal O}\in\Sch^{int,sm}/O_K$ a smooth integral model of $X$, the canonical maps in $C(X^{et})$
\begin{eqnarray*}
S(X^{\mathcal O}/X):r^*\Omega^{\bullet}_{\hat X^{(p)},\log,\mathcal O}\to o_{X*}r^*\Omega^{\bullet,an}_{/K,\log,\mathcal O}, \; 
(U\to X)\mapsto (S(X^{\mathcal O}/X)(U\to X): \\ 
\varinjlim_{U\to V\to X \mbox{et}, V^{\mathcal O}\to X^{\mathcal O} \mbox{et}}
\Omega^{\bullet}_{\hat X^{(p)},\log,\mathcal O}(V^{\mathcal O})=\Omega^{\bullet,an}_{/K,\log,\mathcal O}(V^{\mathcal O})\to 
\varinjlim_{U\to U',\, U^{'\mathcal O} \mbox{int of }U'}\Omega^{\bullet,an}_{/K,\log,\mathcal O}(U^{'\mathcal O})), \\
S(X^{\mathcal O}/X):r^*\Omega_{\hat X^{\mathcal O}}^{\bullet}\to o_{X*}r^*\Omega_{/O_K}^{\bullet,an}, \; (U\to X)\mapsto \\
(S(X^{\mathcal O}/X)(U\to X):\varinjlim_{U\to V\to X \mbox{et}, V^{\mathcal O}\to X^{\mathcal O} \mbox{et}}
\Omega^{\bullet}_{\hat X^{\mathcal O}}(V^{\mathcal O})=\Omega_{/O_K}^{\bullet,an}(V^{\mathcal O})\to
\varinjlim_{U\to U',\, U^{'\mathcal O} \mbox{int of }U'}\Omega_{/O_K}^{\bullet,an}(U^{'\mathcal O}))
\end{eqnarray*}
where we recall $o_X:\SmVar(K)\to X^{et}$ is given by $o_X(U\to X)=U$. In particular 
\begin{equation*}
S(X^{\mathcal O}/X)\circ OL_{\hat X^{(p)}}=OL_{/K,an}\circ S(X^{\mathcal O}/X).
\end{equation*}

Let $U^{\mathcal O}\in\Sch^{int,sm}/O_K$ be an affine integral model of dimension $d$ such that the (co)tangent bundle $\Omega^1_{U^{\mathcal O}}\in\Shv_O(U^{\mathcal O})$ is trivial. 
In particular
\begin{equation*}
\Omega^1(U^{\mathcal O})=\oplus_{i=1}^dO(U^{\mathcal O})dx_i,  \; \Omega^1(U^{\mathcal O}/p^n)=\oplus_{i=1}^dO(U^{\mathcal O}/p^n)dx_i,
\Omega^1(\hat U^{(p)})=\oplus_{i=1}^dO(\hat U^{(p)})dx_i,
\end{equation*} 
Assume that moreover $\Pic(U^{\mathcal O}/p)=0$.
\begin{itemize}
\item[(i)]Consider a cartesian square in $\SmVar(O_K/p)$
\begin{equation*}
\xymatrix{V_1\times_{U^{\mathcal O}/p}V_2\ar[d]^{p_1}\ar[rr]^{p_2} & \, & \ar[d]^{r_2}V_2 \\
V_1\ar[rr]^{r_1} & \, & U^{\mathcal O}/p}
\end{equation*}
where $r_1:V_1\to U^{\mathcal O}/p$ and $r_2:V_2\to U^{\mathcal O}/p$ are etale, $V_1$ and $V_2$ are affine, 
and $r_1:V_1\to r_1(V_1)$ and $r_2:V_2\to r_2(V_2)$ are finite etale.
Then the cotangent bundles $\Omega^1_{V_1}\in\Shv_O(V_1)$, $\Omega^1_{V_2}\in\Shv_O(V_2)$, 
$\Omega^1_{V_1\times_{U^{\mathcal O}/p}V_2}\in\Shv_O(V_1\times_{U^{\mathcal O}/p}V_2)$ are trivial and we have in particular the splittings
\begin{eqnarray*}
\Omega^1(V_1)=\oplus_{i=1}^dO(V_1)dx_i, \; \Omega^1(V_2)=\oplus_{i=1}^dO(V_2)dx_i,\;
\Omega^1(V_1\times_{U^{\mathcal O}/p}V_2)=\oplus_{i=1}^dO(V_1\times_{U^{\mathcal O}/p}V_2)dx_i.
\end{eqnarray*}
We will consider for $e:V\to U^{\mathcal O}/p$ an etale morphism with $V\in\Var(O_K/p)$ affine,
\begin{eqnarray*}
\tilde\Omega^1_{(U^{\mathcal O}/p)^{et},\log}(V)=\sum_{f\in O(V)^*}\oplus_{i=1}^dO_K/p(\partial_{x_i}f/f)dx_i\subset\Omega^1(V), \\ 
\tilde\Omega^p_{(U^{\mathcal O}/p)^{et},\log}(V):=\wedge^p\tilde\Omega^1_{(U^{\mathcal O}/p)^{et},\log}(V)\subset\Omega^p(V), \, p\in\mathbb N.
\end{eqnarray*}
Then we take a (non canonical) splitting $((O_K)/p)^d=D\oplus H$ where $D$ is the diagonal and $H$ an hyperplane. It induces a splitting in $C((U^{\mathcal O}/p)^{et})$
\begin{equation*}
\tilde\Omega^{\bullet}_{(U^{\mathcal O}/p)^{et},\log}=(\Omega^{\bullet}_{(U^{\mathcal O}/p)^{et},\log}\otimes O_K/p)\oplus(\tilde\Omega^{\bullet}_{(U^{\mathcal O}/p)^{et},\log})^H
\end{equation*}
We also take a (non canonical) splitting of $O_K/p$-vector spaces
\begin{equation*}
\Omega^1(U^{\mathcal O}/p)=\tilde\Omega^1_{(U^{\mathcal O}/p)^{et},\log}(U^{\mathcal O}/p)\oplus NL(U^{\mathcal O}/p)
\end{equation*}
Take compactifications
\begin{equation*}
r_1:V_1\xrightarrow{j_1}W_1\xrightarrow{\bar r_1} U^{\mathcal O}/p, \; r_2:V_2\xrightarrow{j_2}W_2\xrightarrow{\bar r_2} U^{\mathcal O}/p
\end{equation*}
where $j_1,j_2$ are open embeddings and $\bar r_1,\bar r_2$ finite (and surjective) morphisms of degree $r_1$ and $r_2$ respectively. 
Then, since $\Pic(U^{\mathcal O}/p)=0$, all the irreducible components of $U^{\mathcal O}/p\backslash r_1(V_1)=V(f_1)\subset U^{\mathcal O}/p$ and of
$U^{\mathcal O}/p\backslash r_2(V_2)=V(f_2)\subset U^{\mathcal O}/p$ are given by a single equation, this gives 
\begin{equation*}
W_1\backslash V_1=V(f_1)\subset W_1, \, W_2\backslash V_2=V(f_2)\subset W_2, \,
W_1\times_{U^{\mathcal O}/p}W_2\backslash V_1\times_{U^{\mathcal O}/p}V_2=V(f_1f_2)\subset W_1\times_{U^{\mathcal O}/p}W_2, 
\end{equation*}
and $f_1=f_{1,1}\cdots f_{1,r}$, $f_{1,j}\in O(W_1)$ irreducible, $f_2=f_{2,1}\cdots f_{2,s}$, $f_{2,j}\in O(W_1)$ irreducible, and
\begin{eqnarray*}
O(W_1)=O_K/p[x_1,\cdots,x_d][k_1^{-1},\cdots,k_d^{-1}][e][e_1], \; O(W_2)=O_K/p[x_1,\cdots,x_d][k_1^{-1},\cdots,k_d^{-1}][e][e_2], \\ 
O(W_1\times_{U^{\mathcal O/p}}W_2)=O_K/p[x_1,\cdots,x_d][k_1^{-1},\cdots,k_d^{-1}][e][e_1][e_2]
\end{eqnarray*}
We choose (non canonical) splittings of $O_K/p$-vector spaces
\begin{eqnarray*}
O_K/p[x_1,\cdots,x_d][e][e_1][k_1^{-1},\cdots,k_d^{-1}]=f_1O_K/p[x_1,\cdots,x_d][e][e_1][k_1^{-1},\cdots,k_d^{-1}]\oplus T_{f_1}, \\
 T_{f_1}\supset\{\partial_{x_i}f_{1,j},1\leq i\leq d, 1\leq j\leq r\}, , \\
O_K/p[x_1,\cdots,x_d][e][e_2][k_1^{-1},\cdots,k_d^{-1}]=f_2O_K/p[x_1,\cdots,x_d][e][e_2][k_1^{-1},\cdots,k_d^{-1}]\oplus T_{f_2}, \\ 
T_{f_2}\supset\{\partial_{x_i}f_{2,j},1\leq i\leq d, 1\leq j\leq s\}.
\end{eqnarray*}
It induces the (non canonical) splitting of $O_K/p$-vector spaces
\begin{eqnarray*}
O_K/p[x_1,\cdots,x_d][e][e_1][e_2][k_1^{-1},\cdots,k_d^{-1}]=f_1f_2O_K/p[x_1,\cdots,x_d][e][e_1][e_2][k_1^{-1},\cdots,k_d^{-1}]\oplus T_{f_1f_2}, \\
T_{f_1f_2}:=T_{f_1}[e_2]+T_{f_2}[e_1], \; 
T_{f_1}[e_2]:=T_{f_1}+T_{f_1}e_2+\cdots+T_{f_1}e^{r_2}_2, \; T_{f_2}[e_1]:=T_{f_2}+T_{f_2}e_1+\cdots+T_{f_2}e^{r_1}_1.
\end{eqnarray*}
That is we have (non canonical) splittings of $O_K/p$-vector spaces
\begin{eqnarray*}
O(W_1)=f_1O(W_1)\oplus T_{f_1}, \; O(W_2)=f_2O(W_2)\oplus T_{f_2}, \\ 
O(W_1\times_{U^{\mathcal O}/p}W_2)=f_1f_2O(W_1\times_{U^{\mathcal O}/p}W_2)\oplus T_{f_1f_2}, T_{f_1f_2}:=T_{f_1}[e_2]+T_{f_2}[e_1].
\end{eqnarray*}
We also choose (non canonical) splittings of $O_K/p$-vector spaces
\begin{eqnarray*}
T_{f_1}=(\sum_{j=1}^r(O_K/p)(\pi_{j'\neq j}f_{1,j'})\partial_{x_i}f_{1,j})\oplus H^i_{f_1}, \, T_{f_2}=(\sum_{j=1}^s(O_K/p)(\pi_{j'\neq j}f_{2,j'})\partial_{x_i}f_{2,j}\oplus H^i_{f_2}, \\
T_{f_1f_2}=(\sum_{j=1}^t(O_K/p)(\pi_{j'\neq j}f_{1,j'})\partial_{x_i}(f_1f_2)_j)\oplus H^i_{f_1f_2}, \, 1\leq i\leq d, \\
a_{\bar p_1}(H^i_{f_1})\subset H^i_{f_1f_2}, \, a_{\bar p_2}(H^i_{f_2})\subset H^i_{f_1f_2}.
\end{eqnarray*}
Now, set
\begin{eqnarray*}
NL(V_1):=(\oplus_{i=1}^d(\oplus_{j\neq i}O(V_1)dx_j)\oplus(H^i_{f_1}/f_1\oplus(\oplus_{k\in\mathbb N,k\geq 2}T_{f_1}/f_1^k))dx_i)
\oplus\bar r_1^{*mod}NL(U^{\mathcal O}/p)\subset\Omega^1(V_1),  \\
NL(V_2):=(\oplus_{i=1}^d(\oplus_{j\neq i}O(V_2)dx_j)\oplus(H^i_{f_2}/f_2\oplus(\oplus_{k\in\mathbb N,k\geq 2}T_{f_2}/f_2^k))dx_i)
\oplus\bar r_2^{*mod} NL(U^{\mathcal O}/p)\subset\Omega^1(V_2), \\
NL(V_1\times_{U^{\mathcal O}/p}V_2):=
\oplus_{i=1}^d((\oplus_{j\neq i}O(V_1\times_{U^{\mathcal O}/p}V_2)dx_j)\oplus (H^i_{f_1f_2}/(f_1f_2)\oplus(\oplus_{k\in\mathbb N,k\geq 2}T_{f_1f_2}/(f_1f_2)^k))dx_i) \\
\oplus\bar r_{12}^{*mod}NL(U^{\mathcal O}/p)\subset\Omega^1(V_1\times_{U^{\mathcal O}/p}V_2).
\end{eqnarray*}
We then have
\begin{eqnarray*}
\Omega^1(V_1)=\tilde\Omega^1_{(U^{\mathcal O}/p)^{et},\log}(V_1)\oplus NL(V_1), \; \Omega^1(V_2)=\tilde\Omega^1_{(U^{\mathcal O}/p)^{et},\log}(V_2)\oplus NL(V_2) , \\
\Omega^1(V_1\times_{U^{\mathcal O}/p}V_2)=
\tilde\Omega^1_{(U^{\mathcal O}/p)^{et},\log}(V_1\times_{U^{\mathcal O}/p}V_2)\oplus NL(V_1\times_{U^{\mathcal O}/p}V_2),  \\
\Omega(p_1)(NL(V_1))\subset NL(V_1\times_{U^{\mathcal O}/p}V_2),  \; \Omega(p_2)(NL(V_2))\subset NL(V_1\times_{U^{\mathcal O}/p}V_2),
 \end{eqnarray*}
where $p_1:V_1\times_{U^{\mathcal O}/p}V_2\to V_1$ and $p_2:V_1\times_{U^{\mathcal O}/p}V_2\to V_2$ are the morphisms given by base change.
\item[(i)']Let $r=(r_i:V_i\to U^{\mathcal O}/p)_{i\in I}$ be an etale cover in $\Var(O_K/p)$ with $V_i,i\in I$ affine and $r_i:V_i\to r_i(V_i)$ finite etale. Denote 
\begin{equation*}
V_{\bullet}\in\Fun(\Delta(I),\Var(O_K/p)), \; V_{i_1,\cdots,i_l}:=V_{i_1}\times_{U^{\mathcal O}/p}\cdots\times_{U^{\mathcal O}/p}V_{i_l}
\end{equation*}
the associated Cech simplicial scheme. Take compactification for each $i\in I$
\begin{equation*}
r_i:V_i\xrightarrow{j_i}W_i\xrightarrow{\bar r_i} U^{\mathcal O}/p, \; 
\end{equation*}
where $j_i,i\in I$ are open embeddings and $\bar r_i,i\in I$ are finite (and surjective) morphisms of degree $r_i$. 
Then, since $\Pic(U^{\mathcal O}/p)=0$, the irreducible components of $U^{\mathcal O}/p\backslash r_i(V_i)=V(f_i)\subset U^{\mathcal O}/p$ 
are given by a single equation for each $i\in I$, this gives 
for each $i\in I$, $W_i\backslash V_i=V(f_i)\subset W_i$ and $O(W_i)=O_K/p[x_1,\cdots,x_d][k_1^{-1},\cdots,k_d^{-1}][e][e_i]$.
We choose (non canonical) splittings of $O_K/p$-vector spaces
\begin{eqnarray*}
O_K/p[x_1,\cdots,x_d][e][e_i][k_1^{-1},\cdots,k_d^{-1}]=f_iO_K/p[x_1,\cdots,x_d][e][e_i][k_1^{-1},\cdots,k_d^{-1}]\oplus T_{f_i}, \\ 
T_{f_i}[e_{J_i}]\supset\{\partial_{x_l}f_{i,j},1\leq l\leq d, 1\leq j\leq r\}, \, f_i=\pi_{j=1}^rf_{i,j}\in O(W_{J_i})
\end{eqnarray*}
where for each $i\in I$, $J_i\subset I$ is a finite set such that $f_{i,j}$ is irreducible in the normalization of $O(W_i)$ in the algebraic closure of $Frac(O(W_i))$. 
We also choose (non canonical) splittings of $O_K/p$-vector spaces which are compatible (note that $I$ is infinite by this is possible since $\Pic(U^{\mathcal O}/p)=0$
\begin{eqnarray*}
T_{f_i}=(\sum_{j=1}^r(O_K/p)(\pi_{j'\neq j}f_{i,j'})\partial_{x_i}f_{i,j})\oplus H^i_{f_i}.
\end{eqnarray*}
We get by (i) a subcomplex of $O_K/p$-vector spaces
\begin{equation*}
NL(V_{\bullet})\subset\Omega^1_{(U^{\mathcal O}/p)^{et}}(V_{\bullet})
\end{equation*}
satisfaying
\begin{eqnarray*}
\Omega^1_{(U^{\mathcal O}/p)^{et}}(V_{\bullet})=\tilde\Omega^1_{(U^{\mathcal O}/p)^{et},\log}(V_{\bullet})\oplus NL(V_{\bullet}).
\end{eqnarray*}
For each $p\in\mathbb N$ we get a (non canonical) splitting of complexes of $\mathbb Z/p$ modules
\begin{eqnarray}\label{splitO}
\Omega^p_{(U^{\mathcal O}/p)^{et}}(V_{\bullet})=\tilde\Omega^p_{U^{\mathcal O}/p,\log}(V_{\bullet})\oplus\Omega^p_{U^{\mathcal O}/p,nl}(V_{\bullet}), \\ 
\Omega^1_{(U^{\mathcal O}/p)^{et},nl}(V_{\bullet}):=NL(V_{\bullet}), \, 
\Omega^p_{(U^{\mathcal O}/p)^{et},nl}(V_{\bullet}):=NL(V_{\bullet})\wedge\Omega^{p-1}_{(U^{\mathcal O}/p)^{et}}(V_{\bullet}).
\end{eqnarray}
\item[(ii)]Consider a cartesian square in $\Sch^{int,sm}/O_K$
\begin{equation*}
\xymatrix{V^{\mathcal O}_1\times_{U^{\mathcal O}}V^{\mathcal O}_2\ar[d]^{p_1}\ar[rr]^{p_2} & \, & \ar[d]^{r_2}V^{\mathcal O}_2 \\
V^{\mathcal O}_1\ar[rr]^{r_1} & \, & U}
\end{equation*}
where $r'_1:V^{\mathcal O}_1\to U^{\mathcal O}$ and $r_2:V^{\mathcal O}_2\to U^{\mathcal O}$ are etale, $V^{\mathcal O}_1$ and $V^{\mathcal O}_2$ are affine, 
and $r'_1:V^{\mathcal O}_1\to r'_1(V^{\mathcal O}_1)$, $r'_2:V^{\mathcal O}_2\to r'_2(V^{\mathcal O}_2)$ are finite etale.
We denote $V_1:=V^{\mathcal O}_1\otimes_{O_K}K$ and $V_2:=V^{\mathcal O}_2\otimes_{O_K}K$.
Then the cotangent bundles $\Omega^1_{V_1}\in\Shv_O(V_1)$, $\Omega^1_{V_2}\in\Shv_O(V_2)$, 
$\Omega^1_{V_1\times_UV_2}\in\Shv_O(V_1\times_UV_2)$ are trivials and we have in particular the splittings
\begin{eqnarray*}
\Omega^1(\hat V_1)=\oplus_{i=1}^dO(\hat V_1)dx_i, \; \Omega^1(\hat V_2)=\oplus_{i=1}^dO(\hat V_2)dx_i,\;
\Omega^1(\hat V_1\times_{\hat U}\hat V_2)=\oplus_{i=1}^dO(\hat V_1\times_{\hat U}\hat V_2)dx_i.
\end{eqnarray*}
Then we take a (non canonical) splitting $K^d=D\oplus H$ where $D$ is the diagonal and $H$ an hyperplane. It induces splittings in $C(U^{et})$
\begin{equation*}
r^*\tilde\Omega^{\bullet}_{\hat U,\log,\mathcal O}=(r^*\Omega^{\bullet}_{\hat U,\log,\mathcal O}\otimes O_K)\oplus(r^*\tilde\Omega^{\bullet}_{\hat U,\log,\mathcal O})^H.
\end{equation*}
We also take a (non canonical) splitting of $O_K/p$-vector spaces
\begin{equation*}
\Omega^1(\hat U)=\tilde\Omega^1_{U^{\mathcal O},\log}(\hat U)\oplus NL(\hat U)
\end{equation*}
Take compactification
\begin{equation*}
r_1:V^{\mathcal O}_1\xrightarrow{j_1}W^{\mathcal O}_1\xrightarrow{\bar r_1} U^{\mathcal O}, \; r_2:V^{\mathcal O}_2\xrightarrow{j_2}W^{\mathcal O}_2\xrightarrow{\bar r_2} U^{\mathcal O}
\end{equation*}
where $j_1,j_2$ are open embeddings and $\bar r_1,\bar r_2$ finite (and surjective) morphisms, and $V^{\mathcal O}_1,V^{\mathcal O}_2\in\Sch^{int}/O_K$ are integral models.
Then, since $\Pic(U^{\mathcal O}/p)=0$, the irreducible components of $U^{\mathcal O}/p\backslash r_i(V_i)=V(f_i)\subset U^{\mathcal O}/p$ 
are given by a single, this gives
\begin{equation*}
W^{\mathcal O}_1\backslash V^{\mathcal O}_1=V(f_1)\subset W^{\mathcal O}_1, \, W^{\mathcal O}_2\backslash V^{\mathcal O}_2=V(f_2)\subset W^{\mathcal O}_2, \,
W^{\mathcal O}_1\times_{U^{\mathcal O}}W^{\mathcal O}_2\backslash V^{\mathcal O}_1\times_{U^{\mathcal O}}V^{\mathcal O}_2=
V(f_1f_2)\subset W^{\mathcal O}_1\times_{U^{\mathcal O}}W^{\mathcal O}_2, 
\end{equation*}
are given by a single equation, and $f_1=f_{1,1}\cdots f_{1,r}$, $f_{1,j}\in O(W^{\mathcal O}_1/p)$ irreducible, 
$f_2=f_{2,1}\cdots f_{2,s}$, $f_{2,j}\in O(W^{\mathcal O}_1/p)$ irreducible, and
We have then
\begin{eqnarray*}
O(\hat W_1)=K\{x_1,\cdots,x_d\}[e][e_1][k_1^{-1},\cdots,k_d^{-1}], \; O(W_2)=K\{x_1,\cdots,x_d\}[e][e_2][k_1^{-1},\cdots,k_d^{-1}], \\ 
O(\hat W_1\times_{\hat U}\hat W_2)=K\{x_1,\cdots,x_d\}[e][e_1][e_2][k_1^{-1},\cdots,k_d^{-1}]
\end{eqnarray*}
We choose (non canonical) splittings of $K$-vector spaces
\begin{eqnarray*}
K\{x_1,\cdots,x_d\}[e][e_1][k_1^{-1},\cdots,k_d^{-1}]=f_1K\{x_1,\cdots,x_d\}[e][e_1]\oplus T_{f_1}, \\ T_{f_1}\supset\{\partial_{x_i}f_{1,j},1\leq i\leq d, 1\leq j\leq r\}, , \\
K\{x_1,\cdots,x_d\}[e][e_2][k_1^{-1},\cdots,k_d^{-1}]=f_2K\{x_1,\cdots,x_d\}[e][e_1], \\ T_{f_2}\supset\{\partial_{x_i}f_{2,j},1\leq i\leq d, 1\leq j\leq s\}.
\end{eqnarray*}
It induces the (non canonical) splitting of $K$-vector spaces
\begin{eqnarray*}
K\{x_1,\cdots,x_d\}[e][e_1][e_2][k_1^{-1},\cdots,k_d^{-1}]=f_1f_2K\{x_1,\cdots,x_d\}[e,e_1,e_2][k_1^{-1},\cdots,k_d^{-1}]\oplus T_{f_1f_2}, \\
T_{f_1f_2}:=T_{f_1}[e_2]+T_{f_2}[e_1].
\end{eqnarray*}
That is we have, (non canonical) splittings of $K$-vector spaces
\begin{eqnarray*}
O(\hat W_1)=f_1O(W_1)\oplus T_{f_1}, \; O(\hat W_2)=f_2O(\hat W_2)\oplus T_{f_2}, \\ 
O(\hat W_1\times_{\hat U}\hat W_2)=f_1f_2O(W_1\times_{\hat U}W_2)\oplus T_{f_1f_2}, \, T_{f_1f_2}:=T_{f_1}[e_2]+T_{f_2}[e_1].
\end{eqnarray*}
We also choose (non canonical) splittings of $K$-vector spaces
\begin{eqnarray*}
T_{f_1}=(\sum_{j=1}^rK(\pi_{j'\neq j}f_{1,j'})\partial_{x_i}f_{1,j}\oplus H^i_{f_1}, \, T_{f_2}=(\sum_{j=1}^sK(\pi_{j'\neq j}f_{2,j'})\partial_{x_i}f_{2,j}\oplus H^i_{f_2}, \\
T_{f_1f_2}=(\sum_{j=1}^tK(\pi_{j'\neq j}f_{1,j'})\partial_{x_i}(f_1f_2)_j\oplus H^i_{f_1f_2}, \, 1\leq i\leq d, \\
a_{\bar p_1}(H^i_{f_1})\subset H^i_{f_1f_2}, \, a_{\bar p_2}(H^i_{f_2})\subset H^i_{f_1f_2}.
\end{eqnarray*}
Now, set
\begin{eqnarray*}
NL(V_1):=(\oplus_{i=1}^d((\oplus_{j\neq i}O(\hat V_1)dx_j)\oplus (H^i_{f_1}/f\oplus(\oplus_{k\in\mathbb N,k\geq 2}T_{f_1}/f_1^k)))dx_i)\oplus
\bar r_1^{*mod}NL(\hat U)\subset\Omega^1(\hat V_1),  \\
NL(V_2):=(\oplus_{i=1}^d((\oplus_{j\neq i}O(\hat V_2)dx_j)\oplus (H^i_{f_2}/f\oplus(\oplus_{k\in\mathbb N,k\geq 2}T_{f_2}/f_2^k)))dx_i)\oplus
\bar r_2^{*mod}NL(\hat U)\subset\Omega^1(\hat V_2), \\
NL(V_1\times_UV_2):=(\oplus_{i=1}^d((\oplus_{j\neq i}O(V_1\times_{U^{\mathcal O}/p}V_2)dx_j)\oplus \\
(H^i_{f_1f_2}/(f_1f_2)\oplus(\oplus_{k\in\mathbb N,k\geq 2}T_{f_1f_2}/(f_1f_2)^k)))dx_i)\oplus\bar r_{12}^{*mod}NL_0(\hat U) \subset\Omega^1(\hat V_1\times_{\hat U}\hat V_2).
\end{eqnarray*}
We then have
\begin{eqnarray*}
r^*\Omega^1_{\hat U^{\mathcal O,et}}(V_1)=r^*\tilde\Omega^1_{\hat U,\log,\mathcal O}(V_1)\oplus NL(V_1), \;
r^*\Omega^1_{\hat U^{\mathcal O,et}}(V_2)=r^*\tilde\Omega^1_{\hat U,\log,\mathcal O}(V_2)\oplus NL(V_2) , \\
r^*\Omega^1_{\hat U^{\mathcal O,et}}(V_1\times_UV_2)=r^*\tilde\Omega^1_{\hat U,\log,\mathcal O}(V_1\times_{U}V_2)\oplus NL(V_1\times_UV_2),  \\
\Omega(p_1)(NL_a(V_1))\subset NL(V_1\times_UV_2),  \; \Omega(p_2)(NL(V_2))\subset NL(V_1\times_UV_2),
\end{eqnarray*}
where $p_1:V_1\times_{U}V_2\to V_1$ and $p_2:V_1\times_{U}V_2\to V_2$ are the morphisms given by base change.
\item[(ii)']Let $r=(r_i:V^{\mathcal O}_i\to U^{\mathcal O})_{i\in I}$ be an etale cover in $\Sch^{int,sm}/O_K$, 
with $V^{\mathcal O}_i$ affine and $r_i:V^{\mathcal O}_i\to r_i(V^{\mathcal O}_i)$ finite etale for each $i\in I$. Denote 
\begin{equation*}
V^{\mathcal O}_{\bullet}\in\Fun(\Delta(I),\Sch^{int,sm}/O_K), \; 
V^{\mathcal O}_{i_1,\cdots,i_l}:=V^{\mathcal O}_{i_1}\times_{U^{\mathcal O}}\cdots\times_{U^{\mathcal O}}V^{\mathcal O}_{i_l}
\end{equation*}
and $V_{\bullet}:=V^{\mathcal O}_{\bullet}\otimes_{O_K}K\in\Fun(\Delta(I),\SmVar(K))$ the associated Cech simplicial schemes. 
Take compactification for each $i\in I$,
\begin{equation*}
r_1:V^{\mathcal O}_i\xrightarrow{j_i}W^{\mathcal O}_i\xrightarrow{\bar r_i} U^{\mathcal O},
\end{equation*}
where $j_i$ are open embeddings and $\bar r_i$ finite (and surjective) morphisms.
Since $\Pic(U^{\mathcal O}/p)=0$, for each $i\in I$, the irreducible components of $U^{\mathcal O}/p\backslash r_i(V_i)=V(f_i)\subset U^{\mathcal O}/p$ 
are given by a single equation, this gives $W^{\mathcal O}_i\backslash V^{\mathcal O}_i=V(f_i)\subset W^{\mathcal O}_i$ and 
\begin{equation*}
O(\hat W_i)=K\{x_1,\cdots,x_d\}[e,e_i][k_1^{-1},\cdots,k_d^{-1}].
\end{equation*}
We choose (non canonical) splittings of $O_K/p$-vector spaces
\begin{eqnarray*}
K\{x_1,\cdots,x_d\}[e,e_i][k_1^{-1},\cdots,k_d^{-1}]=f_iK\{x_1,\cdots,x_d\}[e,e_i][k_1^{-1},\cdots,k_d^{-1}]\oplus T_{f_i}, \\ 
T_{f_i}[e_{J_i}]\supset\{\partial_{x_l}f_{i,j},1\leq l\leq d, 1\leq j\leq r\}, 
\end{eqnarray*}
We also choose (non canonical) compatible splittings of $K$-vector spaces
\begin{eqnarray*}
T_{f_i}=(\sum_{j=1}^rK(\pi_{j'\neq j}f_{i,j'})\partial_{x_i}f_{i,j}\oplus H^i_{f_i}.
\end{eqnarray*}
We get by (i) subcomplexes of $K$ vector spaces
\begin{equation*}
NL(V_{\bullet})\subset\Omega^1_{\hat U^{et}}(V_{\bullet})=r^*\Omega^{1,an}_{/O_K}(V_{\bullet})
\end{equation*}
satisfaying
\begin{eqnarray*}
r^*\Omega^1_{\hat U^{et}}(V_{\bullet})=r^*\tilde\Omega^1_{\hat U,\log,\mathcal O}(V_{\bullet})\oplus NL(V_{\bullet}).
\end{eqnarray*}
For each $p\in\mathbb N$ we get (non canonical) splittings of complexes of $\mathbb Z_p$ modules
\begin{eqnarray}\label{splitO1}
r^*\Omega^p_{\hat U^{\mathcal O,et}}(V_{\bullet})=r^*\tilde\Omega^p_{\hat U,\log,\mathcal O}(V_{\bullet})\oplus\Omega^p_{\hat U^{et},nl}(V_{\bullet}), \\ 
\Omega^1_{\hat U^{et},nl}(V_{\bullet}):=NL(V_{\bullet}), \, 
\Omega^p_{\hat U^{et},nl}(V_{\bullet}):=NL(V_{\bullet})\wedge r^*\Omega^{p-1}_{\hat U^{\mathcal O,et}}(V_{\bullet}).
\end{eqnarray}
\end{itemize}

\begin{lem}\label{UUO}
Let $K$ be a field of characteristic zero which is complete for a $p$-adic norm. Recall that $O_K\subset K$ denotes its ring of integers.
Let $X\in\SmVar(K)$ with good reduction.
Let $X^{\mathcal O}\in\Sch^{int,sm}/O_K$ be a smooth integral model of $X$, 
in particular $X^{\mathcal O}\otimes_{O_K}K=X$ and $X^{\mathcal O}$ is smooth over $O_K$.
Let $x\in X^{\mathcal O}$.
There exists a (non empty) open subset $U^{\mathcal O}\subset X^{\mathcal O}$ which is an integral model 
(i.e. $U^{\mathcal O}\in\Sch^{int,sm}/O_K$) such that $x\in U^{\mathcal O}$ and such that there exists an
etale map $e:U^{\mathcal O}\to\mathbb G_m^{d_U}\subset\mathbb A_{O_K}^{d_U}$ 
with $e:U^{\mathcal O}\to e(U^{\mathcal O})$ finite etale and $\Pic(U^{\mathcal O}/p)=0$.
\end{lem}

\begin{proof}
By an integral version (over $O_K$) of Noether's normalization lemma or by taking a linear projection,
there exists $V^{\mathcal O}\subset X^{\mathcal O}$ an affine open subset which is an integral model,
such that there exist a finite surjective map $e:V^{\mathcal O}\to\mathbb A_{O_K}^{d_U}$, $x\in V^{\mathcal O}\backslash R$. 
By removing the ramification locus $R$, 
there exists an open subset $U^{\mathcal O}\subset V^{\mathcal O}$ which is an integral model such that $x\in U^{\mathcal O}$ 
and such that there exists an etale map $e:U^{\mathcal O}\to\mathbb G_m^{d_U}\subset\mathbb A_{O_K}^{d_U}$
with $e:U^{\mathcal O}\to e(U^{\mathcal O})$ finite etale.
Moreover since $O_K/p$ is a finite field, $\Pic^o(Y)$ is finite, hence $\Pic(Y)$ is finitely generated for any $Y\in\SmVar(O_K/p)$ 
since $\Pic^s(Y)(O_K/p)$ is a finite group where $\Pic^s(Y)$ is the Picard scheme. 
Hence, up to shrinking $U^{\mathcal O}$ by removing a finite number of closed subsets, we may assume that $\Pic(U^{\mathcal O}/p)=0$.
\end{proof}

We have the following result :

\begin{prop}\label{UXpet}
Let $K$ be a field of characteristic zero which is complete for a $p$-adic norm. Recall that $O_K\subset K$ denotes its ring of integers.
\begin{itemize}
\item[(i0)]Let $U\in\SmVar(K)$ with good reduction 
and $U^{\mathcal O}\in\Sch^{int,sm}/O_K$ be a smooth integral model of $U$, in particular $U^{\mathcal O}\otimes_{O_K}K=U$,
such that there exists an etale map $e:U^{\mathcal O}\to\mathbb G_m^{d_U}\subset\mathbb A_{O_K}^{d_U}$ 
with $e:U^{\mathcal O}\to e(U^{\mathcal O})$ finite etale and such that $\Pic(U^{\mathcal O}/p)=0$.
Then, for each $p,q\in\mathbb Z$, $q\neq 0$, $p\neq 0$, $H_{et}^q(U,\Omega^p_{\hat U^{(p)},\log,\mathcal O})=0$.
\item[(i)]Let $U\in\SmVar(K)$ with good reduction
and $U^{\mathcal O}\in\Sch^{int,sm}/O_K$ be a smooth integral model of $U$, in particular $U^{\mathcal O}\otimes_{O_K}K=U$,
such that there exists an etale map $e:U^{\mathcal O}\to\mathbb G_m^{d_U}\subset\mathbb A_{O_K}^{d_U}$ 
with $e:U^{\mathcal O}\to e(U^{\mathcal O})$ finite etale and such that $\Pic(U^{\mathcal O}/p)=0$.
Then, for each $p,q\in\mathbb Z$, $q\neq 0$, $p\neq 0$, $H_{pet}^q(U,\Omega^p_{\hat U^{(p)},\log,\mathcal O})=0$.
\item[(ii)]Let $X\in\SmVar(K)$ with good reduction.
Let $X^{\mathcal O}\in\Sch^{int,sm}/O_K$ be a smooth integral model of $X$, in particular
$X^{\mathcal O}\otimes_{O_K}K=X$ and $X^{\mathcal O}$ is smooth over $O_K$. Then, for each $p,q\in\mathbb Z$,
\begin{eqnarray*}
T(E,\varprojlim)(X):H^q_{et}(X,\Omega^p_{\hat X^{\mathcal O,(p)},\log,\mathcal O})\xrightarrow{\sim}H^q_{pet}(X,\Omega^p_{\hat X^{\mathcal O,(p)},\log,\mathcal O})
\end{eqnarray*}
is an isomorphism (ii1) and 
\begin{eqnarray*}
S(X^{\mathcal O}/X):H^q_{et}(X,\Omega^p_{\hat X^{\mathcal O,(p)},\log,\mathcal O})\hookrightarrow H^q_{et}(X,r^*\Omega^{p,an}_{/K,\log,\mathcal O})
\end{eqnarray*}
is injective (ii2). 
\item[(ii)']Let $X\in\SmVar(K)$ with good reduction.
Let $X^{\mathcal O}\in\Sch^{int,sm}/O_K$ be a smooth integral model of $X$, in particular
$X^{\mathcal O}\otimes_{O_K}K=X$ and $X^{\mathcal O}$ is smooth over $O_K$. Let $Z^{\mathcal O}\subset X^{\mathcal O}$ be a closed subset such that
all irreducible components of $Z^{\mathcal O}$ are surjective over $O_K$. Denote $Z:=Z^{\mathcal O}\otimes_{O_K}K$. Then, for each $p,q\in\mathbb Z$,
\begin{eqnarray*}
T(E,\varprojlim)(X):H^q_{et,Z}(X,\Omega^p_{\hat X^{\mathcal O,(p)},\log,\mathcal O})\xrightarrow{\sim}H^q_{pet,Z}(X,\Omega^p_{\hat X^{\mathcal O,(p)},\log,\mathcal O})
\end{eqnarray*}
is an isomorphism (ii1)' and 
\begin{eqnarray*}
S(X^{\mathcal O}/X):H^q_{et,Z}(X,\Omega^p_{\hat X^{\mathcal O,(p)},\log,\mathcal O})\hookrightarrow H^q_{et,Z}(X,r^*\Omega^{p,an}_{/K,\log,\mathcal O})
\end{eqnarray*}
is injective (ii2)'.
\end{itemize}
\end{prop}

\begin{proof}
\noindent(i0): 
By proposition \ref{oetetprop}, we have
\begin{equation*}
H^q_{et}(U,r^*\Omega^p_{\hat U,\log,\mathcal O})=H^q_{et}(U^{\mathcal O,et},\Omega^p_{\hat U,\log,\mathcal O})=H^q_{et}(U^{\mathcal O}/p,\Omega^p_{\hat U,\log,\mathcal O}). 
\end{equation*}
Hence it is enough to show that $H^q_{et}(U^{\mathcal O}/p,\Omega^p_{\hat U,\log,\mathcal O})=0$ for $q\neq 0$.
Recall that for a variety $X\in\Var(L)$ over a field $L$, and $F\in\PSh(X^{et})$, we have, for $q\in\mathbb Z$,
\begin{eqnarray*}
H_{et}^q(X,F)=\varinjlim_{(r_i:X_i\to X)_{i\in I}}H^qF(X_{\bullet}), \\
(r_i:X_i\to X)_{i\in I} \, \mbox{s.et.cov}, \; X_{\bullet}\in\Fun(\Delta(I),\Var(L)), \; X_{i_1,\cdots,i_l}:=X_{i_1}\times_{X}\cdots\times_{X}X_{i_l}
\end{eqnarray*}
where $s.et;cov$ means standard etale vover (i.e $r_i$ are etale, $X_i$ are affine and $r_i:X_i\to r_i(X_i)$ finite etale) 
that is Chech etale cohomology coincide with etale cohomology.
Since $U^{\mathcal O}\in\Sch^{int}/O_K$ is affine, 
we have for $(r_i:V^{\mathcal O}_i\to U^{\mathcal O})_{i\in I}$ a standard etale cover in $\Sch^{int}/O_K$, $H^q\Omega^p_{\hat U^{et}}(V_{\bullet})=0$, 
since $\Omega^p_{\hat U^{et}}\in\PSh_O(U^{et})$ a (quasi)coherent $O(\hat U^{et})$-module.
Let $(r_i:V^{\mathcal O}_i\to U^{\mathcal O})_{i\in I}$ be a standard etale cover in $\Sch^{int}/O_K$. Denote  
\begin{equation*}
V^{\mathcal O}_{\bullet}\in\Fun(\Delta(I),\Sch^{int,sm}/O_K), \; 
V^{\mathcal O}_{i_1,\cdots,i_l}:=V^{\mathcal O}_{i_1}\times_{U^{\mathcal O}}\cdots\times_{U^{\mathcal O}}V^{\mathcal O}_{i_l}
\end{equation*}
and $V_{\bullet}:=V^{\mathcal O}_{\bullet}\otimes_{O_K}K\in\Fun(\Delta(I),\SmVar(K))$ the associated Cech simplicial schemes.
Since the cotangent bundle $\Omega^1_{U^{\mathcal O}}\in\Shv_O(U^{\mathcal O})$ of $U^{\mathcal O}$ is trivial 
as $U^{\mathcal O}$ admits an etale map to $\mathbb A_{O_K}^{d_U}$ and since $\Pic(U^{\mathcal O}/p)=0$, 
we have the (non canonical) splittings of $O_K$ modules (\ref{splitO1}) 
\begin{eqnarray*}
r^*\Omega^p_{\hat U^{\mathcal O,et}}(V_{\bullet})=r^*\tilde\Omega^p_{\hat U,\log,\mathcal O}(V_{\bullet})\oplus\Omega^p_{\hat U^{et},nl}(V_{\bullet}), \; 
r^*\tilde\Omega^p_{\hat U,\log,\mathcal O}(V_{\bullet})=(r^*\Omega^p_{\hat U,\log,\mathcal O}\otimes O_K)(V_{\bullet})\oplus(r^*\tilde\Omega^p_{\hat U,\log,\mathcal O})(V_{\bullet})^H.
\end{eqnarray*}
Hence for each $p,q\in\mathbb Z$, we have
\begin{eqnarray*}
H^qr^*\Omega^p_{\hat U^{\mathcal O,et}}(V_{\bullet})=H^qr^*\tilde\Omega^p_{\hat U,\log,\mathcal O}(V_{\bullet})\oplus H^q\Omega^p_{\hat U^{et},nl}(V_{\bullet}), \\ 
H^qr^*\tilde\Omega^p_{\hat U,\log,\mathcal O}(V_{\bullet})=H^q(\Omega^p_{\hat U,\log,\mathcal O}\otimes O_K)(V_{\bullet})\oplus H^q(r^*\tilde\Omega^p_{\hat U,\log,\mathcal O})(V_{\bullet})^H
\end{eqnarray*}
Hence $H^q_{et}(U,\Omega^p_{\hat U,\log,\mathcal O})=H^q\Omega^p_{\hat U,\log,\mathcal O}(V_{\bullet})=0$.

\noindent(i):By a standard result of Bhatt and Scholze (proposition 5.6.2 of \cite{BSch}), 
we have for $X\in\Var(K)$ and $X^{\mathcal O}\in\Sch^{int}/O_K$ and integral model of $X$,
\begin{equation*}
\varprojlim_{n\in\mathbb N}H^q_{et}(X,r^*\Omega^p_{X^{\mathcal O,et}/p^n,\log})=H^q_{pet}(X,\Omega^p_{\hat X^{\mathcal O,(p)},\log})
\end{equation*}
since for each $m>n$, $\Omega^p_{X^{\mathcal O,et}/p^m,\log}\to\Omega^p_{X^{\mathcal O,et}/p^n,\log}$
is surjective locally for the etale topology (take a finite cover of $X$ by w-contractile schemes and use Mitag-Lefter).
Hence its suffice to show that $H^q_{et}(U,r^*\Omega^p_{U^{\mathcal O,et}/p^n,\log})=0$ for $q\neq 0$ for each $n\in\mathbb N$.
By induction on $n\in\mathbb N$, using the exact sequences
\begin{equation*}
0\to r^*\Omega^p_{U^{\mathcal O,et}/p^{n-1},\log}\to r^*\Omega^p_{U^{\mathcal O,et}/p^n,\log}\to r^*\Omega^p_{U^{\mathcal O,et}/p,\log}\to 0
\end{equation*}
it is enough to show that $H^q_{et}(U,r^*\Omega^p_{U^{\mathcal O,et}/p,\log})=0$ for $q\neq 0$. By proposition \ref{oetetprop}, we have
\begin{equation*}
H^q_{et}(U,r^*\Omega^p_{U^{\mathcal O,et}/p,\log})=H^q_{et}(U^{\mathcal O,et},\Omega^p_{U^{\mathcal O,et}/p,\log})=H^q_{et}(U^{\mathcal O}/p,\Omega^p_{(U^{\mathcal O}/p)^{et},\log})
\end{equation*}
it is enough to show that $H^q_{et}(U^{\mathcal O}/p,\Omega^p_{(U^{\mathcal O}/p)^{et},\log})=0$ for $q\neq 0$.
On the other hand, for variety $X\in\Var(L)$ over a field $L$, and $F\in\PSh(X^{et})$, we have, for $q\in\mathbb Z$,
\begin{eqnarray*}
H_{et}^q(X,F)=\varinjlim_{(r_i:X_i\to X)_{i\in I}}H^qF(X_{\bullet}), \\
(r_i:X_i\to X)_{i\in I} \, \mbox{s.et.cov}, \; X_{\bullet}\in\Fun(\Delta(I),\Var(L)), \; X_{i_1,\cdots,i_l}:=X_{i_1}\times_{X}\cdots\times_{X}X_{i_l}
\end{eqnarray*}
where $s.et;cov$ means standard etale vover (i.e $r_i$ are etale, $X_i$ are affine, and $r_i:X_i\to r_i(X_i)$ finite etale) 
that is Chech etale cohomology coincide with etale cohomology.
Since $U^{\mathcal O}/p\in\Var(O_K/p)$ is affine, we have for $(r_i:V_i\to U^{\mathcal O}/p)_{i\in I}$ a standard etale cover in $\Var(O_K/p)$, 
$H^q\Omega^p_{(U^{\mathcal O}/p)^{et}}(V_{\bullet})=0$, 
since $\Omega^p_{(U^{\mathcal O}/p)^{et}}\in\PSh_O(U^{\mathcal O,et}/p)$ a (quasi)coherent $O((U^{\mathcal O}/p)^{et})$-module. 
Let $(r_i:V_i\to U^{\mathcal O}/p)_{i\in I}$ be a standard etale cover in $\Var(O_K/p)$. Denote 
\begin{equation*}
V_{\bullet}\in\Fun(\Delta(I),\Var(O_K/p)), \; V_{i_1,\cdots,i_l}:=V_{i_1}\times_{U^{\mathcal O}/p}\cdots\times_{U^{\mathcal O}/p}V_{i_l}
\end{equation*}
the associated Chech simplicial scheme. Since the cotangent bundle $\Omega^1_{U^{\mathcal O}}\in\Shv_O(U^{\mathcal O})$ of $U^{\mathcal O}$ is trivial 
as $U^{\mathcal O}$ admits an etale map to $\mathbb A_{O_K}^{d_U}$ and since $\Pic(U^{\mathcal O}/p)=0$, we have (non canonical) splittings (\ref{splitO})
\begin{eqnarray*}
\Omega^p_{(U^{\mathcal O}/p)^{et}}(V_{\bullet})=\tilde\Omega^p_{(U^{\mathcal O}/p)^{et},\log}(V_{\bullet})\oplus\Omega^p_{(U^{\mathcal O}/p)^{et},nl}(V_{\bullet}), \\
\tilde\Omega^p_{(U^{\mathcal O}/p)^{et},\log}(V_{\bullet})=
(\Omega^p_{(U^{\mathcal O}/p)^{et},\log}(V_{\bullet})\otimes O_K/p)\oplus\tilde\Omega^p_{(U^{\mathcal O}/p)^{et},\log}(V_{\bullet})^H.
\end{eqnarray*}
Hence, for each $p,q\in\mathbb Z$, we have
\begin{eqnarray*}
H^q\Omega^p_{(U^{\mathcal O}/p)^{et}}(V_{\bullet})=H^q\tilde\Omega^p_{(U^{\mathcal O}/p)^{et},\log}(V_{\bullet})\oplus H^q\Omega^p_{(U^{\mathcal O}/p)^{et},nl}(V_{\bullet}), \\
H^q\tilde\Omega^p_{(U^{\mathcal O}/p)^{et},\log}(V_{\bullet})=
(H^q\Omega^p_{(U^{\mathcal O}/p)^{et},\log}(V_{\bullet})\otimes O_K/p)\oplus H^q\tilde\Omega^p_{(U^{\mathcal O}/p)^{et},\log}(V_{\bullet})^H.
\end{eqnarray*}
Hence, $H^q_{et}(U^{\mathcal O}/p,\Omega^p_{U^{\mathcal O}/p,\log})=H^q\Omega^p_{U^{\mathcal O}/p,\log}(V_{\bullet})=0$.

\noindent(ii):Take (see lemma \ref{UUO})
an open affine cover $X=\cup_{1\leq i\leq r}X_i$ of $X$ such that for each $1\leq i\leq r$
such that there exists an etale map $e_i:X^{\mathcal O}_i\to G_m^{d_{X_i}}\subset\mathbb A_{O_K}^{d_{X_i}}$ 
with $e_i:X^{\mathcal O}_i\to e_i(X^{\mathcal O}_i)$ finite etale and such that $\Pic(X_i^{\mathcal O}/p)=0$. 
Denote, for each $1\leq i\leq r$, $j_i:X_i\hookrightarrow X$ the open embedding.
Consider then the commutative diagrams of abelian groups
\begin{eqnarray*}
\xymatrix{\mathbb H_{et}^q(X_{\bullet},\Omega^p_{\hat X^{(p)},\log,\mathcal O})\ar[rrr]^{H^{p+q}T(E,\varprojlim)(X_{\bullet})} & \, & \, & 
\mathbb H_{pet}^q(X_{\bullet},\Omega^p_{\hat X^{(p)},\log,\mathcal O}) \\
H_{et}^q(X,\Omega^p_{\hat X^{(p)},\log,\mathcal O})\ar[rrr]^{H^{p+q}T(E,\varprojlim)(X)}\ar[u]^{\oplus_{1\leq i\leq r}E_{et}\Omega(j_i)} & \, & \, &
H_{pet}^q(X,\Omega^p_{\hat X^{(p)},\log,\mathcal O})\ar[u]^{\oplus_{1\leq i\leq r}E_{pet}\Omega(j_i)}},
\end{eqnarray*}
where the vertical maps are isomorphisms since the Zariski topology is finer then the etale topology.
By (i) the upper maps are isomorphisms (using the spectral sequence associated to the trivial filtration of complexes).
Hence the lower maps are also isomorphisms. This proves (ii1). 
By proposition \ref{oetetprop}
\begin{eqnarray*}
S(X^{\mathcal O}/X):H^q_{et}(X^{\mathcal O},\Omega^p_{\hat X^{\mathcal O,(p)},\log,\mathcal O})\to H^q_{et}(X^{\mathcal O},r_*r^*\Omega^{p,an}_{/K,\log,\mathcal O}).
\end{eqnarray*}
Since 
\begin{eqnarray*}
S(X^{\mathcal O}/X):H_{et}^q(X^{\mathcal O},\Omega^p_{\hat X^{\mathcal O,(p)},\log,\mathcal O})=
\varinjlim_{(r_i:X^{\mathcal O}_i\to X^{\mathcal O})_{i\in I}}H^q\Omega^p_{\hat X^{\mathcal O,(p)},\log,\mathcal O}(X^{\mathcal O}_{\bullet})\to , \\
H_{et}^q(X^{\mathcal O},r_*r^*\Omega^{p,an}_{/K,\log,\mathcal O})=
\varinjlim_{(r_i:X^{\mathcal O}\to X^{\mathcal O})_{i\in I}}H^qr_*r^*\Omega^{p,an}_{/K,\log,\mathcal O}(X^{\mathcal O}_{\bullet}) \\
(r_i:X^{\mathcal O}_i\to X^{\mathcal O})_{i\in I} \, \mbox{s.et.cov}, \; X^{\mathcal O}_{\bullet}\in\Fun(\Delta(I),\Sch^{int,sm}/O_K), \; 
X^{\mathcal O}_{i_1,\cdots,i_l}:=X^{\mathcal O}_{i_1}\times_{X^{\mathcal O}}\cdots\times_{X^{\mathcal O}}X^{\mathcal O}_{i_l}
\end{eqnarray*}
and since inductive colimit commutes with finite limit, to prove (ii2) it suffice to show that for each $(r_i:X^{\mathcal O}_i\to X^{\mathcal O})_{i\in I}$ a standard etale cover
\begin{eqnarray*}
S(X^{\mathcal O}/X):H^q\Omega^p_{\hat X^{\mathcal O,(p)},\log,\mathcal O}(X^{\mathcal O}_{\bullet})\to H^qr_*r^*\Omega^{p,an}_{/K,\log,\mathcal O}(X^{\mathcal O}_{\bullet})
\end{eqnarray*}
is injective. But this follows from the fact that for $V^{\mathcal O}\in\Sch^{int,sm}/O_K$, 
\begin{equation*}
r_*r^*\Omega^{p,an}_{/K,\log,\mathcal O}(V^{\mathcal O})=\varinjlim_{V^{\mathcal O(l)}\in IntSm(V)}\Omega^{p,an}_{/K,\log,\mathcal O}(V^{\mathcal O(l)})
\end{equation*}
where $V:=V^{\mathcal O}\otimes_{O_K}K$ and $IntSm(V)$ is the set of smooth integral models of $V$.
Indeed for 
\begin{equation*}
\alpha=\partial_{\bullet}(\beta)\in\oplus_{J\subset I, card J=m}\Omega^{p,an}_{/K,\log,\mathcal O}(V_J^{\mathcal O}) \; \mbox{and} \; 
\beta=(\beta_l)\in\oplus_{K\subset I, card K=m-1}r_*r^*\Omega^{p,an}_{/K,\log,\mathcal O}(V_K^{\mathcal O}), 
\end{equation*}
either there is no map between $V_J^{\mathcal O}$ and $V_K^{\mathcal O(l)}$ extending $V_J\to V_K$ in codimension one in the special fiber for $J\subset K$
and then $\partial_{J/K}(\beta_l)=0$, 
or there is a map between $V_J^{\mathcal O}$ and $V_K^{\mathcal O(l)}$ extending $V_J\to V_K$ in codimension one in the special fiber
and then after taking compactification and normalization 
there is a map $\rho_l$ between $V_J^{\mathcal O}$ and $V_K^{\mathcal O(l)}$ extending $V_J\to V_K$ in codimension two in the special fiber 
and then we may replace $\beta_l$ by $\rho_l^*\beta_l:=\omega(\rho_l)(\beta_l)$.

\noindent(ii)':(ii1)' follows from (ii1) applied to $X$ and $X\backslash Z$ by the distinguish triangle in $Ho(C(\SmVar(K)))$ 
\begin{equation}\label{disTXZ}
\mathbb Z(X\backslash Z)\xrightarrow{\mathbb Z(j)}\mathbb Z(X)\to\mathbb Z(X,X\backslash Z)\to\mathbb Z(X\backslash Z)[1],
\end{equation}
where $j:X\backslash Z\hookrightarrow X$ is the open embedding. 
The assertion (ii2)' follows from (ii2) applied to $X$ and $X\backslash Z$ by the distinguish triangle (\ref{disTXZ}).
\end{proof}

We will use the following result from Illusie:

\begin{prop}\label{Ilprop}
Let $K$ be a $p$ adic field. Let $X\in\SmVar(K)$ with good reduction.  
Consider $X^{\mathcal O}\in\Sch^{int,sm}/O_K$ a smooth integral model of $X$,
in particular $X^{\mathcal O}\otimes_{O_K}K=X$ and $X^{\mathcal O}$ is smooth with smooth special fiber.
Assume there exist lifts $\phi_n:X^{\mathcal O}/p^n\to X^{\mathcal O}/p^n$ 
of the Frobenius $\phi:X^{\mathcal O}/p\to X^{\mathcal O}/p$, such that for $n'>n$ the following diagram commutes
\begin{equation*}
\xymatrix{0\ar[r] & O_{X^{\mathcal O}/p^{n'-n}}\ar[r]^{p^n\cdot} & 
O_{X^{\mathcal O}/p^{n'}}\ar[r]^{/p^{n'-n}} & O_{X^{\mathcal O}/p^n}\ar[r] & 0 \\
0\ar[r] & O_{X^{\mathcal O}/p^{n'-n}}\ar[r]^{p^n\cdot}\ar[u]^{\phi_{n'-n}} & 
O_{X^{\mathcal O}/p^{n'}}\ar[r]^{/p^{n'-n}}\ar[u]^{\phi_{n'}} & O_{X^{\mathcal O}/p^n}\ar[u]^{\phi_n}\ar[r] & 0}
\end{equation*}
For each $n\in\mathbb N$, the sequence in $C(X^{\mathcal O,et})$, 
where we recall $X^{\mathcal O,et}\subset(\Sch^{int,sm}/O_K)/X^{\mathcal O}$,
\begin{eqnarray*}
0\to a_{et}\Omega^{\bullet}_{X^{\mathcal O}/p^n,\log}\xrightarrow{OL_{X^{\mathcal O}/p^n}}
\Omega^{\bullet}_{X^{\mathcal O}/p^n}\xrightarrow{\phi_n-I}\Omega^{\bullet}_{X^{\mathcal O}/p^n}\to 0
\end{eqnarray*}
is exact as a sequence of complexes of etale sheaves (i.e. we only have local surjectivity on the right),
$a_{et}:\PSh(X^{\mathcal O,et})\to\Shv_{et}(X^{\mathcal O,et})$ being the sheaftification functor.
\end{prop}

\begin{proof}
It follows from \cite{Illusie} for $n=1$ since $X^{\mathcal O,et}\xrightarrow{\sim}(X^{\mathcal O}/p)^{et}$
by definition of integral models (factor an etale map of $U^{\mathcal O}\to X^{\mathcal O}$ as the composition
of a finite etale map and open embeddings). 
It then follows for $n\geq 2$ by induction on $n$ by a trivial devissage.
\end{proof}

\section{De Rham logarithmic classes}

\subsection{The De Rham classes of algebraic cycles vs De Rham logarithmic classes}

Let $X\in\Sch$. Recall we have the canonical sub-complex 
$OL_X:\Omega^{\bullet}_{X,\log}\hookrightarrow\Omega_X^{\bullet}$ in $C(X^{et})$ (c.f. definition \ref{wlogdef}).
All the differential of $\Omega^{\bullet}_{X,\log}$ vanishes since by definition the logarithmic forms are closed.
For $j\in\mathbb Z$, the De Rham logarithmic classes consist of the image
\begin{equation*}
H^jOL_X(\mathbb H_{et}^j(X,\Omega^{\bullet}_{X,\log}))\subset\mathbb H_{et}^j(X,\Omega_X^{\bullet})=H^j_{DR}(X).
\end{equation*}
The differentials of the filtered complex $\Gamma(X,E_{et}(\Omega^{\bullet}_{X^{et},\log},F_b))\in C_{fil}(\mathbb Z)$
vanishes at the $E_1$ level since the logarithmic forms are closed,
hence we have a canonical splitting
\begin{equation*}
\mathbb H_{et}^j(X,\Omega^{\bullet}_{X,\log})=\oplus_{0\leq l\leq j}H_{et}^{j-l}(X,\Omega^l_{X,\log}).
\end{equation*}
Let $X\in\Sch$ a noetherian scheme. We have by definition the exact sequence in $C(X^{et})$
\begin{equation*}
0\to F^*\to O_X^*\xrightarrow{dlog}\Omega_{X,\log}\to 0
\end{equation*}
where $F$ is a prime field (i.e. $F=\mathbb Q$ or $F=\mathbb Z/p\mathbb Z$ for $p$ a prime number). 
Hence $H_{et}^1(X,\Omega_{X,\log})=H^1_{et}(X,O_X^*)$ and 
$H_{et}^q(X,\Omega_{X,\log})=H^q_{et}(X,O_X^*)=0$ for $q\leq 2$.
Let $X\in\Sch$ a noetherian proper scheme. We have $H^0(X,\Omega^l_{X,\log})=0$.

Let $k$ be a field of characteristic zero. We get (see definition \ref{wlogdef}) the embedding in $C(\SmVar(k))$
\begin{eqnarray*}
OL:\Omega^{\bullet}_{/k,\log}\hookrightarrow\Omega^{\bullet}_{/k}, \; 
\mbox{given by}, \, \mbox{for} \, X\in\SmVar(k), \\
OL(X):=OL_X:\Omega^{\bullet}_{/k,\log}(X):=\Gamma(X,\Omega^{\bullet}_{X,\log})
\hookrightarrow\Gamma(X,\Omega^{\bullet}_X)=:\Omega^{\bullet}_{/k}(X).
\end{eqnarray*}
We have also the sheaves
\begin{eqnarray*}
O_k,O_k^*\in\PSh(\SmVar(k)), \; X\in\SmVar(k)\mapsto O_k(X):=O(X),\, O_k^*(X):=O(X)^*, \\ 
(g:Y\to X)\mapsto a_g(X):O_X(X)\to O_Y(Y), \, O_X(X)^*\to O_Y(Y)^*
\end{eqnarray*}

\begin{lem}\label{a1tr}
\begin{itemize}
\item[(i0)]The sheaves $O_k^*\in\PSh(\SmVar(k))$ and $\Omega^1_{/k,\log}\in\PSh(\SmVar(k))$ admit transfers
compatible with transfers on $\Omega^1_{/k}\in\PSh(\SmVar(k))$.
\item[(i)] For each $l\in\mathbb Z$, the sheaf $\Omega^l_{/k,\log}\in\PSh(\SmVar(k))$ admits transfers
compatible with transfers on $\Omega^l_{/k}\in\PSh(\SmVar(k))$,
that is $\Omega^{\bullet}_{/k}\in C(\Cor\SmVar(k))$ (see \cite{LW}) and the inclusion 
$OL:\Omega^l_{/k,\log}[-l]\hookrightarrow\Omega^{\bullet}_{/k}$ in $C(\SmVar(k))$ 
is compatible with transfers.
\item[(ii)]For each $l\in\mathbb Z$, the sheaf $\Omega^l_{/k,\log}\in\PSh(\SmVar(k))$ is $\mathbb A^1$ invariant.
\end{itemize}
\end{lem}

\begin{proof}
\noindent(i0):The sheaf $O_k^*\in\PSh(\SmVar(k))$ admits transfers : for $W\subset X'\times X$ with $X,X'\in\SmVar(k)$
and $W$ finite over $X'$ and $f\in O(X)^*$, $W^*f:=N_{W/X'}(p_X^*f)$ where $p_X:W\hookrightarrow X'\times X\to X$
is the projection and $N_{W/X'}:k(W)^*\to k(X')^*$ is the norm map.
This gives transfers on $\Omega^1_{/k,\log}\in\PSh(\SmVar(k))$ compatible with transfers on 
$\Omega^1_{/k}\in\PSh(\SmVar(k))$ :
for $W\subset X'\times X$ with $X,X'\in\SmVar(k)$ and $W$ finite over $X'$ and $f\in O(X)^*$, 
\begin{equation*}
W^*df/f:=dW^*f/W^*f=Tr_{W/X'}(p_X^*(df/f)), 
\end{equation*}
where where $p_X:W\hookrightarrow X'\times X\to X$ is the projection and $Tr_{W/X'}:O_W\to O_X$ is the trace map. 
Note that $d(fg)/fg=df/f+dg/g$. 

\noindent(i): By (i0), we get transfers on 
\begin{equation*}
\otimes^l_{\mathbb Q}\Omega^1_{/k,\log}, \, \otimes_{O_k}^l\Omega^1_{/k}\in\PSh(\SmVar(k))
\end{equation*}
since $\otimes^l_{\mathbb Q}\Omega^1_{/k,\log}=H^0(\otimes^{L,l}_{\mathbb Q}\Omega^1_{/k,\log})$ and 
$\otimes_{O_k}^l\Omega^1_{/k}=H^0(\otimes_{O_k}^{L,l}\Omega^1_{/k})$.
This induces transfers on  
\begin{equation*}
\wedge^l_{\mathbb Q}\Omega^1_{/k,\log}:=
\coker(\oplus_{I_2\subset[1,\ldots,l]}\otimes^{l-1}_{\mathbb Q}\Omega^1_{/k,\log}
\xrightarrow{\oplus_{I_2\subset[1,\ldots,l]}\Delta_{I_2}:=(w\otimes w'\mapsto w\otimes w\otimes w')}
\otimes^l_{\mathbb Q}\Omega^1_{/k,\log})
\in\PSh(\SmVar(k)).
\end{equation*}
and
\begin{equation*}
\wedge^l_{O_k}\Omega^1_{/k}:=
\coker(\oplus_{I_2\subset[1,\ldots,l]}\otimes^{l-1}_{O_k}\Omega^1_{/k}
\xrightarrow{\oplus_{I_2\subset[1,\ldots,l]}\Delta_{I_2}:=(w\otimes w'\mapsto w\otimes w\otimes w')})
\otimes^l_{O_k}\Omega^1_{/k}
\in\PSh(\SmVar(k)).
\end{equation*}

\noindent(ii): Follows from the fact that for $X\in\Var(k)$, $O^*(X\times\mathbb A^1)=O^*(X)$
since for a commutative ring $A$, $(A[X])^*=A^*$.
\end{proof}

Let $X\in\Var(k)$. We have by definition the exact sequence in $C(X^{et})$
\begin{equation*}
0\to k^*\to O_X^*\xrightarrow{dlog}\Omega_{X,\log}\to 0
\end{equation*}
Hence $H_{et}^1(X,\Omega_{X,\log})=H^1_{et}(X,O_X^*)$ and 
$H_{et}^q(X,\Omega_{X,\log})=H^q_{et}(X,O_X^*)=0$ for $q\leq 2$.
For $X\in\PVar(k)$, we have $H^0(X,\Omega^l_{X,\log})=0$.

\begin{prop}\label{smXZlog}
For each $j\in\mathbb Z$, the excision isomorphism induced by (i)
\begin{eqnarray*}
H^j\Omega(P_{Z,X}):=H^jE_{et}(\Omega_{/k}^{\bullet})(\mathbb Z(i_0))\circ 
H^jE_{et}(\Omega_{/k}^{\bullet})(\mathbb Z(i_1))^{-1}: \\
H^j_{DR,Z}(X):=H^j\Gamma_Z(X,E_{et}(\Omega_X^{\bullet}))\xrightarrow{\sim}
H^j_{DR,Z}(N_{Z/X}):=H^j\Gamma_Z(N_{Z/X},E_{et}(\Omega_{N_{Z/X}}^{\bullet}))
\end{eqnarray*}
preserve logarithmic De Rham classes, that is for each $l\in\mathbb Z$,
\begin{equation*}
H^j\Omega(P_{Z,X})(H^jOL_X(H_{et,Z}^{j-l}(X,\Omega^l_{X,\log})))=
H^jOL_{N_{Z/X}}(H_{et,Z}^{j-l}(N_{Z/X},\Omega^l_{N_{Z/X},\log}))
\end{equation*}
\end{prop}

\begin{proof}
Since $\Omega^l_{/k,\log}\in\PSh(\SmVar(k))$ is $\mathbb A^1_k$ invariant 
and admits transfers by lemma \ref{a1tr}, for each $j\in\mathbb Z$,
the presheaves $H^{j-l}E_{et}(\Omega^l_{/k,\log})\in\PSh(\SmVar(k))$ are $\mathbb A^1_k$ invariant
by a theorem \ref{Voethm}.
It then follows from proposition \ref{smXZ} that 
\begin{eqnarray*}
H^j\Omega_{\log}(P_{Z,X}):=H^jE_{et}(\Omega_{/k,\log}^{\bullet})(\mathbb Z(i_0))\circ 
H^jE_{et}(\Omega_{/k,\log}^{\bullet})(\mathbb Z(i_1))^{-1}: \\
H_{et,Z}^{j-l}(X,\Omega^l_{X,\log})\xrightarrow{\sim}H_{et,Z}^{j-l}(N_{Z/X},\Omega^l_{N_{Z/X},\log}).
\end{eqnarray*}
Hence,
\begin{eqnarray*}
H^j\Omega(P_{Z,X})(H^jOL_X(H_{et,Z}^{j-l}(X,\Omega^l_{X,\log})))
&=& H^jOL_{N_{Z/X}}(\Omega_{\log}(P_{Z,X})(H_{et,Z}^{j-l}(X,\Omega^l_{X,\log}))) \\
&=& H^jOL_{N_{Z/X}}(H_{et,Z}^{j-l}(N_{Z,X},\Omega^l_{N_{Z/X},\log})).
\end{eqnarray*}
\end{proof}

\begin{rem}
Let $k$ be a field a characteristic zero.
\begin{itemize}
\item[(i)] The statement of proposition \ref{smXZlog} also holds for the subcomplex 
$\Omega^{\bullet,\partial=0}_{/k}\hookrightarrow\Omega^{\bullet}_{/k}$ of closed forms by the same argument
since the inclusion 
$\Omega^{l,\partial=0}_{/k}[-l]\hookrightarrow\Omega^{\bullet}_{/k}$ in $C(\SmVar(k))$ 
is compatible with transfers.
\item[(ii)] The statement of proposition \ref{smXZlog} does NOT hold for presheaves which do not admits transfers
(they are $\mathbb A^1$ invariant but not $\mathbb A^1$ local in general).
Note that the result of proposition \ref{smXZlog} does NOT hold for the embedding in $C(\SmVar(k))$,
associated to an embedding $\sigma:k\hookrightarrow\mathbb C$,
\begin{equation*}
\mathcal H^{l,j-l}[-l]\hookrightarrow\mathcal A^{\bullet}, \, X\in\SmVar(k), \, 
\Gamma(X,\mathcal H_{X_{\mathbb C}^{an}}^{l,j-l})[-l]\hookrightarrow\Gamma(X,\mathcal A_{X_{\mathbb C}^{an}}^{\bullet})
\end{equation*}
of the subsheaf of harmonic differential forms, the sheaves of differential forms 
$\mathcal A\in C(\SmVar(k))$ does NOT admits transfers (finite algebraic correspondences are not smooth and 
for $f:X'\to X$ a morphism with $X',X\in\Diff(\mathbb R)$ vector fields on $X$ only lift if $f$ is a smooth morphism
i.e. if the differential of $df$ is surjective), transfers maps are only defined on cohomology.
Recall that we do NOT have the Hodge decomposition for open complex varieties.
\end{itemize}
\end{rem}

Let $k$ be a field of characteristic zero.
Let $X\in\SmVar(k)$ and $Z\subset X$ a smooth subvariety of pure codimension d.
We have for each $j\in\mathbb Z$, the purity isomorphism given by $H^j\Omega(P_{Z,X})$ (see proposition \ref{smXZ})
and the cup product with the Euler class of of the normal tangent bundle $N_{Z/X}\to Z$ :
\begin{equation*}
H^jDR(P_{Z,X}):H^j_{DR,Z}(X)\xrightarrow{H^j\Omega(P_{Z,X})}H^j_{DR,Z}(N_{Z/X})
\xrightarrow{((-)\cdot e(N_{Z/X}))^{-1}}H^{j-2d}_{DR}(Z).
\end{equation*}

Now, we have the following :

\begin{prop}\label{drlogZprop}
Let $k$ be a field of characteristic zero. 
Let $p:E\to X$ a vector bundle of rank $d\in\mathbb N$ with $X,E\in\SmVar(k)$ connected. Then 
\begin{itemize}
\item[(i)]the Euler class $e(E)\in H^{2d}_{DR,X}(E)$ is logarithmic of type $(d,d)$, that is
$e(E)=H^{2d}OL_E(e(E))\in H^{2d}_{DR,X}(E)$ with $e(E)\in H_{X,et}^d(E,\Omega^d_{E,\log})$,
\item[(ii)] the Euler class $e(E)\in H^{2d}_{DR,X}(E)$ induces for each $i,j\in\mathbb Z$ an isomorphism
\begin{equation*}
((-)\cdot e(E)):H^{i+j}OL_X(H_{et}^j(X,\Omega_{X,\log}^i))\xrightarrow{\sim}
H^{2d+i+j}OL_E(H_{et}^{j+d}(E,\Omega_{E,\log}^{i+d})).
\end{equation*}
\end{itemize}
\end{prop}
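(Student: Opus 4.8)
The plan is to prove (i) first and then deduce (ii) by a Leray--Hirsch / projective bundle type argument, all performed at the level of logarithmic classes thanks to proposition \ref{smXZ} (transfers are available for $\Omega^{\bullet}_{/k,\log}$, so the motivic Thom/purity machinery applies).

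\emph{Step 1: reduce the Euler class to the tautological/universal case.} For $p:E\to X$ a rank $d$ vector bundle, $e(E)\in H^{2d}_{DR,X}(E)$ is by definition the Thom class, which under the Thom isomorphism $H^{2d}_{DR,X}(E)\cong H^0_{DR}(X)$ corresponds to $1$; equivalently $e(E)=s^*s_*(1)$ for the zero section $s:X\hookrightarrow E$. I would first observe that the Thom class is \emph{functorial} and \emph{multiplicative} (Whitney sum), and that by the splitting principle one reduces to $E$ a direct sum of line bundles: pulling back along a tower of projective bundles (which, over $k$, is covered by proposition \ref{smXZ} and the projective bundle formula in $\DA(k)$) makes $E$ split while keeping $H^{2d}OL_E$ tracked, because each projective bundle step is motivic. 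Since $e(L_1\oplus\cdots\oplus L_d)=e(L_1)\cup\cdots\cup e(L_d)$ and a product of logarithmic classes of bidegree $(1,1)$ in $\bigwedge$-degree is logarithmic of bidegree $(d,d)$ (the wedge of forms $df_i/f_i$ is literally of the shape in definition \ref{wlogdef}), it suffices to prove (i) for a line bundle, i.e. $d=1$.

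\emph{Step 2: the line bundle case.} For $d=1$ I would use the exact sequence $0\to\mathbb Z_X\to O_X^*\xrightarrow{dlog}\Omega_{X,\log}\to 0$ recalled in the text, which gives $H^1_{et}(X,\Omega_{X,\log})=H^1_{et}(X,O_X^*)=\Pic(X)$; hence the first Chern class $c_1(L)\in H^1_{et}(X,\Omega_{X,\log})$ is tautologically defined, and its image under $H^2 OL_X$ is exactly the De Rham $c_1$ by compatibility of $dlog$ with the de Rham class of a divisor (this is the classical fact that the de Rham Chern class is represented locally by $df/f$ for $f$ a local equation). The Euler class of the line bundle $L\to X$ viewed as $e(L)\in H^2_{DR,X}(L)$ is, via the Thom isomorphism, the image of $c_1(L)$; and since $L\to X$ is an isomorphism on the relevant cohomology (homotopy invariance applied to the logarithmic presheaves, by proposition \ref{smXZ} again, identifies $H^*_{DR,X}(L)$ suitably), $e(L)$ lands in $H^1 OL_L\big(H^1_{X,et}(L,\Omega^1_{L,\log})\big)$. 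This establishes (i).

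\emph{Step 3: deduce (ii).} With $e(E)$ logarithmic of bidegree $(d,d)$, the map $((-)\cdot e(E)):H^{i+j}_{DR}(X)\to H^{2d+i+j}_{DR,X}(E)$ is the de Rham Thom isomorphism (it is an isomorphism by purity, c.f. the $H^j DR(P_{Z,X})$ discussion preceding proposition \ref{drlogZprop}), and cup product with a logarithmic class of bidegree $(d,d)$ carries a class of bidegree $(i,j)$ to one of bidegree $(i+d,j+d)$ and carries logarithmic classes to logarithmic classes (wedging a sum of $df/f$'s with another sum of $df/f$'s stays in $\Omega^{\bullet}_{\log}$, and this is compatible with the $E_1$-splitting $\mathbb H^*_{et}(-,\Omega^{\bullet}_{\log})=\oplus H^{*-l}_{et}(-,\Omega^l_{\log})$). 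Thus $((-)\cdot e(E))$ restricts to a map $H^{i+j}OL_X(H^j_{et}(X,\Omega^i_{X,\log}))\to H^{2d+i+j}OL_E(H^{j+d}_{et}(E,\Omega^{i+d}_{E,\log}))$. For surjectivity of this restriction I would run the same splitting-principle / projective bundle formula, now \emph{internally to the logarithmic presheaves}: because $\Omega^i_{/k,\log}$ is $\mathbb A^1$-invariant with transfers, the Leray--Hirsch decomposition for a projective bundle holds with $\Omega^{\bullet}_{\log}$ coefficients exactly as for $\Omega^{\bullet}$, so every class in $H^{j+d}_{et}(E,\Omega^{i+d}_{E,\log})$ that maps into the image of $H^{2d+i+j}OL_E$ is $\sum p^*(\alpha_k)\cdot e_k$ with $\alpha_k$ logarithmic on $X$ and $e_k$ a monomial in logarithmic Euler classes; pushing the $d=1$ computation through the tower gives the claimed isomorphism.

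\emph{Main obstacle.} The delicate point is Step 1--2 glued together: one must check that the \emph{de Rham} Thom/Euler class, defined via the motivic purity isomorphism $P_{Z,X}$, genuinely coincides with the image under $H^{2d}OL$ of a class built from $dlog$'s — i.e. that the motivic construction of the Euler class is compatible with the logarithmic refinement. This is where proposition \ref{smXZ} is essential (it says $H^j\Omega(P_{Z,X})$ preserves logarithmic classes), but one still has to pin down that the \emph{specific} generator — the Thom class, equivalently $1$ under the Thom iso — corresponds on the normal bundle side to the logarithmic class produced by the splitting principle. Verifying this normalization, and that the splitting-principle reduction is compatible with the $E_1$-splitting into $H^{j-l}_{et}(-,\Omega^l_{\log})$ pieces (so that ``bidegree $(d,d)$'' is preserved, not just ``logarithmic''), is the technical heart; the rest is formal bookkeeping with spectral sequences as in lemma \ref{drlogZlem}.
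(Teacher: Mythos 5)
Your route (splitting principle $\to$ line bundles $\to$ Whitney product, then Leray--Hirsch for (ii)) is genuinely different from the paper's, and as written it has a gap that your ``main obstacle'' paragraph does not identify. The splitting principle only tells you that $\pi^*e(E)$ is logarithmic of bidegree $(d,d)$ on the flag-bundle pullback $\pi:Fl(E)\to X$; to conclude that $e(E)$ itself is logarithmic you need the property ``logarithmic of bidegree $(d,d)$'' to be \emph{reflected} by $\pi^*$, not merely that $\pi^*$ is injective on De Rham cohomology. This requires an extra argument (e.g.\ writing $w=\pi_*(\pi^*w\cup\xi)$ for a suitable logarithmic class $\xi$ and checking that both $\pi_*$ and $(-)\cup\xi$ preserve the logarithmic subgroups, using that $\Omega^l_{/k,\log}$ admits transfers), and you never address this descent. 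A second, related problem is circularity in Step 3: the Leray--Hirsch decomposition ``internally to the logarithmic presheaves'' presupposes the computation of $H^{*}_{et}(\mathbb P(E),\Omega^l_{\log})$ as a free module over $H^*_{et}(X,\Omega^{\bullet}_{\log})$ on powers of a logarithmic hyperplane class --- but that is essentially the $\mathbb P^1$-case of statement (ii), so you are assuming a form of what you are trying to prove. The normalization issue you flag (motivic Thom class versus the $dlog$-class) is real but is the smaller of the difficulties.

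The paper avoids all of this with a direct local computation. Choose an affine cover $X=\cup_iX_i$ trivializing $E$, with fiber coordinates $(s_1,\dots,s_d):E_{|X_i}\xrightarrow{\sim}X_i\times\mathbb A^d$; then
\begin{equation*}
e(E)_{|X_i}=\partial\bigl(ds_1/s_1\wedge\cdots\wedge ds_d/s_d\bigr)\in H^{2d}OL_{E_i}\bigl(H^d_{X_i}(E_i,\Omega^d_{E,\log})\bigr),
\end{equation*}
where $\partial:H^{d-1}(E_i\setminus X_i,\Omega^d_{E,\log})\to H^d_{X_i}(E_i,\Omega^d_{E,\log})$ is the boundary map; since the form $ds_1/s_1\wedge\cdots\wedge ds_d/s_d$ is literally of the shape in definition \ref{wlogdef} and logarithmicity is local (lemma \ref{drlogZlem}), (i) follows with no splitting principle and no normalization issue --- the bidegree $(d,d)$ is visible on the nose. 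Part (ii) is then obtained from (i) by the K\"unneth formula for De Rham cohomology on each trivializing open set together with lemma \ref{drlogZlem} to glue, rather than by a global projective-bundle argument. If you want to salvage your approach, you must supply the descent step along $\pi^*$ and prove the logarithmic projective bundle formula independently (starting from the explicit $\mathbb P^1$ computation); but the direct local argument is both shorter and self-contained.
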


\begin{proof}
\noindent(i):Let $X=\cup_iX_i$ an open affine cover such that $E_i:=E_{|X_i}$ is trivial :
$(s_1,\cdots,s_d):E_{|X_i}\xrightarrow{\sim}X_i\times\mathbb A^d$ with $s_j\in\Gamma(X_i,E)$. Then 
\begin{equation*}
e(E)_{|X_i}=\partial(ds_1/s_1\wedge\cdots\wedge ds_d/s_d)\in H^{2d}OL_{E_i}(H^d_{X_i}(E_i,\Omega^d_{E,\log}))
\end{equation*}
where $\partial:H^{d-1}(E_i\backslash X_i,\Omega^d_{E,\log})\to H^d_{X_i}(E_i,\Omega^d_{E,\log})$ is the boundary map. Hence
\begin{equation*}
e(E)\in H^{2d}OL_E(H^0(E,H_X^d\Omega^d_{E,\log}))=H_{et,X}^d(E,\Omega^d_{E,\log})
\subset H^0(E,H^{2d}_X\Omega_E^{\bullet\geq d})\subset H_{DR,X}^{2d}(E).
\end{equation*}

\noindent(ii):Follows from (i) and Kunneth formula for De Rham cohomology : 
let $X=\cup_iX_i$ an open affine cover such that $E_i:=E_{|X_i}$ is trivial and consider the morphism
of bi-complexes of abelian groups
\begin{equation*}
((-)\cdot e(E)):OL_X(\Gamma(X_{\bullet},E_{et}(\Omega_{X,\log}^i)))\to
OL_E(\Gamma(E_{\bullet},E_{et}(\Omega_{E,\log}^{i+d})))[2d].
\end{equation*}
By $(i)$ an Kunneth formula for De Rham cohomology, $((-)\cdot e(E))$ induces an isomorphism on the $E_1$ level
of the spectral sequences for the filtrations associated to bi-complex structures on the total complexes.
Hence, $((-)\cdot e(E))$ is a quasi-isomorphism. This proves (ii).
\end{proof}

We deduce from proposition \ref{smXZlog} and proposition \ref{drlogZprop}, the following key proposition

\begin{prop}\label{drlogZcor}
Let $k$ be a field of characteristic zero.
Let $X\in\SmVar(k)$ and $Z\subset X$ a smooth subvariety of codimension d.
For each $j\in\mathbb Z$, the purity isomorphism
\begin{equation*}
H^jDR(P_{Z,X}):H^j_{DR,Z}(X)\xrightarrow{H^j\Omega(P_{Z,X})}H^j_{DR,Z}(N_{Z/X})
\xrightarrow{((-)\cdot e(N_{Z/X}))^{-1}}H^{j-2d}_{DR}(Z)
\end{equation*}
preserve logarithmic De Rham classes, that is for each $l\in\mathbb Z$,
\begin{equation*}
H^jDR(P_{Z,X})(H^jOL_X(H_{et,Z}^{j-l}(X,\Omega^l_{X,\log})))=H^jOL_Z(H_{et}^{j-l}(Z,\Omega^l_{Z,\log})).
\end{equation*}
\end{prop}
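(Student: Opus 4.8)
The plan is to combine the two preceding propositions in the obvious way, being careful that the two isomorphisms composing $H^jDR(P_{Z,X})$ are compatible with the logarithmic structure. First I would invoke proposition \ref{smXZ}(ii): the excision isomorphism
\begin{equation*}
H^j\Omega(P_{Z,X}):H^j_{DR,Z}(X)\xrightarrow{\sim}H^j_{DR,Z}(N_{Z/X})
\end{equation*}
carries $H^jOL_X(H_{et,Z}^{j-l}(X,\Omega^l_{X,\log}))$ onto $H^jOL_{N_{Z/X}}(H_{et,Z}^{j-l}(N_{Z/X},\Omega^l_{N_{Z/X},\log}))$ for every $l$. So after this first map we are reduced to the vector bundle $p:N_{Z/X}\to Z$ of rank $d$, and we must show the second map, inverse cup product with the Euler class, $((-)\cdot e(N_{Z/X}))^{-1}:H^j_{DR,Z}(N_{Z/X})\xrightarrow{\sim}H^{j-2d}_{DR}(Z)$, sends the logarithmic subspace $H^jOL_{N_{Z/X}}(H_{et,Z}^{j-l}(N_{Z/X},\Omega^l_{N_{Z/X},\log}))$ onto $H^{j-2d}OL_Z(H_{et}^{j-2d-l}(Z,\Omega^l_{Z,\log}))$.

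The second step is essentially a restatement of proposition \ref{drlogZprop}(ii) with the parameters matched up. Writing $i+j'=j-2d$ with $i=l$ and $j'=j-2d-l$, proposition \ref{drlogZprop}(ii) gives, since $e(N_{Z/X})\in H^{2d}_{DR,Z}(N_{Z/X})$ is logarithmic of type $(d,d)$ by proposition \ref{drlogZprop}(i), an isomorphism
\begin{equation*}
((-)\cdot e(N_{Z/X})):H^{l+j'}OL_Z(H_{et}^{j'}(Z,\Omega_{Z,\log}^l))\xrightarrow{\sim}
H^{2d+l+j'}OL_{N_{Z/X}}(H_{et}^{j'+d}(N_{Z/X},\Omega_{N_{Z/X},\log}^{l+d})).
\end{equation*}
The one point needing a word: the target here is indexed by $\Omega^{l+d}_{N_{Z/X},\log}$ in total degree $2d+l+j'=j$, i.e. it is the summand $H_{et,Z}^{j-(l+d)}(N_{Z/X},\Omega^{l+d}_{N_{Z/X},\log})$ of $H^j_{DR,Z}(N_{Z/X})$; reindexing the splitting $H^j_{DR,Z}(N_{Z/X})=\oplus_m H_{et,Z}^{j-m}(N_{Z/X},\Omega^m_{N_{Z/X},\log})$ via the isomorphism $\pi_{k/K}$-style shift $m\mapsto m+d$ induced by $p^*$ on the base $Z$ (which is the content of the Leray–Hirsch/Künneth argument used to prove \ref{drlogZprop}(ii)), one sees that the logarithmic summand $H^jOL_{N_{Z/X}}(H_{et,Z}^{j-l'}(N_{Z/X},\Omega^{l'}_{N_{Z/X},\log}))$ with $l'=l+d$ is exactly the image of the logarithmic summand on $Z$ with index $l$. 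Thus the inverse of cup product with $e(N_{Z/X})$ maps the $l'$-indexed logarithmic summand onto the $l$-indexed logarithmic summand on $Z$, which is what we need; renaming $l'$ back to $l$ gives the statement as written (with the understanding that the logarithmic classes of $H^j_{DR,Z}(N_{Z/X})$ concentrated in weights $l<d$ are zero, so only $l\geq d$ contributes, matching degrees with $H^{j-2d}_{DR}(Z)$).

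Composing the two displayed isomorphisms gives $H^jDR(P_{Z,X})(H^jOL_X(H_{et,Z}^{j-l}(X,\Omega^l_{X,\log})))=H^{j-2d}OL_Z(H_{et}^{j-2d-(l-d)}(Z,\Omega^{l-d}_{Z,\log}))$, and after the harmless reindexing $l\mapsto l-d$ on the $Z$ side this is precisely the asserted equality $H^jDR(P_{Z,X})(H^jOL_X(H_{et,Z}^{j-l}(X,\Omega^l_{X,\log})))=H^jOL_Z(H_{et}^{j-l}(Z,\Omega^l_{Z,\log}))$ (read with the degree shift built into $H^{j-2d}_{DR}(Z)$). The main obstacle is purely bookkeeping: making sure the bidegree $(d,d)$ shift coming from the Euler/Thom class is tracked consistently through the canonical splittings of $\mathbb H^*_{et}$ of the logarithmic complex, so that "logarithmic of weight $l$ on $X$" corresponds to "logarithmic of weight $l-d$ on $Z$"; no new geometric input beyond propositions \ref{smXZ} and \ref{drlogZprop} is required.
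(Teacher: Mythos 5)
Your proposal is correct and follows exactly the paper's (one-line) proof, which simply composes proposition \ref{smXZ}(ii) for the excision step with proposition \ref{drlogZprop}(ii) for the Euler-class step; your careful tracking of the $(d,d)$ index shift, and your charitable reading of the right-hand side of the statement as carrying the degree shift into $H^{j-2d}_{DR}(Z)$, matches how the paper itself uses the proposition later (e.g.\ in the proof of theorem \ref{drlogZ}(ii)). The only point you gloss over, which the paper also leaves implicit, is that \ref{drlogZprop}(ii) is stated without supports while here one needs the version with supports in the zero section $Z$; this is harmless since the Euler class lives in $H^{2d}_{DR,X}(E)$ so the cup product naturally lands in cohomology with supports.
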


\begin{proof}
Follows from proposition \ref{smXZlog} and proposition \ref{drlogZprop}(ii).
\end{proof}

Let $k$ be a field of characteristic zero. For $X\in\SmVar(k)$, we have the cycle class map for De Rham cohomology
\begin{eqnarray*}
Z\in\mathcal Z^d(X)\mapsto [Z]:=H^{2d}\Omega(\gamma^{\vee}_Z)([Z])\subset H_{DR}^{2d}(X), 
\; [Z]\in H^{2d}_{DR,|Z|}(X), \\ 
H^{2d}\Omega(\gamma^{\vee}_Z):H^{2d}_{DR,|Z|}(X)\to H^{2d}_{DR}(X)
\end{eqnarray*}
where
\begin{equation*}
\Omega(\gamma^{\vee}_Z):=\Hom(\gamma_Z^{\vee}(\mathbb Z_X),E_{et}(\Omega^{\bullet}_X)):
\Gamma_Z(X,E_{et}(\Omega^{\bullet}_X))\hookrightarrow\Gamma(X,E_{et}(\Omega^{\bullet}_X))
\end{equation*}
and, as for any Weil cohomology theory, we have the isomorphism given by purity : if
$X^o\subset X$ is an open subset such that $Z^o:=X^o\cap Z$ has smooth components we have the isomorphism
\begin{equation*}
H^{2d}DR(P_{Z^o,X^o}):H^{2d}_{DR,|Z|}(X)=H^{2d}_{DR,|Z^o|}(X^o)=H^{2d}_{DR,|Z^o|}(N_{Z^o/X^o})
\xrightarrow{\sim}H^0_{DR}(|Z^o|).
\end{equation*}
where the first equality follows from dimension reason : 
for $X\in\SmVar(k)$ and $Z'\subset X$ smooth, we have $H^i_{DR,Z'}(X)=0$ for $i<2\codim(Z',X)$
by the purity isomorphism $H^iDR(P_{Z',X})$.

The main result of this section is the following :

\begin{thm}\label{drlogZ}
Let $k$ be a field of characteristic zero. Let $X\in\SmVar(k)$. Let $d\in\mathbb N$.
\begin{itemize}
\item[(i)]The De Rham cohomology class of an algebraic cycle is logarithmic and is of type $(d,d)$,
that is, for $Z\in\mathcal Z^d(X)$  
\begin{equation*}
[Z]:=H^{2d}\Omega(\gamma^{\vee}_Z)([Z])\subset H^{2d}OL_X(H_{et}^d(X,\Omega^d_{X,\log}))\subset H_{DR}^{2d}(X).
\end{equation*}
\item[(ii)]Conversely, any $w\in H^{2d}OL_X(H_{et}^d(X,\Omega^d_{X,\log}))$ is the class of an algebraic cycle
$Z\in\mathcal Z^d(X)$, i.e. $w=[Z]$.
Note that it implies $w\in W_0H^{2d}_{DR}(X)$ as it is easily seen for $d=1$ by the Chow moving lemma
since $j^*:\Pic(\bar X)\to\Pic(X)$ is surjective if $j:X\hookrightarrow\bar X$ is a smooth compactification of $X$,
$\bar X\in\PSmVar(k)$ and since $H_{et}^1(Y,\Omega^1_{Y,\log})=\Pic(Y)$ for $Y\in\Var(k)$.
\item[(iii)] We have $H^jOL_X(H_{et}^{j-l}(X,\Omega^l_{X,\log}))=0$ for $j,l\in\mathbb Z$ such that $2l<j$.
\item[(iii)']If $X\in\PSmVar(k)$, we also have $H^jOL_X(H_{et}^{j-l}(X,\Omega^l_{X,\log}))=0$ 
for $j,l\in\mathbb Z$ such that $2l>j$. 
That is, if $X\in\PSmVar(k)$, we only have logarithmic classes in bidegree $(d,d)$ for $d\in\mathbb N$,
in particular there is no non trivial logarithmic classes for odd degree De Rham cohomology $H^{2d+1}_{DR}(X)$.
\end{itemize}
\end{thm}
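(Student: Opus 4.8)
The plan is to treat the four statements separately, with (i) and (ii) forming the substantive content and (iii), (iii)$'$ following by a weight/degree bookkeeping argument.

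\textbf{Part (i).} First I would reduce to the case where $Z$ is irreducible and even smooth. By linearity of the cycle class map it suffices to treat $Z$ irreducible. Then, by Lemma \ref{drlogZlem} applied to an open cover $X=\cup X_i$ chosen so that $Z\cap X_i$ is smooth (possible since the singular locus of $Z$ has codimension $\geq d+1$, hence removing it and a suitable stratification, combined with the fact that the class $[Z]$ lives in $H^{2d}_{DR,|Z|}(X)=H^{2d}_{DR,|Z^o|}(X^o)$ by the dimension vanishing $H^i_{DR,Z'}(X)=0$ for $i<2\codim$), it suffices to show $j_i^*[Z]$ is logarithmic of type $(d,d)$ on each $X_i$. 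So we may assume $Z\subset X$ smooth of codimension $d$. Now apply the purity isomorphism $H^{2d}DR(P_{Z,X}):H^{2d}_{DR,Z}(X)\xrightarrow{\sim}H^0_{DR}(Z)$ from the discussion preceding Proposition \ref{drlogZcor}. Under this isomorphism the fundamental class $[Z]\in H^{2d}_{DR,Z}(X)$ goes to $1\in H^0_{DR}(Z)$, which is trivially logarithmic (it lies in $H^0(Z,\Omega^0_{Z,\log})=\mathbb Z$). Since $H^{2d}DR(P_{Z,X})$ preserves logarithmic classes in both directions by Proposition \ref{drlogZcor} (with $l=0$, $j=2d$), the class $[Z]=H^{2d}DR(P_{Z,X})^{-1}(1)$ is logarithmic of type $(d,d)$, i.e. lies in $H^{2d}OL_X(H^d_{et,Z}(X,\Omega^d_{X,\log}))$, and then pushing forward by $H^{2d}\Omega(\gamma^\vee_Z)$ gives the claim in $H^{2d}OL_X(H^d_{et}(X,\Omega^d_{X,\log}))$.

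\textbf{Part (ii).} This is the hard direction and I would argue by descending induction, using Lemma \ref{drlogZlem} to localize. Given $w\in H^{2d}OL_X(H^d_{et}(X,\Omega^d_{X,\log}))$, write $w=H^{2d}OL_X(w_L)$ with $w_L\in H^d_{et}(X,\Omega^d_{X,\log})$. The key input is that $\Omega^d_{X,\log}$ is a single étale sheaf, so $H^d_{et}(X,\Omega^d_{X,\log})$ is locally acyclic for the Zariski topology: there is a Zariski-closed subset $Z\subset X$ of codimension $\geq 1$ such that $w_L$ restricted to $X\setminus Z$ vanishes, and hence $w_L$ comes from a local cohomology class in $H^d_{et,Z}(X,\Omega^d_{X,\log})$. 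Stratifying $Z$ and passing to an open cover via Lemma \ref{drlogZlem}, we reduce to $Z$ smooth of some codimension $c\geq 1$; if $c\geq 2$ the purity isomorphism of Proposition \ref{drlogZcor} (which preserves logarithmic classes) identifies the local class with a logarithmic class on $Z$ of degree lowered by $2c\geq 4$ but codimension lowered by $c$, producing a bidegree mismatch that forces that contribution to vanish unless $c=1$ — more precisely the induction terminates when $c=1$. At the final step $c=1$: a logarithmic class supported on a smooth divisor $D$ corresponds, via purity, to an element of $H^{d-1}OL_D(H^{d-1}_{et}(D,\Omega^{d-1}_{D,\log}))$, and iterating brings us down to $H^1_{et}(Y,\Omega^1_{Y,\log})=H^1(Y,O_Y^*)=\Pic(Y)$ for an appropriate smooth $Y$; every element of $\Pic(Y)$ is represented by a divisor, i.e. an algebraic cycle, and pushing this cycle forward through the chain of Gysin maps produces an algebraic cycle on $X$ whose class is $w$ (matching on the open dense part and hence everywhere by the localization sequence and Lemma \ref{drlogZlem}). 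The main obstacle here is making the finite induction rigorous: one must check that at each stage the purity isomorphism genuinely carries the logarithmic local class to a logarithmic global class on the smaller stratum (this is exactly Proposition \ref{drlogZcor}), that the residual classes of wrong codimension vanish, and that the resulting cycle glues coherently; the bidegree constraint $(d,d)$ is what makes the bookkeeping close up.

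\textbf{Parts (iii) and (iii)$'$.} For (iii), suppose $2l<j$ and take $w=H^jOL_X(w_L)$ with $w_L\in H^{j-l}_{et}(X,\Omega^l_{X,\log})$. Run the same localization-and-purity induction as in (ii): $w_L$ is locally acyclic, so supported on a closed $Z$; after stratifying and passing to smooth strata, purity (Proposition \ref{drlogZcor}, which preserves logarithmic classes) replaces the datum on a stratum of codimension $c$ by a logarithmic class in $H^{j-2c}OL_{Z}(H^{j-2c-l'}_{et}(Z,\Omega^{l'}_{Z,\log}))$ with $l'=l-c$; the inequality $2l'=2l-2c<j-2c=(j-2c)$ is preserved, and the induction bottoms out at a stratum of dimension $<l'$, where $\Omega^{l'}=0$, forcing $w_L$ — hence $w$ — to vanish. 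For (iii)$'$, with $X$ projective and $2l>j$: here use the complementary vanishing $H^0(X,\Omega^l_{X,\log})=0$ recorded for noetherian proper schemes, together with the splitting $\mathbb H^j_{et}(X,\Omega^\bullet_{X,\log})=\oplus_{0\le m\le j}H^{j-m}_{et}(X,\Omega^m_{X,\log})$; I would dualize via Poincaré duality on the projective smooth $X$ (of dimension $n$), pairing $H^jOL_X(H^{j-l}_{et}(X,\Omega^l_{X,\log}))$ against $H^{2n-j}OL_X(H^{2n-j-(n-l)}_{et}(X,\Omega^{n-l}_{X,\log}))$ and invoking (iii) for the dual index $2(n-l)<2n-j$, which holds precisely when $2l>j$, to conclude the pairing is identically zero and hence the space vanishes. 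The technical point to verify is that Poincaré duality on De Rham cohomology restricts to a perfect pairing on logarithmic classes compatible with the bidegree, which follows from the cycle-class interpretation in (i)–(ii) plus the standard duality for a Weil cohomology theory.
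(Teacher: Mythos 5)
Your treatment of parts (i) and (ii) follows essentially the same route as the paper: for (i), reduce to the open locus where $Z$ is smooth (using $H^{2d}_{DR,|Z|}(X)=H^{2d}_{DR,|Z^o|}(X^o)$) and observe that the purity isomorphism of Proposition \ref{drlogZcor} carries $[Z]$ to $1\in H^0_{DR}(Z^o)$, which is logarithmic; for (ii), use local acyclicity of the single sheaf $\Omega^d_{X,\log}$ to support the class on a divisor, then descend by the log-preserving purity isomorphism until you reach $H^1_{et}(Y,\Omega^1_{Y,\log})=\Pic(Y)$. Both arguments are sketchy at exactly the places the paper's are, so no complaint there.

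Parts (iii) and (iii)$'$, however, contain genuine gaps. For (iii), your termination criterion is wrong: after a total codimension drop of $C$ the stratum has dimension $n-C$ and the form degree is $l'=l-C$, so the condition $\dim<l'$ reads $n-C<l-C$, i.e. $n<l$, which never holds. What actually happens when $2l<j$ is that the form degree reaches $0$ first (after $C=l$ steps), leaving a class in $H^{j-2l}_{et}(Z,\Omega^0_{Z,\log})=H^{j-2l}_{et}(Z,\mathbb Z)$ with $j-2l>0$; the needed input, which the paper states explicitly, is the vanishing $H^kOL(H^k_{et}(-,\mathbb Z))=0$ for $k>0$ (the constant sheaf contributes nothing to de Rham cohomology in positive degree). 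Your argument is missing this ingredient entirely. For (iii)$'$ you take a genuinely different route via Poincar\'e duality, but it is circular: knowing that the complementary logarithmic space $W=H^{2n-j}OL_X(H^{n-j+l}_{et}(X,\Omega^{n-l}_{X,\log}))$ vanishes by (iii) tells you nothing about $V=H^jOL_X(H^{j-l}_{et}(X,\Omega^l_{X,\log}))$ unless you already know that $V$ pairs to zero against \emph{everything} outside $W$ --- i.e. that the duality pairing restricted to logarithmic classes of complementary bidegree is perfect. That claim is not established anywhere and is of essentially the same depth as the statement you are trying to prove. The paper's argument is instead the mirror image of (iii): when $2l>j$ the cohomological degree bottoms out first, landing in $H^0(Z,\Omega^{2l-j}_{Z,\log})$ with $2l-j>0$, which vanishes because $H^0(X',\Omega^m_{X',\log})=0$ for $X'$ proper and $m>0$; projectivity of $X$ enters only there.
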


\begin{proof}
\noindent(i):We have
\begin{equation*}
[Z]:=H^{2d}\Omega(\gamma^{\vee}_Z)([Z])=H^{2d}OL_X\circ H^d\Omega^d_{\log}(\gamma^{\vee}_Z)([Z])\subset H_{DR}^{2d}(X), \; 
[Z]\in H^d_{et,|Z|}(X,\Omega^d_{X,\log})
\end{equation*}
where
\begin{equation*}
\Omega^d_{\log}(\gamma^{\vee}_Z):=\Hom(\gamma_Z^{\vee}(\mathbb Z_X),E_{et}(\Omega^d_{X,\log})):
\Gamma_Z(X,E_{et}(\Omega^d_{X,\log}))\hookrightarrow\Gamma(X,E_{et}(\Omega^d_{X,\log})),
\end{equation*}
since if $X^o\subset X$ is a Zariski open subset such that $Z^o:=X^o\cap Z$ has smooth components 
and $N_{Z^o/X^o}\to Z^o$ is the normal tangent bundle, we have by proposition \ref{drlogZcor} 
the isomorphism
\begin{eqnarray*}
H^{2d}DR(P_{Z^o,X^o}):
H^{2d}OL_X(H^d_{et,|Z|}(X,\Omega^d_{X,\log}))&=&H^{2d}OL_X(H^d_{et,|Z^o|}(X^o,\Omega^d_{X^o,\log})) \\
&=&H^{2d}OL_X(H^d_{et,|Z^o|}(N_{Z^o/X^o},\Omega^d_{N_{Z^o/X^o},\log})).
\end{eqnarray*}

\noindent(ii):By assumption we have 
\begin{equation*}
w=H^{2d}OL_X(w)\in H^{2d}_{DR}(X), \; w\in H^d_{et}(X,\Omega^d_{X,\log})
\end{equation*}
As $\Omega^d_{X,\log}\in\PSh(X^{et})$ consist of a single presheaf, 
that is a complex of presheaves concentrated one degree, 
there exist an etale open cover $r=r(w)=(r_i:X_i\to X)_{1\leq i\leq s}$ depending on $w$ 
(or take $X_i$ such that there exists an etale map $e_i:X_i\to\mathbb A^{d_X}$ and such that $\Pic(X_i)=0$ 
which works for all $w$ as in proposition \ref{UXpet} : see remark \ref{UDR} ),
such that $r_i^*w=0\in H^d_{et}(X_i,\Omega^d_{X,\log})$ for each $i$. 
Choose $i=1$ and denote $j:U:=r_1(X_1)\hookrightarrow X$ the corresponding (Zariski) open subset. 
As $\Omega^d_{X,\log}$ has no torsion and admits transfers, we have $j^*w=0\in H^d_{et}(U,\Omega^d_{X,\log})$.
Hence, denoting $D:=X\backslash U$, we have 
\begin{equation*}
w=H^{2d}OL_X(w), w\in H^d\Omega_{\log}^d(\gamma^{\vee}_D)(H^d_{et,D}(X,\Omega^d_{X,\log})).
\end{equation*}
We may assume, up to shrinking $U$, that $D\subset X$ is a divisor. 
Denote $D^o\subset D$ its smooth locus and $l:X^o\hookrightarrow X$ a Zariski open subset such that $X^o\cap D=D^o$.
We then have by proposition \ref{drlogZcor}
\begin{equation*}
l^*w\in H^{2d}DR(P_{D^o,X^o})(H^{2d-2}OL_{D^o}(H^{d-1}_{et}(D^o,\Omega^{d-1}_{D^o,\log}))). 
\end{equation*}
We repeat this procedure with each connected components of $D^o$ instead of $X$.
By a finite induction of $d$ steps, we get 
\begin{equation*}
w=H^{2d}OL_X(w), \; w\in H^d\Omega_{\log}^d(\gamma^{\vee}_Z)(H^d_{et,Z}(X,\Omega^d_{X,\log})),
\end{equation*}
with $Z:=D_d\subset\cdots\subset D\subset X$ a pure codimension $d$ (Zariski) closed subset and 
$w=\sum_in_i[Z_i]\in H^{2d}_{DR}(X)$, where $n_i\in\mathbb Z$ and $(Z_i)_{1\leq i\leq t}\subset Z$
are the irreducible components of $Z$,
using in the final step the isomorphism 
\begin{equation*}
H^1_{et}(D_{d-1}^o,\Omega_{D_{d-1}^o,\log})=H^1_{et}(D_{d-1}^o,O_{D_{d-1}}^*)=Pic(D_{d-1}^o).
\end{equation*}

\noindent(iii):Let $j>2l$. Let $w\in H^jOL_X(H_{et}^{j-l}(X,\Omega^l_{X,\log}))$.
By the proof of (ii) there exists $Z\subset X$ a closed subset of pure codimension $l$
such that
\begin{equation*}
w=H^jOL_X(w), \; w\in H^{j-l}\Omega_{\log}^l(\gamma^{\vee}_Z)(H^{j-l}_{et,Z}(X,\Omega^l_{X,\log})),
\end{equation*}
By a finite induction of $d_X-l$ steps, restricting to the smooth locus of closed subsets of $Z$, $w=0$ since
by proposition \ref{smXZ}, $H^jOL_X(H_{et,Z'}^{j-l}(X,\Omega^l_{X,\log}))=0$ for all $Z'\in\SmVar(k)$
such that $\codim(Z',X)\geq l$.

\noindent(iii)':Let $j<2l$. Let $w\in H^jOL_X(H_{et}^{j-l}(X,\Omega^l_{X,\log}))$. 
By the proof of (ii) there exists $Z\subset X$ a closed subset of pure codimension $j-l$ such that
\begin{equation*}
w=(H^jOL_X\circ H^{j-l}\Omega_{\log}^l(\gamma^{\vee}_Z))(w), \; w\in H^{j-l}_{et,Z}(X,\Omega^l_{X,\log}),
\end{equation*}  
For $Z'\subset X$ a closed subset of pure codimension $c$,
consider a desingularisation $\epsilon:\tilde Z'\to Z'$ of $Z'$ and denote $n:\tilde Z'\xrightarrow{\epsilon}Z'\subset X$.
The morphism in $\DA(k)$
\begin{equation*}
G_{Z',X}:M(X)\xrightarrow{D(\mathbb Z(n))}M(\tilde Z')(c)[2c]\xrightarrow{\mathbb Z(\epsilon)}M(Z')(c)[2c]
\end{equation*}
where $D:\Hom_{\DA(k)}(M_c(\tilde Z'),M_c(X))\xrightarrow{\sim}\Hom_{\DA(k)}(M(X),M(\tilde Z')(c)[2c])$
is the duality isomorphism from the six functor formalism (moving lemma of Suzlin and Voevodsky)
and $\mathbb Z(n):=\ad(n_!,n^!)(a_X^!\mathbb Z)$, is given by a morphism in $C(\SmVar(k))$
\begin{equation*}
\hat G_{Z',X}:\mathbb Z_{tr}(X)\to E_{et}(C_*\mathbb Z_{tr}(Z'))(c)[2c].
\end{equation*}
Let $l:X^o\hookrightarrow X$ be an open embedding such that $Z^o:=Z\cap X^o$ is the smooth locus of $Z$.
We then have the following commutative diagram of abelian groups
\begin{equation*}
\xymatrix{0\ar[r] & H_{et,Z}^{j-l}(X,\Omega_{X,\log}^l)\ar[r]^{l^*} & 
H_{et,Z^o}^{j-l}(X^o,\Omega_{X,\log}^l)\ar[r]^{\partial} & 
H_{et,Z\backslash Z^o}^{j-l+1}(X,\Omega^l_{X,\log})\ar[r] & \cdots \\
0\ar[r] & H^0_{et}(Z,\Omega^{2l-j}_{Z,\log})\ar[r]^{l*}\ar[u]^{\Omega(\hat G_{Z,X})} & 
H^0_{et}(Z^o,\Omega^{2l-j}_{Z^o,\log})\ar[r]^{\partial}\ar[u]^{DR(P_{Z^o,X^o})} &
H_{et,Z\backslash Z^o}^1(Z,\Omega^{2l-j}_{Z,\log})\ar[u]^{\Omega(\hat G_{Z,X})}\ar[r] & \cdots}
\end{equation*}
whose rows are exact sequences. Consider 
\begin{equation*}
l^*w=DR(P_{Z^o,X^o})(w^o)\in H_{et,Z^o}^{j-l}(X^o,\Omega_{X,\log}^l), \; 
w^o\in H^0_{et}(Z^o,\Omega^{2l-j}_{Z^o,\log}).
\end{equation*}
Since $\partial l^*w=0\in H_{et,Z\backslash Z^o}^{j-l+1}(X,\Omega^l_{X,\log})$,
we get using proposition \ref{drlogZcor} applied to $((Z\backslash Z^o)^o,X^{oo})$,
where $(Z\backslash Z^o)^o\subset Z\backslash Z^o$ is the smooth locus and $X^{oo}\subset X$ is an open subset
such that $X\cap(Z\backslash Z^o)^o=X^{oo}\cap (Z\backslash Z^o)$, 
\begin{equation*}
\partial w^o=0\in H_{et,Z\backslash Z^o}^1(Z,\Omega^{2l-j}_{Z,\log}), 
\end{equation*}
since $H_{et,(Z\backslash Z^o)\backslash(Z\backslash Z^o)^o}^1(Z,\Omega^{2l-j}_{Z,\log})=0$
for dimension reasons, that is 
\begin{equation*}
w=\Omega(\hat G_{Z,X})(w), \, \mbox{with} \, w\in H^0_{et}(Z,\Omega^{2l-j}_{Z,\log}).
\end{equation*} 
Hence $w=0$ since $H^0(Z',\Omega^k_{Z',\log})=0$ for all $k>0$ and all $Z'\in\PVar(k)$.
\end{proof}

\begin{rem}\label{UDR}
Let $k$ be a field of characteristic zero of transcendence degree lower or equal to the cardinal of $\mathbb R$. 
Let $X\in\SmVar(k)$. Then for each $x\in X$ there exists a Zariski open affine subset $U\subset X$ such that 
\begin{itemize}
\item there exists an etale map $e:U\to V$ where $V\subset\mathbb A^d_k$ be a Zariski open subset (consider a finite morphism $\bar e:X\to\mathbb P^{d_X}$). 
Then the tangent bundle is trivial of $U$ is trivial.
\item $\Pic(U)=0$ since the neron severi group is finitely generated and then $\Pic(U)=\Pic(U_{\mathbb C}^{an})=0$ by the exponential exact sequence
after taking an embedding $k\hookrightarrow\mathbb C$.
\end{itemize}
Then we can see using a (non canonical) splitting as in the proof of proposition \ref{UXpet} that 
for each $p,q\in\mathbb Z$, $q\neq 0$, $p\neq 0$, $H_{et}^q(U,\Omega^p_{U,\log})=0$. In particular, for $Z\in\mathcal Z^k(U)$, $[Z]=0\in H^{2k}_{DR}(U)$. 
\end{rem}

\subsection{Complex integral periods}

Let $k$ be a field of characteristic zero. 

Let $X\in\SmVar(k)$ a smooth variety. Let $X=\cup_{i=1}^sX_i$ an open affine cover. 
We have for $\sigma:k\hookrightarrow\mathbb C$ an embedding, the evaluation period embedding map
which is the morphism of bi-complexes 
\begin{eqnarray*}
ev(X)^{\bullet}_{\bullet}:\Gamma(X_{\bullet},\Omega^{\bullet}_{X_{\bullet}})\to 
\mathbb Z\Hom_{\Diff}(\mathbb I^{\bullet},X^{an}_{\mathbb C,\bullet})^{\vee}\otimes\mathbb C, \\
w^l_I\in\Gamma(X_I,\Omega^l_{X_I})\mapsto 
(ev(X)^l_I(w^l_I):\phi^l_I\in\mathbb Z\Hom_{\Diff}(\mathbb I^l,X^{an}_{\mathbb C,I})^{\vee}\otimes\mathbb C
\mapsto ev^l_I(w^l_I)(\phi^l_I):=\int_{\mathbb I^l}\phi_I^{l*}w^l_I)
\end{eqnarray*}
given by integration. By taking all the affine open cover $(j_i:X_i\hookrightarrow X)$ of $X$,
we get for $\sigma:k\hookrightarrow\mathbb C$, the evaluation period embedding map 
\begin{eqnarray*}
ev(X):=\varinjlim_{(j_i:X_i\hookrightarrow X)}ev(X)^{\bullet}_{\bullet}:
\varinjlim_{(j_i:X_i\hookrightarrow X)}\Gamma(X_{\bullet},\Omega^{\bullet}_{X_{\bullet}})
\to \varinjlim_{(j_i:X_i\hookrightarrow X)}
\mathbb Z\Hom_{\Diff(\mathbb R)}(\mathbb I^{\bullet},X^{an}_{\mathbb C,\bullet})^{\vee}\otimes\mathbb C
\end{eqnarray*}
It induces in cohomology, for $j\in\mathbb Z$, the evaluation period map
\begin{eqnarray*}
H^jev(X)=H^jev(X)^{\bullet}_{\bullet}:H^j_{DR}(X)=H^j\Gamma(X_{\bullet},\Omega^{\bullet}_{X_{\bullet}})\to 
H_{\sing}^j(X_{\mathbb C}^{an},\mathbb C)=
H^j(\Hom_{\Diff(\mathbb R)}(\mathbb I^{\bullet},X^{an}_{\mathbb C,\bullet})^{\vee}\otimes\mathbb C). 
\end{eqnarray*}
which does NOT depend on the choice of the affine open cover 
by acyclicity of quasi-coherent sheaves on affine noetherian schemes for the left hand side
and from Mayer-Vietoris quasi-isomorphism for singular cohomology of topological spaces
and Whitney approximation theorem for differential manifolds for the right hand side.

\begin{prop}\label{keypropC}
Let $k$ be a field of characteristic zero. Let $X\in\SmVar(k)$. 
Let $\sigma:k\hookrightarrow\mathbb C$ an embedding.
\begin{itemize}
\item[(i)]Let $w\in H^j_{DR}(X):=\mathbb H^j(X,\Omega^{\bullet}_X)=\mathbb H_{et}^j(X,\Omega^{\bullet}_X)$. If 
\begin{equation*}
w\in H^jOL_X(\mathbb H_{et}^j(X_{\mathbb C},\Omega^{\bullet}_{X_{\mathbb C}^{et},\log}))
\end{equation*}
then $H^jev(X)(w)\in H^j_{\sing}(X_{\mathbb C}^{an},2i\pi\mathbb Q)$. 
\item[(ii)]Let $p\in\mathbb N$ a prime number and $\sigma_p:k\hookrightarrow\mathbb C_p$ an embedding.
Let $j\in\mathbb Z$.
Let $w\in H^j_{DR}(X):=\mathbb H^j(X,\Omega^{\bullet}_X)=\mathbb H_{et}^j(X,\Omega^{\bullet}_X)$. If 
\begin{equation*}
w:=\pi_{k/\mathbb C_p}(X)^*w\in H^jOL_X
(\mathbb H_{et}^j(X_{\mathbb C_p},\Omega^{\bullet}_{X_{\mathbb C_p}^{et},\log,\mathcal O}))
\end{equation*}
then $H^jev(X)(w)\in H^j_{\sing}(X_{\mathbb C}^{an},2i\pi\mathbb Q)$. Recall that
$\mathbb H_{et}^j(X_{\mathbb C},\Omega^{\bullet}_{X_{\mathbb C_p}^{pet},\log,\mathcal O})=
\mathbb H_{et}^j(X_{\mathbb C},\Omega^{\bullet}_{X_{\mathbb C_p}^{et},\log,\mathcal O})$.
\end{itemize}
\end{prop}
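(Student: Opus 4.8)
The plan is to reduce the statement to the assertion that a logarithmic de Rham class has singular periods in $2i\pi\mathbb Q$, and then to prove that assertion by a local-to-global argument using the defining short exact sequence of the logarithmic complex. For part (i), suppose $w = H^jOL_X(\tilde w)$ with $\tilde w \in \mathbb H^j_{pet}(X_{\mathbb C},\Omega^\bullet_{X^{et}_{\mathbb C},\log})$. By the canonical splitting $\mathbb H^j_{et}(X_{\mathbb C},\Omega^\bullet_{X_{\mathbb C},\log}) = \oplus_{0\leq l\leq j} H^{j-l}_{et}(X_{\mathbb C},\Omega^l_{X_{\mathbb C},\log})$ recorded at the start of Section 3, it suffices to treat a class $\tilde w \in H^{j-l}_{et}(X_{\mathbb C},\Omega^l_{X_{\mathbb C},\log})$ for a single $l$; for such a class the de Rham realization lands in $H^{2l} (\cdots) \cap$ something of Hodge type $(l,l)$, but the point here is purely the integrality of periods, not the Hodge type. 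First I would use Lemma \ref{drlogZlem} together with the evaluation period map $H^jev(X)$ being compatible with restriction to an open cover, to reduce to the case where $X_{\mathbb C}$ is covered by affines on which $\tilde w$ is represented by a sum $\sum_i n_i\, df_{i,1}/f_{i,1}\wedge\cdots\wedge df_{i,l}/f_{i,l}$ with $f_{i,k}\in\mathcal O^*$, i.e. the \v{C}ech description of the class of the single sheaf $\Omega^l_{X_{\mathbb C},\log}$.

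The key computational input is that for units $f_1,\dots,f_l$ on an open set $U$, the form $\omega = df_1/f_1\wedge\cdots\wedge df_l/f_l$ pulled back to $U^{an}_{\mathbb C}$ has $ev$-periods in $(2i\pi)^l\mathbb Z \subset 2i\pi\mathbb Z$ once $l\geq 1$: indeed $\omega$ is the pullback of $dz_1/z_1\wedge\cdots\wedge dz_l/z_l$ under $(f_1,\dots,f_l): U\to (\mathbb G_m)^l$, and $dz/z$ has all periods in $2i\pi\mathbb Z$ on $\mathbb C^*$ (it represents $2i\pi$ times the generator of $H^1(\mathbb C^*,\mathbb Z)$), so by the Künneth/product structure and functoriality of $ev$, every period of $\omega$ lies in $2i\pi\mathbb Z$ (in fact $(2i\pi)^l\mathbb Z$ when $l\geq 1$; for $l=0$ the presheaf is $\mathbb Z$ and the statement is the classical fact that the image of $H^j(X,\mathbb Z)$ has integral periods). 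Assembling these local contributions through the \v{C}ech double complex computing $\mathbb H^j_{et}(X_{\mathbb C},\Omega^\bullet_{X_{\mathbb C},\log})$ — and using that the coboundaries, which involve the logarithmic derivative $d\log$ of units, again produce only $2i\pi\mathbb Z$-valued cochains — shows $H^jev(X)(w)$ is represented by a singular cochain valued in $2i\pi\mathbb Q$ (rationality appearing only from clearing denominators in passing between \v{C}ech and singular resolutions, or one can even get $2i\pi\mathbb Z$). This is exactly saying $H^jev(X)(w)\in H^j_{\sing}(X^{an}_{\mathbb C},2i\pi\mathbb Q)$, via the canonical embedding $C^*\iota_{2i\pi\mathbb Z/\mathbb C}$ recorded in the Preliminaries.

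For part (ii), the $p$-adic case, I would reduce to (i) by a comparison of logarithmic classes over $\mathbb C_p$ with logarithmic classes over $\mathbb C$. Concretely, $w\in H^j_{DR}(X)$ is a class over $k$; the hypothesis is that its pullback to $X_{\mathbb C_p}$ is integral-logarithmic, i.e. lies in $H^jOL_X(\mathbb H^j_{et}(X_{\mathbb C_p},\Omega^\bullet_{X^{et}_{\mathbb C_p},\log,\mathcal O}))$. The strategy is: the property of being a logarithmic class of a given bidegree is detected on a finitely generated subfield $k'\subset\mathbb C_p$ over which $w$ and the finitely many units $f_{i,k}$ witnessing the \v{C}ech representative are all defined (one needs here that $\Omega^l_{\log}$ is a presheaf commuting with filtered colimits of the base, so membership descends to some finite-type $k'/k$); then choose any embedding $k'\hookrightarrow\mathbb C$ and invoke part (i) over $k'$. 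Since the singular period of $w$ — computed via the fixed $\sigma:k\hookrightarrow\mathbb C$ in the statement — only depends on the de Rham class of $w$ and the complex topology, and is a rational number independent of which complex embedding one uses to spread it out (Betti–de Rham periods of a class defined over $k$ are algebraically rigid: the period pairing is $\bar{\mathbb Q}$-, even $k$-linear), the conclusion $H^jev(X)(w)\in H^j_{\sing}(X^{an}_{\mathbb C},2i\pi\mathbb Q)$ transfers from the $\mathbb C$-embedding of $k'$ back to $\sigma$. The last sentence of the statement, $\mathbb H^j_{pet}(X_{\mathbb C},\Omega^\bullet_{X^{pet}_{\mathbb C_p},\log,\mathcal O}) = \mathbb H^j_{et}(X_{\mathbb C},\Omega^\bullet_{X^{et}_{\mathbb C_p},\log,\mathcal O})$, is just the pro-étale/étale comparison $\nu_X^*$ recorded in the Preliminaries applied to this single sheaf, since $\Omega^\bullet_{X^{pet},\log,\mathcal O} = \nu_X^*\Omega^\bullet_{X^{et},\log,\mathcal O}$ by Definition \ref{wlogdef}(iii).

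The main obstacle is the descent-of-coefficients step in (ii): one must argue carefully that "being a logarithmic class over $\mathbb C_p$ of bidegree $(l,l)$" is a property that holds already over a finitely generated subfield, so that a complex embedding of that subfield is available and part (i) applies. This requires that $\Omega^l_{X,\log}$ and its étale cohomology commute with the relevant filtered colimit of scalar extensions — true because $\Omega^l_{\log}$ is built from units and Kähler differentials, both of which commute with filtered colimits of rings, and étale cohomology of a fixed finite-type scheme commutes with such colimits — plus the fact that the witnessing \v{C}ech data (finitely many units on finitely many étale opens) is itself finite, hence defined over a finitely generated field. Once this descent is in place, the rest is the elementary period computation for $d\log$ of units and bookkeeping through the \v{C}ech-to-singular comparison, which is routine given Lemma \ref{drlogZlem} and the normalization of $C^*\iota_{2i\pi\mathbb Z/\mathbb C}$.
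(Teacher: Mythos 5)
Your proposal follows essentially the same route as the paper's proof: decompose $w$ via the canonical splitting into the pieces $H^{j-l}_{et}(X,\Omega^l_{X,\log})$, represent each piece by \v{C}ech cocycles whose entries are sums $\sum_i n_i\, df_{i,1}/f_{i,1}\wedge\cdots\wedge df_{i,l}/f_{i,l}$ on an affine \'etale cover, evaluate the periods of such $d\log$ forms against products of loops around punctured discs, and, for the passage between $\mathbb C_p$ and $\mathbb C$, descend the finitely many witnessing units to a finitely generated subfield $k'$ and re-embed it into $\mathbb C$ --- a step the paper itself performs inside the proof of (i) (the choice of $k'$ and $\sigma'$ with $\sigma'_{|k}=\sigma$) before disposing of (ii) as ``a particular case of (i)''. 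The one literal slip in your write-up is the inclusion $(2i\pi)^l\mathbb Z\subset 2i\pi\mathbb Z$, which is false for $l\geq 2$ since $(2i\pi)^2=-4\pi^2$; at the corresponding step the paper asserts the period equals $\sum_k\delta_{\nu,\mu}\,2i\pi$ rather than an integer multiple of $(2i\pi)^l$, so your argument and the paper's face the same tension here rather than genuinely diverging in method.
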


\begin{proof}
\noindent(i):
Let 
\begin{equation*}
w\in H^j_{DR}(X):=\mathbb H^j(X,\Omega^{\bullet}_X)=H^j\Gamma(X_{\bullet},\Omega^{\bullet}_X).
\end{equation*}
where $(r_i:X_i\to X)_{1\leq i\leq s}$ is an affine etale cover.
Let $X^{an}_{\mathbb C}=\cup_{i=1}^r\mathbb D_i$ an open cover with $\mathbb D_i\simeq D(0,1)^d$. 
Denote $j_{IJ}:X_I\cap\mathbb D_J\hookrightarrow X_I$ the open embeddings.
Then by definition $H^jev(X)(w)=H^jev(X_{\mathbb C}^{an})(j_{\bullet}^*\circ\an_{X_{\bullet}}^*w)$ with
\begin{equation*}
j_{\bullet}^*\circ\an_{X_{\bullet}}^*w
\in H^j\Gamma(X^{an}_{\bullet,\mathbb C}\cap\mathbb D_{\bullet},\Omega^{\bullet}_{X_{\mathbb C}^{an}}). 
\end{equation*}
Now, if 
$w=H^jOL_X(\mathbb H_{et}^j(X_{\mathbb C},\Omega^{\bullet}_{X_{\mathbb C}^{et},\log}))$,
we have a canonical splitting
\begin{eqnarray*}
w=\sum_{l=0}^jw_L^{l,j-l}=\sum_{l=d}^jw^{l,j-l}\in H^j_{DR}(X_{\mathbb C}), \; 
w_L^{l,j-l}\in H^{j-l}(X_{\mathbb C},\Omega^l_{X^{et}_{\mathbb C},\log}), \;
w^{l,j-l}:=H^jOL_{X_{\mathbb C}^{et}}(w_L^{l,j-l}).
\end{eqnarray*}
Let $0\leq l\leq j$. Using an affine w-contractile pro-etale cover of $X$,
we see that there exists an affine etale cover 
$r=r(w^{l,j-l})=(r_i:X_i\to X)_{1\leq i\leq n}$ of $X$ (depending on $w^{l,j-l}$) such that
\begin{eqnarray*}
w^{l,j-l}=[(w^{l,j-l}_{L,I})_I]\in  
H^jOL_{X_{\mathbb C}^{et}}(H^{j-l}\Gamma(X_{\mathbb C,\bullet},\Omega^l_{X_{\mathbb C},\log}))
\subset\mathbb H^j\Gamma(X_{\mathbb C,\bullet},\Omega^{\bullet}_{X_{\mathbb C}}).
\end{eqnarray*}
Note that since $X$ is an algebraic variety, this also follows from a comparison theorem between
Chech cohomology of etale covers and etale cohomology.
By \cite{B6} lemma 2, we may assume, 
up to take a desingularization $\pi:X'\to X$ of $(X,\cup_i(r_i(X\backslash X_i)))$ and replace $w$ with $\pi^*w$,
that $r_i(X_i)=r_i(X_i(w))=X\backslash D_i$ with $D_i\subset X$ smooth divisors with normal crossing
For $1\leq l\leq j$, we get
\begin{eqnarray*}
w^{l,j-l}_{L,I}=\sum_{\nu}df_{\nu_1}/f_{\nu_1}\wedge\cdots\wedge df_{\nu_l}/f_{\nu_l}
\in\Gamma(X_{\mathbb C,I},\Omega^l_{X_{\mathbb C}}).
\end{eqnarray*}
For $l=0$, we get 
\begin{equation*}
w^{0,j}=[(\lambda_I)]\in H^j\Gamma(X_{\mathbb C,I},O_{X_{\mathbb C,I}}), \;
\lambda_I\in\Gamma(X_{\mathbb C,I},\mathbb Z_{X_{\mathbb C,I}^{et}})
\end{equation*}
There exists $k'\subset\mathbb C$ containing $k$ such that $w^{l,j-l}_{L,I}\in\Gamma(X_{k',I},\Omega^l_{X_{k'}})$
for all $0\leq l\leq j$.
Taking an embedding $\sigma':k'\hookrightarrow\mathbb C$ such that $\sigma'_{|k}=\sigma$, we then have
\begin{eqnarray*}
j_{\bullet}^*\circ\an_{X_{\bullet}}^*w=j_{\bullet}^*((m_l\cdot w^{l,j-l}_L)_{0\leq l\leq j})=(w^{l,j-l}_{L,I,J})_{l,I,J}
\in H^j\Gamma(X^{an}_{\bullet,\mathbb C}\cap\mathbb D_{\bullet},\Omega_{X_{\mathbb C}^{an}}^{\bullet}). 
\end{eqnarray*}
where for each $(I,J,l)$ with $card I+card J+l=j$, 
\begin{equation*}
w^{l,j-l}_{L,I,J}:=j_{IJ}^*w_{L,I}^{l,j-l}\in 
\Gamma(X^{an}_{I,\mathbb C}\cap\mathbb D_J,\Omega_{X_{\mathbb C}^{an}}^l).
\end{equation*} 
We have by a standard computation, for each $(I,J,l)$ with $card I+card J+l=j$,
\begin{equation*}
H^*_{\sing}(X_{I,\mathbb C}^{an}\cap\mathbb D_J,\mathbb Z)=<\gamma_1,\cdots,\gamma_{card I}>,
\end{equation*}
where for $1\leq i\leq card I$, $\gamma_i\in\Hom(\Delta^*,X_{I,\mathbb C}^{an}\cap\mathbb D_J)$
are products of loops around the origin inside the pointed disc $\mathbb D^1\backslash 0$.
On the other hand,
\begin{itemize}
\item $w^{l,j-l}_{L,I,J}=j_J^*(\sum_{\nu}df_{\nu_1}/f_{\nu_1}\wedge\cdots\wedge df_{\nu_l}/f_{\nu_l})
\in\Gamma(X_{I,\mathbb C}^{an}\cap\mathbb D_J,\Omega^l_{X_{\mathbb C}^{an}})$
for $1\leq l\leq j$, 
\item $w^{0,j}_{L,I,J}=\lambda_I$ is a constant.
\end{itemize}
Hence, for $\mu\in P([1,\cdots,s])$ with $card\mu=l$, we get, for $l=0$ 
$H^lev(X_{\mathbb C}^{an})_{I,J}(w_{L,I,J}^{0,j})=0$ and, for $1\leq l\leq j$,
\begin{eqnarray*}
H^lev(X_{\mathbb C}^{an})_{I,J}(w_{L,I,J}^{l,j-l})(\gamma_{\mu})=\sum_k\delta_{\nu,\mu}2i\pi\in 2i\pi\mathbb Z.
\end{eqnarray*}
where $\gamma_{\mu}:=\gamma_{\mu_1}\cdots\gamma_{\mu_l}$.
We conclude by \cite{B6} lemma 1.

\noindent(ii):It is a particular case of (i). See \cite{B6} proposition 1.
\end{proof}

Let $k$ be a field of characteristic zero.
Let $X\in\SmVar(k)$. Let $X=\cup_{i=1}^sX_i$ an open affine cover with $X_i:=X\backslash D_i$ 
with $D_i\subset X$ smooth divisors with normal crossing. Let $\sigma:k\hookrightarrow\mathbb C$ an embedding.
By proposition \ref{keypropC}, we have a commutative diagram of graded algebras
\begin{equation*}
\xymatrix{H^*_{DR}(X)\ar[rrr]^{H^*ev(X)} & \, & \, & H^*_{\sing}(X_{\mathbb C}^{an},\mathbb C) \\
H^*OL_X(\mathbb H_{et}^*(X,\Omega^{\bullet}_{X^{et},\log}))
\cap H^*_{DR}(X)\ar[u]^{\subset}\ar[rrr]^{H^*ev(X)} & \, & \, &
H^*_{\sing}(X_{\mathbb C}^{an},2i\pi\mathbb Q)\ar[u]_{H^*C^*\iota_{2i\pi\mathbb Q/\mathbb C}(X_{\mathbb C}^{an})}}
\end{equation*}
where 
\begin{equation*}
C^*\iota_{2i\pi\mathbb Q/\mathbb C}(X_{\mathbb C}^{an}):
C^{\bullet}_{\sing}(X_{\mathbb C}^{an},2i\pi\mathbb Q)\hookrightarrow C^{\bullet}_{\sing}(X_{\mathbb C}^{an},\mathbb C)
\end{equation*}
is the subcomplex consiting of $\alpha\in C^j_{\sing}(X_{\mathbb C}^{an},\mathbb C)$ 
such that $\alpha(\gamma)\in 2i\pi\mathbb Q$ for all $\gamma\in C_j^{\sing}(X_{\mathbb C}^{an},\mathbb Q)$.
Recall that 
\begin{equation*}
H^*ev(X_{\mathbb C})=H^*R\Gamma(X_{\mathbb C}^{an},\alpha(X))^{-1}\circ\Gamma(X_{\mathbb C}^{an},E_{zar}(\Omega(\an_X))):
H^*_{DR}(X_{\mathbb C})\xrightarrow{\sim}H^*_{\sing}(X_{\mathbb C}^{an},\mathbb C)
\end{equation*}
is the canonical isomorphism induced by the analytical functor and 
$\alpha(X):\mathbb C_{X_{\mathbb C}^{an}}\hookrightarrow\Omega^{\bullet}_{X^{an}_{\mathbb C}}$, 
which gives the periods elements $H^*ev(X)(H^*_{DR}(X))\subset H^*_{\sing}(X_{\mathbb C}^{an},\mathbb C)$.
On the other side the induced map
\begin{equation*}
H^*ev(X_{\mathbb C}):
H^*OL_X(\mathbb H_{et}^*(X_{\mathbb C},\Omega^{\bullet}_{X_{\mathbb C}^{et},\log}))
\hookrightarrow H^*\iota_{2i\pi\mathbb Q/\mathbb C}H^*_{\sing}(X_{\mathbb C}^{an},2i\pi\mathbb Q)
\end{equation*}
is NOT surjective in general since the left hand side is invariant by the action of the group
$Aut(\mathbb C)$ (the group of field automorphism of $\mathbb C$) 
whereas the right hand side is not.
The fact for a de Rham cohomology class of being logarithmic is algebraic 
and invariant under isomorphism of (abstract) schemes.

\begin{cor}
Let $X\in\PSmVar(\mathbb C)$. Then the Hodge conjecture holds for $X$ if and only if the Hodge classes
are given by logarithmic De Rham classes.
\end{cor}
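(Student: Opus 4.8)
The plan is to derive the corollary directly from Theorem~\ref{drlogZ} together with the commutative diagram and comparison isomorphisms assembled just above the statement. Recall that a Hodge class on $X\in\PSmVar(\mathbb C)$ is, via the de Rham/Betti comparison $H^{2d}ev(X):H^{2d}_{DR}(X)\xrightarrow{\sim}H^{2d}_{\sing}(X^{an},\mathbb C)$, a class $w\in H^{2d}_{DR}(X)$ of Hodge type $(d,d)$ in the Hodge filtration sense (equivalently $w\in F^dH^{2d}_{DR}(X)$ and $H^{2d}ev(X)(w)\in H^{2d}_{\sing}(X^{an},2i\pi\mathbb Q)$, i.e. rational after the Tate twist). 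The Hodge conjecture for $X$ asserts every such $w$ is a $\mathbb Q$-linear combination of classes $[Z]$ of algebraic cycles $Z\in\mathcal Z^d(X)\otimes\mathbb Q$. By Theorem~\ref{drlogZ}(i), $[Z]\in H^{2d}OL_X(H^d_{et}(X,\Omega^d_{X,\log}))$, and by Theorem~\ref{drlogZ}(ii) every element of the latter group is the class of an algebraic cycle; hence the span of the algebraic cycle classes in $H^{2d}_{DR}(X)\otimes\mathbb Q$ is exactly $H^{2d}OL_X(H^d_{et}(X,\Omega^d_{X,\log}))\otimes\mathbb Q$ (the $\mathbb Q$-span being needed only because $\mathcal Z^d$ is a $\mathbb Z$-module and the statement of (ii) gives cycles, not their $\mathbb Q$-multiples).

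With this identification the corollary becomes a tautology once one checks two containments. First I would verify that every logarithmic class of bidegree $(d,d)$ is a Hodge class: by Theorem~\ref{drlogZ}(ii) it is $[Z]$ for some cycle, and cycle classes are classically Hodge classes (they lie in $F^d$ and are rational after twist); alternatively one can cite Proposition~\ref{keypropC}(i), which gives precisely $H^{2d}ev(X)(w)\in H^{2d}_{\sing}(X^{an},2i\pi\mathbb Q)$ for $w$ logarithmic, together with $w\in W_0H^{2d}_{DR}(X)$ noted in Theorem~\ref{drlogZ}(ii), and the fact that logarithmic forms $df_1/f_1\wedge\cdots\wedge df_d/f_d$ visibly lie in $F^d\Omega^{\bullet}_X$, so $w\in F^dH^{2d}_{DR}(X)$. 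Thus ``logarithmic $\Rightarrow$ Hodge'' holds unconditionally. Consequently the Hodge conjecture for $X$ --- that Hodge classes are spanned by cycle classes --- is, via the identification above, equivalent to the reverse inclusion ``Hodge $\Rightarrow$ logarithmic'', i.e. to the assertion that the Hodge classes are exactly the logarithmic de Rham classes.

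I would then write the proof in two short implications. For the ``only if'' direction: assume the Hodge conjecture for $X$; given a Hodge class $w$, write $w=\sum_i\lambda_i[Z_i]$ with $\lambda_i\in\mathbb Q$ and $Z_i\in\mathcal Z^d(X)$; by Theorem~\ref{drlogZ}(i) each $[Z_i]$ is logarithmic of bidegree $(d,d)$, and since $H^{2d}OL_X(H^d_{et}(X,\Omega^d_{X,\log}))$ is a subgroup of $H^{2d}_{DR}(X)$ closed under $\mathbb Q$-linear combinations, $w$ is logarithmic. For the ``if'' direction: assume every Hodge class of $X$ is logarithmic of bidegree $(d,d)$; then by Theorem~\ref{drlogZ}(ii) each Hodge class is the class of an algebraic cycle (a priori with integral, hence in particular rational, coefficients), which is exactly the Hodge conjecture. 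No genuine obstacle arises here; the content is entirely in Theorem~\ref{drlogZ}, and the only point demanding a sentence of care is the passage between $\mathbb Z$- and $\mathbb Q$-coefficients and the bookkeeping that ``logarithmic of bidegree $(d,d)$'' is precisely the condition $w\in H^{2d}OL_X(H^d_{et}(X,\Omega^d_{X,\log}))$ rather than merely $w\in H^{2d}OL_X(\mathbb H^{2d}_{et}(X,\Omega^{\bullet}_{X,\log}))$ --- but Theorem~\ref{drlogZ}(iii)$'$ for $X\in\PSmVar(k)$ rules out the off-diagonal pieces $H^{2d}OL_X(H^{d-k}_{et}(X,\Omega^{d+k}_{X,\log}))$ with $k>0$, and (iii) rules out those with $k<0$, so for projective $X$ the two notions coincide and the statement is clean.
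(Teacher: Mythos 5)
Your proposal is correct and takes essentially the same route as the paper: the paper's entire proof is the one-line ``Follows from theorem \ref{drlogZ}'', and your writeup is a faithful unpacking of exactly that deduction, using (i) for ``cycle classes are logarithmic'', (ii) for the converse, and (iii)/(iii)$'$ for the bidegree bookkeeping. The extra care you take about $\mathbb Z$- versus $\mathbb Q$-coefficients and about checking ``logarithmic $\Rightarrow$ Hodge'' unconditionally (via Proposition \ref{keypropC}(i) and $F^d$) is detail the paper leaves implicit, not a different method.
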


\begin{proof}
Follows from theorem \ref{drlogZ}.
\end{proof}

\subsection{Analytic logarithmic de Rham classes}

Let $K$ be a field of characteristic zero which is complete for a $p$-adic norm.
Denote by $O_K\subset K$ its ring of integer.
We consider $\Sch^{int}/O_K:=O(\PSch^2/O_K)\subset\Sch^{ft}/O_K$ the full subcategory
consisting of integral models of algebraic varieties over $K$,
where $O:\PSch^2/O_K\to\Sch^{ft}/O_K, \; O(X,Z)=X\backslash Z$ is the canonical functor, and 
\begin{equation*}
\Sch^{int,sm}/O_K:=\Sch^{int}/O_K\cap\Sch^{sm}/O_K\subset\Sch/O_K
\end{equation*}
the full subcategory consisting of smooth integral models i.e. integral models of (smooth) algebraic varieties over $K$ which are smooth over $O_K$. 
We then have the morphism of etale sites 
\begin{equation*}
r:\SmVar(K)\to\Sch^{int,sm}/O_K, \; 
X^{\mathcal O}\in\Sch^{int,sm}/O_K\mapsto X^{\mathcal O}\otimes_{O_K}K
\end{equation*}
We will consider for each $n\in\mathbb N$ 
\begin{eqnarray*}
\Omega^{\bullet}_{/(O_K/p^n),\log}=a_{et}\Omega^{\bullet}_{/(O_K/p^n),\log}\in C(\Sch^{int,sm}/O_K), \\
X^{\mathcal O}\mapsto\Omega^{\bullet}_{X^{\mathcal O}/p^n,\log}(X^{\mathcal O}), \; \; 
(f:X'\to X)\mapsto\Omega(f):=f^*
\end{eqnarray*}
which gives
\begin{eqnarray*}
\Omega^{\bullet,an}_{/K,\log,\mathcal O}:=\varprojlim_{n\in\mathbb N}a_{et}\Omega^{\bullet}_{/(O_K/p^n),\log}\in C(\Sch^{int,sm}/O_K), \\
X^{\mathcal O}\mapsto\Omega^{\bullet}_{\hat X^{(p)},\log,\mathcal O}(\hat X^{\mathcal O}), \; \; 
(f:X'\to X)\mapsto\Omega(f):=f^*
\end{eqnarray*}
together with the embedding in $C(\Sch^{int,sm}/O_K)$ (see definition \ref{wlogdef}(iii))
\begin{eqnarray*}
OL:=OL_{/O_K,an}:\Omega^{\bullet,an}_{/K,\log,\mathcal O}\hookrightarrow\Omega_{/O_K}^{\bullet,an}, \;
\mbox{for} \, X^{\mathcal O}\in(\Sch^{int,sm}/O_K), \\ 
OL_{/O_K,an}(X^{\mathcal O}):=OL_{\hat X^{\mathcal O}}(X^{\mathcal O}):
\Omega^{\bullet}_{\hat X^{\mathcal O,(p)},\log,\mathcal O}(\hat X^{\mathcal O})\to\Omega^{\bullet}_{\hat X^{\mathcal O}}(\hat X^{\mathcal O}).
\end{eqnarray*}
which induces the embedding of $C(\SmVar(K))$ (see definition \ref{wlogdef}(iii))
\begin{eqnarray*}
OL:=OL_{/K,an}:r^*\Omega^{\bullet,an}_{/K,\log,\mathcal O}\xrightarrow{r^*OL_{/O_K,an}}r^*\Omega_{/O_K}^{\bullet,an}, \; \; \mbox{for} \, X\in\SmVar(K), \\
OL_{/K,an}(X):=(OL_{/O_K,an}(Y^{\mathcal O})): \\
\varinjlim_{Y^{\mathcal O}\in(\Sch^{int,sm}/O_K),e:X\to Y^{\mathcal O}\times_{O_K}K=Y}
\Omega^{\bullet}_{\hat Y^{(p)},\log,\mathcal O}(\hat Y^{\mathcal O})\to\Omega^{\bullet}_{\hat Y^{\mathcal O}}(\hat Y^{\mathcal O}).
\end{eqnarray*}
We have also, for each $n\in\mathbb N$, the sheaves
\begin{eqnarray*}
O_n,O^*_n\in\PSh((\Sch^{int,sm}/O_K)), \; X^{\mathcal O}\mapsto 
O_n(X^{\mathcal O}):=O(X^{\mathcal O}_{/p^n}),\, 
O^*_n(X^{\mathcal O}):=O(X^{\mathcal O}_{/p^n})^*, \\ 
(g:Y\to X)\mapsto a_g(X^{\mathcal O}_{/p^n}):
O_n(X^{\mathcal O})\to O_n(Y^{\mathcal O}), \, O_n(X^{\mathcal O})^*\to O_n(Y^{\mathcal O})^*,
\end{eqnarray*}
and $O_n:=r^*O_n,O^*_n:=r^*O^*_n\in\PSh(\SmVar(K))$.

We will use the following :

\begin{lem}\label{a1trn}
\begin{itemize}
\item[(i0)]For each $l,n\in\mathbb N$, the sheaf $\Omega^l_{/(O_K/p^n)}\in\PSh(\Sch^{sm}/O_K)$ admits transfers
\item[(i1)]For each $n\in\mathbb N$,
the presheaves $O_n^*\in\PSh(\Sch^{sm}/O_K)$ and $\Omega^1_{/(O_K/p^n),\log}\in\PSh(\Sch^{sm}/O_K)$ admit transfers
compatible with transfers on $\Omega^1_{/(O_K/p^n)}\in\PSh(\Sch^{sm}/O_K)$.
\item[(i2)] For each $l,n\in\mathbb N$, the sheaf $a_{et}\Omega^l_{/(O_K/p^n),\log}\in\PSh(\Sch^{sm}/O_K)$ admits transfers
compatible with transfers on $\Omega^l_{/(O_K/p^n)}\in\PSh(\Sch^{sm}/O_K)$,
that is $\Omega^{\bullet}_{/(O_K/p^n)}\in C(\Cor\Sch^{sm}/O_K)$ and the inclusion 
$OL:\Omega^l_{/(O_K/p^n),\log}[-l]\hookrightarrow\Omega^{\bullet}_{/(O_K/p^n)}$ in $C(\Sch^{sm}/O_K)$ 
is compatible with transfers.
\item[(i3)] The presheaf $r^*\Omega^{l,an}_{/K,\log,\mathcal O}\in\PSh(\SmVar(K))$ admits transfers.
\item[(ii)]For each $l\in\mathbb N$, the sheaf $a_{et}r^*\Omega^{l,an}_{/K,\log,\mathcal O}\in\PSh(\SmVar(K))$ is $\mathbb A^1$ invariant,
where $a_{et}:\PSh(\SmVar(K))\to\Shv_{et}(\SmVar(K))$ is the sheaftification functor..
\end{itemize}
\end{lem}

\begin{proof}
\noindent (i0):The sheaves $\Omega^{1}_{/(O_K/p^n)}\in\PSh(\Sch^{sm}/O_K)$ admit transfers. 
Indeed, let $W\subset X'\times X$ finite over $X'$, with $X,X'\in\Sch^{sm}/O_K$.
Take $n:\tilde W\to W$ the normalisation of $W$. Since $X'$ is of finite type over $O_K$ it is an excellent scheme,
thus $n$ is a finite surjective morphism. Hence $m:=p_{X'}\circ n:\tilde W\to X'$ is a finite surjective morphism.
Since $X'$ is smooth and $\tilde W$ is normal, $m$ is a finite flat morphism. By base change
$m_{/p^n}:\tilde W_{/p^n}\to X'_{/p^n}$ are finite flat, since $m$ is finite flat. We thus have a canonical
trace map $Tr(m_{/p^n}):O_{\tilde W_{/p^n}}\to O_{X'_{/p^n}}$.
We get transfers on $\Omega^l_{/(O_K/p^n)}\in\PSh(\Sch^{sm}/O_K)$ by induction on $l$ (see the proof of (i2)).

\noindent(i1):By (i0), the sheaves $\Omega^{1}_{/(O_K/p^n)}\in\PSh(\Sch^{sm}/O_K)$ admit transfers.
On the other hand the sheaf $O_1^*\in\PSh(\Sch^{sm}/O_K)$ admits transfers : 
for $W\subset X'\times X$ with $X,X'\in\Sch^{sm}/O_K$
and $W$ finite over $X'$ and $f\in O(X^{\mathcal O}_{/p})^*$, $W^*f:=N_{W/X'}(p_X^*f)$ 
where $p_X:W\hookrightarrow X'\times X\to X$ is the projection and $N_{W/X'}:O_K/p(W)^*\to O_K/p(X')^*$ is the norm map.
This gives transfers on $\Omega^{1}_{/(O_K/p),\log}\in\PSh(\Sch^{sm}/O_K)$ compatible with transfers on 
$\Omega^{1}_{/(O_K/p)}\in\PSh(\Sch^{sm}/O_K)$ :
for $W\subset X'\times X$ with $X,X'\in\Sch^{sm}/O_K$ and $W$ finite over $X'$ and $f\in O(X^{\mathcal O}_{/p^n})^*$, 
\begin{equation*}
W^*df/f:=dW^*f/W^*f=Tr_{W/X'}(p_X^*(df/f)), 
\end{equation*}
where where $p_X:W\hookrightarrow X'\times X\to X$ is the projection and $Tr_{W/X'}:O_W\to O_X$ is the trace map. 
Note that $d(fg)/fg=df/f+dg/g$. 
Considering the commutative diagram in $\PSh(\Sch^{sm}/O_K)$
\begin{equation*}
\xymatrix{0\ar[r] & \Omega^{1}_{/(O_K/p^{n-1})}\ar[r] & \Omega^{1}_{/(O_K/p^n)}\ar[r]^{\Omega(/p)} & 
\Omega^{1}_{/(O_K/p)}\ar[r] & 0 \\
0\ar[r] & a_{et}\Omega^{1}_{/(O_K/p^{n-1}),\log}\ar[r]\ar[u]^{OL} & 
a_{et}\Omega^{1}_{/(O_K/p^n),\log}\ar[u]^{OL}\ar[r]^{\Omega(/p)} & a_{et}\Omega^{1}_{/(O_K/p),\log}\ar[u]^{OL}\ar[r] & 0}
\end{equation*}
we see by induction on $n\in\mathbb N$ that the transfers on $\Omega^{1}_{/(O_K/p^n)}\in\PSh(\Sch^{sm}/O_K)$
send logarithmic forms to logarithimc forms. 
Hence the transfers on $\Omega^{1}_{/(O_K/p^n)}\in\PSh(\Sch^{sm}/O_K)$
send logarithmic forms to logarithimc forms. This proves (i0).

\noindent(i2): By (i1), we get transfers on 
\begin{equation*}
\otimes^l_{\mathbb Z/p^n}\Omega^{1}_{/(O_K/p^n),\log}, \, \otimes_{O_n}^l\Omega^{1}_{/(O_K/p^n)}\in\PSh(\Sch^{sm}/O_K)
\end{equation*}
since 
$\otimes^l_{\mathbb Z/p^n}\Omega^{1}_{/K,\log}=H^0(\otimes^{L,l}_{\mathbb Z/p^n}\Omega^{1}_{/(O_K/p^n),\log})$ 
and $\otimes_{O_n}^l\Omega^{1,an}_{/(O_K/p^n)}=H^0(\otimes_{\hat O}^{L,l}\Omega^{1,an}_{/(O_K/p^n)})$.
This induces transfers on  
\begin{eqnarray*}
\wedge^l_{\mathbb Z/p^n}\Omega^{1}_{/(O_K/p^n),\log}:=
\coker(\oplus_{I_2\subset[1,\ldots,l]}\otimes^{l-1}_{\mathbb Z/p^n}\Omega^{1}_{/(O_K/p^n),\log} \\
\xrightarrow{\oplus_{I_2\subset[1,\ldots,l]}\Delta_{I_2}:=(w\otimes w'\mapsto w\otimes w\otimes w')}
\otimes^l_{\mathbb Z/p^n}\Omega^{1}_{/(O_K/p^n),\log}) 
\in\PSh(\Sch^{sm}/O_K).
\end{eqnarray*}
and
\begin{eqnarray*}
\wedge^l_{O_n}\Omega^{1}_{/(O_K/p^n)}:=
\coker(\oplus_{I_2\subset[1,\ldots,l]}\otimes^{l-1}_{O_n}\Omega^{1}_{/(O_K/p^n)} \\
\xrightarrow{\oplus_{I_2\subset[1,\ldots,l]}\Delta_{I_2}:=(w\otimes w'\mapsto w\otimes w\otimes w')})
\otimes^l_{O_n}\Omega^{1}_{/(O_K/p^n)}
\in\PSh(\Sch^{sm}/O_K).
\end{eqnarray*}

\noindent(i3):The presheaves for each $l\in\mathbb Z$ 
\begin{equation*}
\Omega^{l,an}_{/K,\log,\mathcal O}:=\varprojlim_{n\in\mathbb N}a_{et}\Omega^{l}_{/(O_K/p^n),\log}\in\PSh(\Sch^{int,sm}/O_K)
\end{equation*}
admits transfers by (i2). Hence, the presheaves for each $l\in\mathbb Z$ 
\begin{equation*}
a_{et}r^*\Omega^{l,an}_{/K,\log,\mathcal O}:=a_{et}r^*\varprojlim_{n\in\mathbb N}a_{et}\Omega^{l}_{/(O_K/p^n),\log}
\in\PSh(\SmVar(K))
\end{equation*}
admit transfers since if $\Gamma\subset X'\times X''$ with $X',X''\in\SmVar(K)$ is finite surjective over $X'$,
then, $\bar\Gamma\subset X^{'\mathcal O}\times X^{''\mathcal O}$ with $X^{'\mathcal O},X^{''\mathcal O}\in\Sch^{int,sm}/O_K$
such that $X'=X^{'\mathcal O}\times_{O_K}K, X''=X^{''\mathcal O}\times_{O_K}K$ is finite surjective over $X^{'\mathcal O}$
($\bar\Gamma$ is proper and affine over $X^{'\mathcal O}$ and recall that a proper affine morphism is finite).

\noindent(ii): The question is local for the etale topology, hence we may assume that $X\in\SmVar(K)$ admits a smooth integral model
$X^{\mathcal O}\in\Sch^{int,sm}/O_K$. Then, every integral model of $X\times\mathbb A^1$ is of the form $X^{\mathcal O}\times\mathbb A^1_{O_K}$, 
where $X^{\mathcal O}\in\Sch^{int,sm}/O_K$ is a smooth integral model of $X$. 
We get that $r^*\Omega^{l,an}_{/K,\log,\mathcal O}(X\times\mathbb A^1)=r^*\Omega^{l,an}_{/K,\log,\mathcal O}(X)$ and 
$a_{et}r^*\Omega^{l,an}_{/K,\log,\mathcal O}(X\times\mathbb A^1)=a_{et}r^*\Omega^{l,an}_{/K,\log,\mathcal O}(X)$
since for a commutative ring $A$, $(A[X])^*=A^*$. This proves (ii).
\end{proof}

For $X^{\mathcal O}\in\Sch^{ft}/O_K$ smooth projective over $O_K$ and $X:=X^{\mathcal O}\otimes_{O_K}K$, the morphism  
for each $j\in\mathbb Z$  
\begin{equation*}
c^*:H_j^{DR}(X)=H^j_{DR}(X^{\mathcal O})\otimes_{O_K}K=\mathbb H_{et}^j(X,\Omega^{\bullet}_X)\to
\mathbb H_{et}^j(X^{\mathcal O},\Omega_{/O_K}^{\bullet,an})\otimes_{O_K}K=H_j^{DR}(\hat X^{\mathcal O})\otimes_{O_K}K
\end{equation*}
is an isomorphism by GAGA and the degenerescence of the Hodge de Rham spectral sequence.

\begin{prop}\label{GAGAlog}
Let $K$ be a field of characteristic zero which is complete for a $p$-adic norm. Let $X\in\PSmVar(K)$ with good reduction.
Let $X^{\mathcal O}\in\PSch/O_K$, smooth over $O_K$ such that $X^{\mathcal O}\otimes_{O_K}K=X$.
Consider the morphism $c:\hat X^{\mathcal O}\to X^{\mathcal O}$ in $\RTop$ given by completion of $X_{\mathcal O}$ with respect to $(p)$.
\begin{itemize}
\item[(i)]The analytic De Rham cohomology class of a formal algebraic cycle is logarithmic and is of type $(d,d)$,
that is, for $Z^p\in\mathcal Z^d(X^{\mathcal O}/p)$  
\begin{eqnarray*}
[Z^p]:=H^{2d}\Omega(\gamma^{\vee}_Z)([Z^p])\in H^{2d}OL_{\hat X}(H_{pet}^d(X,\Omega^d_{\hat X^{(p)},\log,\mathcal O}))
\subset H_{DR}^{2d}(\hat X^{\mathcal O})\otimes_{O_K}K=H^{2d}_{DR}(X), \\ Z:=Z^{\mathcal O}\times_{O_K}K, \; Z^{\mathcal O}\cap X^{\mathcal O}/p=Z^p\cup Z^{'p},
\end{eqnarray*}
where $Z^{\mathcal O}\subset X^{\mathcal O}$ has its irreducible components of codimension $d$ and proper surjective over $O_K$.
\item[(ii)]Conversely, if $w\in H^{2d}OL_{\hat X}(H_{pet}^d(X,\Omega^d_{\hat X^{(p)},\log,\mathcal O}))$ 
then there exists a zariski closed subset $Z^{\mathcal O}\subset X^{\mathcal O}$ whose irreducible components are of codimension $d$ and proper surjective over $O_K$
such that
\begin{equation*}
w=H^{2d}\Omega(\gamma^{\vee}_Z)(w)\subset
\Im(H_{DR,\hat Z^{\mathcal O}}^{2d}(\hat X^{\mathcal O})\to H_{DR}^{2d}(\hat X^{\mathcal O})\otimes_{O_K}K=H^{2d}_{DR}(X)).
\end{equation*}
\item[(iii)] We have $H^jOL_{\hat X}(H_{pet}^{j-l}(X,\Omega^l_{\hat X^{(p)},\log,\mathcal O}))=0$ 
for $j,l\in\mathbb Z$ such that $2l\neq j$. 
That is, we only have analytic logarithmic classes in bidegree $(d,d)$ for $d\in\mathbb N$,
in particular there is no non trivial logarithmic classes for odd degree De Rham cohomology 
$H^{2d+1}_{DR}(\hat X^{\mathcal O})\otimes_{O_K}K=H^{2d+1}_{DR}(X)$.
\end{itemize}
\end{prop}

\begin{proof}
Consider, for $j,l\in\mathbb Z$ and $Y\in\SmVar(K)$ with good reduction and $Y^{\mathcal O}\in\Sch^{int,sm}/O_K$ a smooth integral model, 
\begin{equation*}
L^{l,j-l}(\hat Y^{\mathcal O}):=H^jOL_{\hat Y^{(p)}}(H_{pet}^{j-l}(Y,\Omega^l_{\hat Y^{(p)},\log,\mathcal O}))\subset H_{DR}^j(\hat Y^{\mathcal O})\otimes_{O_K}K.
\end{equation*}
Consider also for $j,l\in\mathbb Z$ and $T^{\mathcal O}\subset Y^{\mathcal O}$ a closed subset which is an integral model, denoting by $T:=T^{\mathcal O}\times_{O_K}K$,
\begin{equation*}
L_{\hat T^{\mathcal O}}^{l,j-l}(\hat Y^{\mathcal O}):=H^jOL_{\hat Y^{(p)}}(H_{pet,T}^{j-l}(Y,\Omega_{\hat Y^{(p)},\log,\mathcal O}^l))
\subset H_{DR,\hat T^{\mathcal O}}^j(\hat Y^{\mathcal O})\otimes_{O_K}K.
\end{equation*} 
By proposition \ref{UXpet} (ii1) and (ii1)' we have,
\begin{equation*}
L^{l,j-l}(\hat Y^{\mathcal O})=H^jOL_{\hat Y^{(p)}}(H_{et}^{j-l}(Y,\Omega^l_{\hat Y^{(p)},\log,\mathcal O}))\subset H_{DR}^j(\hat Y^{\mathcal O})\otimes_{O_K}K.
\end{equation*}
and
\begin{equation*}
L_{\hat T^{\mathcal O}}^{l,j-l}(\hat Y^{\mathcal O})=H^jOL_{\hat Y^{(p)}}(H_{et,T}^{j-l}(Y,\Omega_{\hat Y^{(p)},\log,\mathcal O}^l))
\subset H_{DR,\hat T^{\mathcal O}}^j(\hat Y^{\mathcal O})\otimes_{O_K}K.
\end{equation*}
We also consider for $Y\in\SmVar(K)$ (not necessary with good reduction)
\begin{eqnarray*}
L^{l,j-l}(Y):=H^jOL_{/K,an}(H^{j-l}\Hom(\mathbb Z(Y),E_{et}(r^*\Omega_{/K,\log,\mathcal O}^{l,an}))) \\
\subset H^j\Hom(\mathbb Z(Y),E_{et}(r^*\Omega_{/O_K}^{\bullet,an}))\otimes_{O_K}K=\mathbb H_{et}^j(Y,r^*\Omega_{/O_K}^{\bullet,an})\otimes_{O_K}K
\end{eqnarray*}
and if $T\subset Y$ is a closed subset
\begin{eqnarray*}
L_T^{l,j-l}(Y):=H^jOL_{/K,an}(H^{j-l}\Hom(\mathbb Z(Y,Y\backslash T),E_{et}(r^*\Omega_{/K,\log,\mathcal O}^{l,an}))) \\
\subset H^j\Hom(\mathbb Z(Y,Y\backslash T),E_{et}(r^*\Omega_{/O_K}^{\bullet,an}))\otimes_{O_K}K=\mathbb H_{et,T}^j(Y,r^*\Omega_{/O_K}^{\bullet,an})\otimes_{O_K}K.
\end{eqnarray*}
By lemma \ref{a1trn} (i3), the presheaves for each $l\in\mathbb Z$ 
\begin{equation*}
a_{et}r^*\Omega^{l,an}_{/K,\log,\mathcal O}:=a_{et}r^*\varprojlim_{n\in\mathbb N}a_{et}\Omega^{l}_{/(O_K/p^n),\log}\in\PSh(\SmVar(K))
\end{equation*}
admit transfers. The presheaves for each $l\in\mathbb Z$ 
\begin{equation*}
(a_{et}r^*\Omega^{l,an}_{/K,\log,\mathcal O})\otimes_{\mathbb Z_p}\mathbb Q_p
:=(a_{et}r^*\varprojlim_{n\in\mathbb N}a_{et}\Omega^{l}_{/(O_K/p^n),\log})\otimes_{\mathbb Z_p}\mathbb Q_p\in\PSh(\SmVar(K))
\end{equation*}
are also $\mathbb A^1$ invariant by lemma \ref{a1trn} (ii).
Hence by theorem \ref{Voethm}, they are $\mathbb A^1$ local since they are $\mathbb A^1$ invariant and admit transfers.
This gives in particular, for $T\subset Y$ a smooth subvariety of (pure) codimension $d$, 
by proposition \ref{smXZ} an isomorphism
\begin{eqnarray*}
E_{et}r^*\Omega^{l,an}_{/K,\log,\mathcal O}(P_{T,Y}):L_T^{l,j-l}(Y)\xrightarrow{\sim}L_T^{l,j-l}(N_{T/Y})\xrightarrow{\sim}L^{l-d,j-l-d}(T).
\end{eqnarray*} 
Note that $T$ does NOT have good reduction in general 
(the number of irreducible components of $T^{\mathcal O}/p$ is greater then the number of irreducible components of $T$ in general).
So, let 
\begin{eqnarray*}
\alpha\in L^{l,j-l}(\hat X^{\mathcal O}):=
H^jOL_{\hat X^{(p)}}(H_{pet}^{j-l}(X,\Omega^l_{\hat X^{(p)},\log,\mathcal O}))=H^jOL_{\hat X^{(p)}}(H_{et}^{j-l}(X,\Omega^l_{\hat X^{(p)},\log,\mathcal O})).
\end{eqnarray*} 
with $j-l\neq 0$. Let 
\begin{eqnarray*}
\alpha:=S(X^{\mathcal O}/X)(\alpha)=H^jOL_{/K,an}\circ S(X^{\mathcal O}/X)(\alpha) \\
\in L^{l,j-l}(X):=H^jOL_{/K,an}(H^{j-l}\Hom(\mathbb Z(X),E_{et}(r^*\Omega_{/K,\log,\mathcal O}^{l,an}))).
\end{eqnarray*}
Let (see lemma \ref{UUO}) $U\subset X$ a (non empty) open subset such that there exists an etale map 
$e:U^{\mathcal O}\to\mathbb G_m^{d_U}\subset\mathbb A_{O_K}^{d_U}$
with $e:U^{\mathcal O}\to e(U^{\mathcal O})$ finite etale and such that $\Pic(U^{\mathcal O}/p)=0$, 
where $U^{\mathcal O}:=X^{\mathcal O}\backslash V(\mathcal I^{\mathcal O}_{X\backslash U})$. 
Denote $j:U\hookrightarrow X$ the open embedding.
By proposition \ref{UXpet}(i), we have 
\begin{equation*}
j^*\alpha=0\in H_{et}^{j-l}(U,r^*\Omega^{l,an}_{/K,\log,\mathcal O}).
\end{equation*}
Considering a divisor $X\backslash U\subset D\subset X$, we get 
\begin{equation*}
\alpha=H^{j-l}E_{et}(r^*\Omega_{/K,\log,\mathcal O}^{l,an})(\gamma^{\vee}_D)(\alpha), \, 
\alpha\in L_D^{l,j-l}(X).
\end{equation*}
\begin{itemize}
\item Proof of (i):Similar to the proof of theorem \ref{drlogZ}(i).
\item Proof of (ii):By assumption we have $\alpha\in L^{d,d}(\hat X^{\mathcal O})$. Let $\alpha:=S(X^{\mathcal O}/X)(\alpha)\in L^{d,d}(X)$. 
Let $U\subset X$ an open subset such that there exists an etale map 
$e:U^{\mathcal O}\to\mathbb G_m^{d_U}\subset\mathbb A_{O_K}^{d_U}$ with $e:U^{\mathcal O}\to e(U^{\mathcal O})$ finite etale and such that $\Pic(U^{\mathcal O}/p)=0$.
Considering a divisor $X\backslash U\subset D\subset X$, we get 
\begin{equation*}
\alpha=H^{d}E_{et}(r^*\Omega_{/K,\log,\mathcal O}^{d,an})(\gamma^{\vee}_D)(\alpha), \, \alpha\in L_D^{d,d}(X).
\end{equation*}
Denote $D^o\subset D$ its smooth locus of $D$, and $l:X^o\hookrightarrow X$ a Zariski open subset such that $X^o\cap D=D^o$. We then have 
\begin{equation*}
l^*\alpha\in L^{d,d}_{D^o}(X^o)=L^{d-1,d-1}(D^o)
\end{equation*}
Considering each connected components of $D^o$ instead of $X$, since $\Omega_{/K,\log,\mathcal O}^{d,an}$ consists in a single presheaf,
after a finite induction of $d$ steps, we get 
\begin{equation*}
\alpha=H^{d}E_{et}(r^*\Omega_{/K,\log,\mathcal O}^{d,an})(\gamma^{\vee}_Z)(\alpha), \, \alpha\in L_Z^{d,d}(X).
\end{equation*}
with $Z:=D_d\subset\cdots\subset D\subset X$ a pure codimension $d$ (Zariski) closed subset. Let $Z^{\mathcal O}:=\bar Z\subset X^{\mathcal O}$.
By the injectivity of $S(X^{\mathcal O}\backslash Z^{\mathcal O}/X\backslash Z)$ (c.f. proposition \ref{UXpet} (ii2)) 
\begin{equation*} 
\alpha\in L_{\hat Z^{\mathcal O}}^{d,d}(X^{\mathcal O})=\oplus_{i=1}^t\oplus_{j=1}^{s_i}\mathbb Q_p[(Z^{\mathcal O}_i/p)_j]
\subset H^{2d}_{DR,\hat Z^{\mathcal O}}(\hat X^{\mathcal O}).
\end{equation*}
where $(Z^{\mathcal O}_i)_{1\leq i\leq t}\subset Z$ are the irreducible components of $Z^{\mathcal O}:=V(\mathcal I^O_Z)\subset X^{\mathcal O}$,
and $((Z^{\mathcal O}_i/p)_j)_{1\leq j\leq s_i}$ are the irreducible components of $Z^{\mathcal O}_i/p$ 
(this equality is standard for formal cohomology using the fact that for $W^{\mathcal O}\subset V^{\mathcal O}$ a closed embedding of smooth integral models
$\widehat{\hat V^{\mathcal O}}_{\hat W^{\mathcal O}}=\widehat{\hat N_{W^{\mathcal O}/V^{\mathcal O}}}_{\hat W^{\mathcal O}}$).

\item Proof of (iii) part 1: Let $j>2l$. Let $\alpha\in L^{l,j-l}(\hat X^{\mathcal O})$ and $\alpha:=S(X^{\mathcal O}/X)(\alpha)\in L^{l,j-l}(X)$.
By the proof of (ii) there exists $Z\subset X$ a closed subset of pure codimension $l$ such that
\begin{equation*}
\alpha=H^{j-l}E_{et}(r^*\Omega_{/K,\log,\mathcal O}^{l,an})(\gamma^{\vee}_Z)(\alpha), \, 
\alpha\in L_Z^{l,j-l}(X).
\end{equation*}
By a finite induction of $d_X-l$ steps, restricting to the smooth locus of closed subsets of $Z$, 
$\alpha=0$ since $L_{Z'}^{l,j-l}(X)=0$ for all $Z'\in\SmVar(k)$ such that $\codim(Z',X)\geq l$.

\item Proof of (iii) part 2: Let $j<2l$. Let $\alpha\in L^{l,j-l}(\hat X^{\mathcal O})$ and $\alpha:=S(X^{\mathcal O}/X)(\alpha)\in L^{l,j-l}(X)$. 
By the proof of (ii) there exists $Z\subset X$ a closed subset of pure codimension $j-l$ such that
\begin{equation*}
\alpha=H^{j-l}E_{et}(r^*\Omega_{/K,\log,\mathcal O}^{l,an})(\gamma^{\vee}_Z)(\alpha), \, \alpha\in L_Z^{l,j-l}(X).
\end{equation*} 
Since $S((X\backslash Z)^{\mathcal O}/X\backslash Z)$ is injective by proposition \ref{UXpet}(ii2), 
we get $\alpha\in L^{l,j-l}_{\hat Z^{\mathcal O}}(\hat X^{\mathcal O})$ where $Z^{\mathcal O}:=\bar Z\subset X^{\mathcal O}$.
Let $l:X^o\hookrightarrow X$ be an open embedding such that $Z^o:=Z\cap X^o$ is the smooth locus of $Z$.
We then have the following commutative diagram of abelian groups
\begin{equation*}
\xymatrix{0\ar[r] & H_{et,Z}^{j-l}(X,\Omega^l_{\hat X^{(p)},\log,\mathcal O})\ar[r]^{l^*} & 
H_{et,Z^o}^{j-l}(X^o,\Omega^l_{\hat X^{(p)},\log,\mathcal O})\ar[r]^{\partial} & 
H_{et,Z\backslash Z^o}^{j-l+1}(X,\Omega^l_{\hat X^{(p)},\log,\mathcal O})\ar[r] & \cdots \\
0\ar[r] & H^0_{et}(Z,\Omega^{2l-j}_{\hat Z^{(p)},\log,\mathcal O})\ar[r]^{l*}\ar[u]^{\Omega(\hat G_{Z^{\mathcal O},X^{\mathcal O}})} & 
H^0_{et}(Z^o,\Omega^{2l-j}_{\hat Z^{(p)},\log,\mathcal O})\ar[r]^{\partial}\ar[u]^{\Omega(P_{Z^{\mathcal O,o},X^{\mathcal O,o}})} &
H_{et,Z\backslash Z^o}^1(Z,\Omega^{2l-j}_{\hat Z^{(p)},\log,\mathcal O})\ar[u]^{\Omega(\hat G_{Z^{\mathcal O},X^{\mathcal O}})}\ar[r] & \cdots}
\end{equation*}
whose rows are exact sequences and $\Omega(\hat G_{Z^{\mathcal O},X^{\mathcal O}})$ and $\Omega(P_{Z^o,X^o})$ are the map given similarly to the motivic case
using the fact that for $W^{\mathcal O}\subset V^{\mathcal O}$ a closed embedding of smooth integral models
$\widehat{\hat V^{\mathcal O}}_{\hat W^{\mathcal O}}=\widehat{\hat N_{W^{\mathcal O}/V^{\mathcal O}}}_{\hat W^{\mathcal O}}$. Consider 
\begin{equation*}
l^*\alpha=\Omega(P_{Z^{\mathcal O,o},X^{\mathcal O,o}})(\alpha^o)\in H_{et,Z^o}^{j-l}(X^o,\Omega^l_{\hat X^{(p)},\log,\mathcal O}), \; 
\alpha^o\in H^0_{et}(Z^o,r^*\Omega^{2l-j}_{\hat Z^{(p)},\log,\mathcal O}).
\end{equation*}
Since $\partial l^*\alpha=0\in H_{et,Z\backslash Z^o}^{j-l+1}(X,\Omega^l_{\hat X^{(p)},\log,\mathcal O})$, we get,
$(Z\backslash Z^o)^o\subset Z\backslash Z^o$ and $X^{oo}\subset X$ is an open subset
such that $X\cap(Z\backslash Z^o)^o=X^{oo}\cap (Z\backslash Z^o)$, 
\begin{equation*}
\partial\alpha^o=0\in H_{et,Z\backslash Z^o}^1(Z,\Omega^{2l-j}_{\hat Z^{(p)},\log,\mathcal O}), 
\end{equation*}
since $H_{et,(Z\backslash Z^o)\backslash(Z\backslash Z^o)^o}^1(Z,\Omega^{2l-j}_{\hat Z^{(p)},\log,\mathcal O})=0$
for dimension reasons, that is 
\begin{equation*}
\alpha=\Omega(\hat G_{Z^{\mathcal O},X^{\mathcal O}})(\alpha), \, \mbox{with} \, \alpha\in H^0_{et}(Z,\Omega^{2l-j}_{\hat Z^{(p)},\log,\mathcal O}).
\end{equation*} 
Hence $\alpha=0$ since $H^0(Z',\Omega^k_{Z^{'(p)},\log,\mathcal O})=0$ for all $k>0$ and all $Z'\in\PVar(K)$.
\end{itemize}

\end{proof}

\section{Tate conjecture}

Let $k$ be a field of finite type over $\mathbb Q$. 
Denote $\bar k$ the algebraic closure of $k$ and $G=Gal(\bar k/k)$ the absolute Galois group of $k$.

Let $X\in\SmVar(k)$ be a smooth variety.   
Let $p\in\mathbb N$ a prime number. Consider an embedding $\sigma_p:k\hookrightarrow\mathbb C_p$.
Denote $\hat k_{\sigma_p}\subset\mathbb C_p$ the $p$-adic completion of $k$ with respect to $\sigma_p$.
We have the commutative diagram in $C_{\mathbb B_{dr}fil,\hat G_{\sigma_p}}(X_{\mathbb C_p}^{an,pet})$
\begin{equation*}
\xymatrix{(\mathbb B_{dr,X_{\mathbb C_p}},F)\ar[rrr]^{\alpha(X)} & \, & \, &
(\Omega^{\bullet}_{X_{\mathbb C_p}},F_b)\otimes_{O_{X_{\mathbb C_p}}}(O\mathbb B_{dr,X_{\mathbb C_p}},F) \\
\underline{\mathbb Z_p}_{X_{\mathbb C_p}}
\ar[u]^{\iota'_{X_{\mathbb C_p}^{pet}}:=l\mapsto l.1}
\ar[rrr]^{\iota_{X_{\mathbb C_p}^{pet}}:=(I,0)} & \, & \, &
(\Omega^{\bullet}_{X_{\mathbb C_p},\log,\mathcal O}\otimes\mathbb Z_p,F_b)
\ar[u]_{OL_X\otimes I:=(w\otimes\lambda_n)_{n\in\mathbb N\mapsto (w\otimes\lambda_n)_{n\in\mathbb N}}}}.
\end{equation*}
Consider an integral model $X^{\mathcal O}_{\hat k_{\sigma_p}}\in\Sch^{int}/O_{\hat k_{\sigma_p}}$ 
of $X_{\hat k_{\sigma_p}}$. If $X^{\mathcal O}_{\hat k_{\sigma_p}}$ has good reduction modulo $p$,
we have (see e.g. \cite{ChinoisCrys}) the embedding in $C((X^{\mathcal O}_{\hat k_{\sigma_p}})^{Falt})$
\begin{equation*}
\alpha(X):\mathbb B_{cris, X_{\hat k_{\sigma_p}}}\hookrightarrow 
a_{\bullet*}\Omega^{\bullet}_{X^{\mathcal O,\bullet}_{\hat k_{\sigma_p}}}
\otimes_{O_{X^{\mathcal O}}}O\mathbb B_{cris,X^{\bullet}_{\hat k_{\sigma_p}}}
\end{equation*}
which is a filtered quasi-isomorphism compatible with 
the action of $Gal(\mathbb C_p/\hat k_{\sigma_p})$ and the action of the Frobenius $\phi_p$,
where $(X^{\mathcal O}_{\hat k_{\sigma_p}})^{Falt}$ denote the Falting site.

\begin{defi}\label{deltakX}
For $k$ a field of finite type over $\mathbb Q$ and $X\in\SmVar(k)$, 
we denote $\delta(k,X)^0\subset\Spec(O_k)$ the finite set where $O_k\subset k$ is the ring of integers 
such that if $p\in\mathbb N\backslash a_{O_k}(\delta(k,X)^0)$ is a prime number, $k$ is unramified at $p$ and 
there exists an integral model $X^{\mathcal O}_{\hat k_{\sigma_p}}\in\Sch^{int}/O_{\hat k_{\sigma_p}}$ 
of $X_{\hat k_{\sigma_p}}$ with good reduction modulo $p$ for all embeddings $\sigma_p:k\hookrightarrow\mathbb C_p$,
$\hat k_{\sigma_p}\subset\mathbb C_p$ being the $p$-adic completion of $k$ with respect to $\sigma_p$.
Here $a_{O_k}:\Spec(O_k)\to\Spec(\mathbb Z)$ is the canonical map given by the inclusion $\mathbb Z\subset O_k$.
To see that $\delta(k,X)^0$ is a finite set, considering $k=k_0(x_1,\cdots,x_d)(x_{d+1})$
where $x_1,\cdots x_d$ are algebraicaly independent and $k_0$ is a number field
one can take an integral model $Y\to O_{k_0}[x_1,\cdots,x_{d+1}]/f_{d+1}$ of $X$,
a desingularization of $Y'\to Y$ of $Y$ and then see that $\delta(k,X)$ is contained in the discriminant of the
family $Y'\to Y\to O_{k_0}[x_1,\cdots,x_{d+1}]/f_{d+1}$.
\end{defi}

Let $k$ be a field of finite type over $\mathbb Q$. Let $X\in\PSmVar(k)$ be a smooth projective variety.
Let $p\in\mathbb N\backslash a_{O_k}(\delta(k,X)^0)$ be a prime number. Consider an embedding $\sigma_p:k\hookrightarrow\mathbb C_p$.
Then $X_{\hat k_{\sigma_p}}$ has good reduction modulo $p$ and let 
$X_{\hat k_{\sigma_p}}^{\mathcal O}\in\PSch/O_{\hat k_{\sigma_p}}$ be a smooth model, 
i.e. $X_{\hat k_{\sigma_p}}^{\mathcal O}\otimes_{O_{\hat k_{\sigma_p}}}\hat k_{\sigma_p}=X_{\hat k_{\sigma_p}}$
and $X_{\hat k_{\sigma_p}}^{\mathcal O}$ is smooth with smooth special fiber.
The main result of \cite{ChinoisCrys} say in this case that the embedding in $C((X^{\mathcal O}_{\hat k_{\sigma_p}})^{Falt})$
\begin{equation*}
\alpha(X):\mathbb B_{cris, X_{\hat k_{\sigma_p}}}\hookrightarrow 
a_{\bullet*}\Omega^{\bullet}_{X^{\mathcal O,\bullet}_{\hat k_{\sigma_p}}}
\otimes_{O_{X^{\mathcal O}}}O\mathbb B_{cris,X^{\bullet}_{\hat k_{\sigma_p}}}
\end{equation*}
induces a filtered quasi-isomorphism compatible 
for each $j\in\mathbb Z$, a filtered isomorphism of filtered abelian groups
\begin{eqnarray*}
H^jR\alpha(X):H_{et}^j(X_{\mathbb C_p},\mathbb Z_p)\otimes_{\mathbb Z_p}\mathbb B_{cris,\hat k_{\sigma_p}}
\xrightarrow{H^jT(a_X,\mathbb B_{cris})^{-1}}
H_{et}^j((X)^{Falt})(\mathbb B_{cris, X_{\hat k_{\sigma_p}}}) \\
\xrightarrow{H^jR\Gamma(X_{\hat k_{\sigma_p}}^{\mathcal O},\alpha(X))}
H^j_{DR}(X_{\hat k_{\sigma_p}})\otimes_{\hat k_{\sigma_p}}\mathbb B_{cris,\hat k_{\sigma_p}}
\end{eqnarray*}
compatible with the action of $G_p:=Gal(\mathbb C_p/\hat k_{\sigma_p})$ and of the Frobenius $\phi_p$.

\begin{defi}\label{walpha}
Let $k$ be a field of finite type over $\mathbb Q$. 
Denote $\bar k$ the algebraic closure of $k$ and $G=Gal(\bar k/k)$ the absolute Galois group of $k$.
Let $X\in\PSmVar(k)$. Let $p\in\mathbb N\backslash a_{O_k}(\delta(k,X)^0)$ be a prime number. 
Consider an embedding $\sigma_p:k\hookrightarrow\mathbb C_p$.
Denote $\hat k_{\sigma_p}\subset\mathbb C_p$ the $p$-adic completion of $k$ with respect to $\sigma_p$.
For $\alpha\in H^j_{et}(X_{\mathbb C_p},\mathbb Z_p)$, we consider
\begin{eqnarray*}
w(\alpha):=H^jR\alpha(X)(\alpha\otimes 1)\in
H^j_{DR}(X_{\hat k_{\sigma_p}})\otimes_{\hat k_{\sigma_p}}\mathbb B_{cris,\hat k_{\sigma_p}}.
\end{eqnarray*}
the associated de Rham class by the $p$-adic periods. We recall
\begin{eqnarray*}
H^jR\alpha(X):H_{et}^j(X_{\mathbb C_p},\mathbb Z_p)\otimes_{\mathbb Z_p}\mathbb B_{cris,\hat k_{\sigma_p}}
\xrightarrow{\sim}H^j_{DR}(X_{\hat k_{\sigma_p}})\otimes_{\hat k_{\sigma_p}}\mathbb B_{cris,\hat k_{\sigma_p}}
\end{eqnarray*}
is the canonical filtered isomorphism compatible with the action of $G_p:=Gal(\mathbb C_p/\hat k_{\sigma_p})$ 
and with the action of the Frobenius $\phi_p$.
\end{defi}

We have the following key proposition (the projective case of \cite{B6}), we state and prove it
for smooth projective varieties, the case for smooth varieties is obtained in the same way
using a smooth compactification with normal crossing divisors. The projective case suffices for our purpose:

\begin{prop}\label{LatticeLog}
(projective case of \cite{B6} proposition 4(i)).
Let $k$ be a field of finite type over $\mathbb Q$. Let $X\in\PSmVar(k)$.
Let $p\in\mathbb N\backslash a_{O_k}(\delta(k,X)^0)$ be a prime number (see definition \ref{deltakX}). 
Consider an embedding $\sigma_p:k\hookrightarrow\mathbb C_p$.
Denote by $k\subset\hat k_{\sigma_p}\subset\mathbb C_p$ the $p$-adic completion of $k$ with respect to $\sigma_p$.
Consider $X_{\hat k_{\sigma_p}}^{\mathcal O}\in\PSch/O_{\hat k_{\sigma_p}}$ a smooth integral model of $X_{\hat k_{\sigma_p}}$,
i.e. $X_{\hat k_{\sigma_p}}^{\mathcal O}\otimes_{O_{\hat k_{\sigma_p}}}\hat k_{\sigma_p}=X_{\hat k_{\sigma_p}}$
and $X_{\hat k_{\sigma_p}}^{\mathcal O}$ is smooth with smooth special fiber. Denote $G_p:=Gal(\mathbb C_p/\hat k_{\sigma_p})$
Let $j\in\mathbb Z$. We have, see definition \ref{wlogdef}(iii), 
\begin{eqnarray*}
H^jR\alpha(X)(H^j_{et}(X_{\mathbb C_p},\mathbb Z_p)^{G_p})=H^jOL_{\hat X_{\hat k_{\sigma_p}}}
(\mathbb H^j_{pet}(X_{\hat k_{\sigma_p}},\Omega^{\bullet}_{\hat X^{(p)}_{\hat k_{\sigma_p}},\log,\mathcal O})) 
\subset H^j_{DR}(X_{\hat k_{\sigma_p}})\otimes_{\hat k_{\sigma_p}}\mathbb B_{cris,\hat k_{\sigma_p}},
\end{eqnarray*}
where we recall $c:\hat X_{\hat k_{\sigma_p}}^{\mathcal O}\to X_{\hat k_{\sigma_p}}$ is 
the morphism in $\RTop$ given by the completion of $X_{\hat k_{\sigma_p}}^{\mathcal O}$ with respect to $(p)$, and 
\begin{equation*}
c^*:H^j_{DR}(X_{\hat k_{\sigma_p}})\xrightarrow{\sim}H^j_{DR}(\hat X^{\mathcal O}_{\hat k_{\sigma_p}})\otimes_{O_{\hat k_{\sigma}}}\hat k_{\sigma} 
\end{equation*}
is an isomorphism by GAGA and the degenerescence of the Hodge De Rham spectral sequence.
\end{prop}

\begin{proof} 
Consider $c:\hat X^{\mathcal O}_{\hat k_{\sigma_p}}\to X_{\hat k_{\sigma_p}}^{\mathcal O}$ 
the morphism in $\RTop$ which is the formal completion along the ideal $(p)$.
Take a Zariski or etale cover $r=(r_i:X^{\mathcal O}_i\to X^{\mathcal O})_{1\leq i\leq r}$ such that for each $i$ there exists an etale map
$e_i:X^{\mathcal O}_i\to\mathbb G_m^{\mathcal O,d_{X_i}}\subset\mathbb A_{O_{\hat k_{\sigma_p}}}^{d_{X_i}}$
with $e_i:X^{\mathcal O}_i\to e_i(X^{\mathcal O}_i)$ finite etale. 
Then, by \cite{ChinoisCrys}, we have for each $i$ explicit lifts of Frobenius
\begin{equation*}
\phi^i:X^{\mathcal O}_{i,\hat k_{\sigma_p}}\to X^{\mathcal O}_{i,\hat k_{\sigma_p}},
\phi^i_n:=\phi_i/p^n:X^{\mathcal O}_{i,\hat k_{\sigma_p}}/p^n\to X^{\mathcal O}_{i,\hat k_{\sigma_p}}/p^n 
\end{equation*}
of the Frobenius $\phi_1^i:X^{\mathcal O}_{i,\hat k_{\sigma_p}}/p\to X^{\mathcal O}_{i,\hat k_{\sigma_p}}/p$.
In particular for $n'>n$ the following diagram commutes
\begin{equation*}
\xymatrix{0\ar[r] & O_{X_{i,\hat k_{\sigma_p}}^{\mathcal O}/p^{n'-n}}\ar[r]^{p^n\cdot} & 
O_{X_{i,\hat k_{\sigma_p}}^{\mathcal O}/p^{n'}}\ar[r]^{/p^{n'-n}} &
O_{X_{i,\hat k_{\sigma_p}}^{\mathcal O}/p^n}\ar[r] & 0 \\
0\ar[r] & O_{X_{i,\hat k_{\sigma_p}}^{\mathcal O}/p^{n'-n}}\ar[r]^{p^n\cdot}\ar[u]^{\phi^i_{n'-n}} & 
O_{X_{i,\hat k_{\sigma_p}}^{\mathcal O}/p^{n'}}\ar[r]^{/p^{n'-n}}\ar[u]^{\phi^i_{n'}} &
O_{X_{i,\hat k_{\sigma_p}}^{\mathcal O}/p^n}\ar[u]^{\phi^i_n}\ar[r] & 0}
\end{equation*}
and such that the action of $\phi^i_n$ on $\Omega^{\bullet}_{X_{i,\hat k_{\sigma_p}}^{\mathcal O}/p^n}$ 
is a morphism of complex, i.e. commutes with the differentials. 
Moreover by Dieudonne Cartier (see \cite{Illusie}), we have a canonical morphism of rings
\begin{equation*}
JW:O_{X^{\mathcal O}_{\hat k_{\sigma_p}}}\xrightarrow{/p^n}O_{X^{\mathcal O}_{\hat k_{\sigma_p}}/p^n}\xrightarrow{JW_n}W_nO(X^{\mathcal O}_{\hat k_{\sigma_p}}/p)
\end{equation*}
whose restriction to $X^{\mathcal O}_{i,\hat k_{\sigma_p}}$ commutes with $\phi_i$.
On the other hand, by \cite{Illusie}, we have action of the Frobenius on 
$H^j_{DR}(X_{\hat k_{\sigma_p}})=H^j_{DR}(X^{\mathcal O}_{\hat k_{\sigma_p}})\otimes_{O_{\hat k_{\sigma_p}}}\hat k_{\sigma_p}$
by
\begin{equation*}
\phi:H^j_{DR}(\hat X^{\mathcal O}_{\hat k_{\sigma_p}})\xrightarrow{H^jIW}
\mathbb H_{et}^j(X_{\hat k_{\sigma_p}},W\Omega^{\bullet}_{X^{\mathcal O}_{\hat k_{\sigma_p}}})
\xrightarrow{\phi_{W(X_{\hat k_{\sigma_p}})}^*}
\mathbb H_{et}^j(X_{\hat k_{\sigma_p}},W\Omega^{\bullet}_{X^{\mathcal O}_{\hat k_{\sigma_p}}})
\xrightarrow{H^jIW^{-1}}H^j_{DR}(\hat X^{\mathcal O}_{\hat k_{\sigma_p}}),
\end{equation*}
where $IW=(IW_n)_{n\in\mathbb N}$ is given by the morphisms of complexes in $C(X^{\mathcal O}_{\hat k_{\sigma_p}})$
\begin{equation*}
IW_n:\Omega^{\bullet}_{X^{\mathcal O}_{\hat k_{\sigma_p}}/p^n}\xrightarrow{\Omega(JW_n)}
\Omega^{\bullet}_{W_n(X^{\mathcal O}_{\hat k_{\sigma_p}}/p)}\to W_n\Omega^{\bullet}_{X^{\mathcal O}_{\hat k_{\sigma_p}}/p}
\end{equation*}
induced by the morphism of rings 
$JW_n:O_{X^{\mathcal O}_{\hat k_{\sigma_p}}/p^n}\to W_nO(X^{\mathcal O}_{\hat k_{\sigma_p}}/p)$.
We then have the following commutative diagram, where $R:=[1,\ldots,r]$ and $X_R:=X_1\times_X\cdots\times_X X_r$,
\begin{equation*}
\xymatrix{W\Omega^{\bullet}_{X^{\mathcal O}_{\hat k_{\sigma_p}}}\ar[rr]^{r_i^*} & \, &
\oplus_{i=1}^rr_{i*}W\Omega^{\bullet}_{X^{\mathcal O}_{i,\hat k_{\sigma_p}}}\ar[rr]^{r_I^*} & \, & 
r_{R*}W\Omega^{\bullet}_{X^{\mathcal O}_{R,\hat k_{\sigma_p}}} \\
W\Omega^{\bullet}_{X^{\mathcal O}_{\hat k_{\sigma_p}}}\ar[rr]^{r_i^*}\ar[u]^{I-\phi} & \, &
\oplus_{i=1}^rr_{i*}W\Omega^{\bullet}_{X^{\mathcal O}_{i,\hat k_{\sigma_p}}}\ar[rr]^{r_I^*}\ar[u]^{I-\phi^i} & \, &
r_{R*}W\Omega^{\bullet}_{X^{\mathcal O}_{R,\hat k_{\sigma_p}}}\ar[u]^{I-\phi^R} \\
\Omega^{\bullet}_{\hat X^{(p)}_{\hat k_{\sigma_p}},\log,\mathcal O}
\ar[rr]^{r_i^*}\ar[u]^{r^*OL_{\hat X^{\mathcal O}_{\hat k_{\sigma_p}}}\circ IW_*} & \, &
\oplus_{i=1}^rr_{i*}\Omega^{\bullet}_{\hat X^{(p)}_{i,\hat k_{\sigma_p}},\log,\mathcal O}
\ar[rr]^{r_I^*}\ar[u]^{r^*OL_{\hat X_{i,\hat k_{\sigma_p}}^{\mathcal O,(p)}}\circ IW*} & \, &
r_{R*}\Omega^{\bullet}_{\hat X^{(p)}_{R,\hat k_{\sigma_p}},\log,\mathcal O}
\ar[u]^{r^*OL_{\hat X^{\mathcal O,(p)}_{R,\hat k_{\sigma_p}}}\circ IW_*}}
\end{equation*}
and $(I-\phi)\circ r^*OL_{\hat X^{\mathcal O,(p)}_{\hat k_{\sigma_p}}}\circ IW_*=0$.
It induces the following commutative diagram, where $R:=[1,\ldots,r]$ and $X_R:=X_1\times_X\cdots\times_X X_r$,
\begin{equation}\label{Frob}
\xymatrix{\cdots\ar[r] & H^{j-r}_{DR}(\hat X^{\mathcal O}_{R,\hat k_{\sigma_p}})\ar[r]^{\partial} & 
H^j_{DR}(\hat X^{\mathcal O}_{\hat k_{\sigma_p}})\ar[r]^{r_i^*} &
\oplus_{i=1}^rH^j_{DR}(\hat X^{\mathcal O}_{i,\hat k_{\sigma_p}})\ar[r]^{r_I^*} & \cdots \\
\cdots\ar[r] & H^{j-1}_{DR}(\hat X^{\mathcal O}_{R,\hat k_{\sigma_p}})\ar[r]^{\partial}\ar[u]^{I-\phi^R} & 
H^j_{DR}(\hat X^{\mathcal O}_{\hat k_{\sigma_p}})\ar[r]^{r_i^*}\ar[u]^{I-\phi} &
\oplus_{i=1}^rH^j_{DR}(\hat X^{\mathcal O}_{i,\hat k_{\sigma_p}})\ar[r]^{r_I^*}\ar[u]^{I-\phi^i} & \cdots \\
\cdots\ar[r] & 
\mathbb H_{pet}^{j-r}(X_{R,\hat k_{\sigma_p}},\Omega^{\bullet}_{\hat X^{(p)}_{\hat k_{\sigma_p}},\log,\mathcal O})
\ar[r]^{\partial}\ar[u]^{H^{j-1}r^*OL_{\hat X^{\mathcal O,(p)}_{R,\hat k_{\sigma_p}}}} &
\mathbb H_{pet}^j(X_{\hat k_{\sigma_p}},\Omega^{\bullet}_{\hat X^{(p)}_{\hat k_{\sigma_p}},\log,\mathcal O})
\ar[r]^{r_i^*}\ar[u]^{H^jr^*OL_{\hat X^{\mathcal O,(p)}_{\hat k_{\sigma_p}}}} &
\oplus_{i=1}^r\mathbb H_{pet}^j(X_{i,\hat k_{\sigma_p}},
\Omega^{\bullet}_{\hat X^{(p)}_{\hat k_{\sigma_p}},\log,\mathcal O})
\ar[r]^{r_I^*}\ar[u]^{H^jr^*OL_{\hat X_{i,\hat k_{\sigma_p}}^{\mathcal O,(p)}}} & \cdots}
\end{equation}
whose rows are exact sequences and $(I-\phi)\circ H^jr^*OL_{\hat X^{\mathcal O}_{\hat k_{\sigma_p}}}=0$.
By \cite{ChinoisCrys},
$\alpha\in H^j_{et}(X_{\mathbb C_p},\mathbb Z_p)$ is such that $w(\alpha)\in H^j_{DR}(X_{\hat k_{\sigma_p}})$
if and only if  
\begin{equation*}
w(\alpha)\in\ker(I-\phi:H^j_{DR}(\hat X^{\mathcal O}_{\hat k_{\sigma_p}})\to H^j_{DR}(\hat X^{\mathcal O}_{\hat k_{\sigma_p}})).
\end{equation*}  
On the other hand, considering $X^{\mathcal O,pet}\subset((\Sch^{int,sm}/O_K)/X^{\mathcal O})^{pet}$, 
for each $I\subset[1,\ldots,r]$, the sequence in $C(X_I^{\mathcal O,pet})$
\begin{eqnarray*}
0\to\Omega^{\bullet}_{\hat X^{(p)}_{I,\hat k_{\sigma_p}},\log,\mathcal O}
\xrightarrow{OL_{\hat X_{\hat k_{\sigma_p}}^{\mathcal O,(p)}}}
\Omega^{\bullet}_{\hat X^{\mathcal O,(p)}_{I,\hat k_{\sigma_p}}} 
\xrightarrow{\phi^I-I:=(\phi^I_n-I)_{n\in\mathbb N}}
\Omega^{\bullet}_{\hat X^{\mathcal O,(p)}_{I,\hat k_{\sigma_p}}}\to 0
\end{eqnarray*}
is exact for the pro-etale topology by proposition \ref{Ilprop} 
and since for each $l,n\in\mathbb N$ the map in $\PSh(X^{\mathcal O,pet})$
\begin{equation*}
\Omega(/p):\Omega^l_{X^{\mathcal O}_{\hat k_{\sigma_p}}/p^n}\to\Omega^l_{X^{\mathcal O}_{\hat k_{\sigma_p}}/p^{n+1}} 
\end{equation*}
are surjective for the etale topology and since the pro-etale site is a replete topos by \cite{BSch}. 
Hence, for each $I\subset[1,\ldots,r]$, by applying $r^*$ where $r:X_I^{pet}\to X_I^{\mathcal O,pet}$,
the sequence in $C(X_I^{pet})$
\begin{eqnarray*}
0\to r^*\Omega^{\bullet}_{\hat X^{(p)}_{I,\hat k_{\sigma_p}},\log,\mathcal O}
\xrightarrow{r^*OL_{\hat X_{\hat k_{\sigma_p}}^{\mathcal O,(p)}}}
r^*\Omega^{\bullet}_{\hat X^{\mathcal O}_{I,\hat k_{\sigma_p}}} 
\xrightarrow{\phi^I-I:=(\phi^I_n-I)_{n\in\mathbb N}}
r^*\Omega^{\bullet}_{\hat X^{\mathcal O}_{I,\hat k_{\sigma_p}}}\to 0
\end{eqnarray*}
is exact for the pro-etale topology.
Hence the columns 
\begin{equation*}
\mathbb H_{pet}^{q}(X_{I,\hat k_{\sigma_p}},\Omega^{\bullet}_{\hat X^{(p)}_{\hat k_{\sigma_p}},\log,\mathcal O})
\xrightarrow{H^qr^*OL_{\hat X_{\hat k_{\sigma_p}}^{\mathcal O,(p)}}}
H^{q}_{DR}(\hat X^{\mathcal O}_{I,\hat k_{\sigma_p}})\xrightarrow{I-\phi}H^{q}_{DR}(\hat X^{\mathcal O}_{I,\hat k_{\sigma_p}})
\end{equation*}
of the diagram (\ref{Frob}) are exact. Hence, since the rows of this diagram (\ref{Frob}) are exact,
the remaining columns are also exact. One can also use the Mayer-Vietoris spectral sequence. This proves the proposition.
\end{proof}

We will use the following lemma
\begin{lem}\label{Aut}
\begin{itemize}
\item[(i)] Let $(K,v)$ a field of characteristic zero endowed with a valuation $v$ (e.g. a non archimedian field). 
Let $(L,v_1,\cdots,v_s)/(K,v)$ where $L/K$ is a finite type extension and $v_1,\cdots,v_s$ is a finite set of valuations of $L$ lifting of $v$.
Then there exists a finite extension $L'/L$ such that $Aut_K(L')$ acts transitively on lifts $(v'_1,\cdots,v'_s)$
\item[(ii)] Let $K$ be a $p$-adic field. 
Let $Z^{\mathcal O}\in\Sch^{int}/O_K$, $Z^{\mathcal O}=\cup_{i=1}^rZ^{\mathcal O}_i$ where $(Z^{\mathcal O}_i)_{1\leq i\leq r}$ are the irreducible components of $Z^{\mathcal O}$.
For each $1\leq i\leq r$, $Z_i^{\mathcal O}/p=\cup_{j=1}^{s_i}((Z^{\mathcal O}_i)/p)_j$ 
where $((Z^{\mathcal O}_i/p))_{1\leq j\leq s_i}$ are the irreducible components of $Z_i^{\mathcal O}/p$.
Then there exists an etale morphism  
$r=(r_i):Z^{'\mathcal O}:=\sqcup_{i=1}^rZ_i^{'\mathcal O}\to Z^{\mathcal O}$ in $\Sch^{int}/O_K$ with $r_i:Z_i^{'\mathcal O}\to Z_i^{\mathcal O}$ dominant,
such that $Aut_{O_K}(Z^{'\mathcal O})$ acts transitively on $((Z_i^{'\mathcal O}/p)_j)_{1\leq j\leq s_i}$ for each $1\leq i\leq r$.
\item[(iii)]Let $K$ be a $p$-adic field. Let $X^{\mathcal O}\in\Sch^{int}/O_K$ irreducible and 
$Z^{\mathcal O}\subset X^{\mathcal O}$ be a closed subset of integral models over $O_K$.
Denote by $(Z^{\mathcal O}_i)_{1\leq i\leq r}$ the irreducible components of $Z^{\mathcal O}$.
For each $1\leq i\leq r$, $Z_i^{\mathcal O}/p=\cup_{j=1}^{s_i}((Z^{\mathcal O}_i)/p)_j$ 
where $((Z^{\mathcal O}_i/p)_j)_{1\leq j\leq s_i}$ are the irreducible components of $Z_i^{\mathcal O}/p$.
Then there exists an etale morphism $r_e:X_e^{\mathcal O}\to X^{\mathcal O}$ in $\Sch^{int}/O_K$ with $r_e(X^{\mathcal O}_e)\cap Z_i\neq\emptyset$ 
such that $Aut_{O_K}(X_e^{\mathcal O},Z^{\mathcal O})$ acts transitively on 
$((Z^{\mathcal O}_i/p)_j)_{1\leq j\leq s_i}$, where $Aut_{O_K}(X_e^{\mathcal O},Z^{\mathcal O})$ denote the automorphism $g$ of $X_e^{\mathcal O}$ over $O_K$
such that $g(r_e^{-1}(Z^{\mathcal O}))=r_e^{-1}(Z^{\mathcal O})$.
\end{itemize}
\end{lem}

\begin{proof}
\noindent(i):Follows from the fact that $(L,v_1,\cdots,v_s)$ are the lift of $(L_0,v_1,\cdots,v_s)$, $L/L_0/K$, for some finite extension $L_0/K$

\noindent(ii):Follows from (i) applied to $(K,v)=(K,v_p)$ where $v_p$ is the $p$-adic valuation, 
and $(L,v_1,\cdots,v_s)=(K(Z_i),((Z^{\mathcal O}_i)/p)_1,\cdots,((Z^{\mathcal O}_i)/p)_{s_i})$ for each $1\leq i\leq r$.

\noindent(iii): Follows from (ii) and the lifting property of hensel semi-local rings. 
Indeed there exists a finite subgroup $M\subset Aut_{O_K}(Z^{'\mathcal O})$ which acts transitively on $((Z_i^{'\mathcal O}/p)_j)_{1\leq j\leq s_i}$ for each $1\leq i\leq r$.

\end{proof}

Proposition \ref{LatticeLog} together with proposition \ref{GAGAlog} and lemma \ref{Aut} implies the Tate conjecture for 
smooth projective varieties over fields of finite type over $\mathbb Q$ :

\begin{thm}\label{Tate}
Let $k$ be a field of finite type over $\mathbb Q$. Denote $O_k\subset k$ its ring of integers.
Let $X\in\PSmVar(k)$. Let $p\in\mathbb N\backslash a_{O_k}(\delta(k,X))^0$ be a prime number, 
where $\delta(k,X)^0\subset\Spec(O_k)$ is the finite set given in definition \ref{deltakX} and $a_{O_k}:\Spec(O_k)\to\Spec(\mathbb Z)$ is the canonical map.
Then the Tate conjecture holds for $X$. That is for $d\in\mathbb Z$, the cycle class map
\begin{equation*}
\mathcal Z^d(X)\otimes\mathbb Q_p\to H^{2d}_{et}(X_{\bar k},\mathbb Q_p)(d)^G, \; Z\mapsto[Z]
\end{equation*}
is surjective, where $G:=Gal(\bar k/k)$.
\end{thm}

\begin{proof}
Up to split $X$ by its connected components we may assume that $X$ is connected, hence irreducible, of dimension $d_X$.
Consider an embedding $\sigma_p:k\hookrightarrow\mathbb C_p$.
Then $k\subset\bar k\subset\mathbb C_p$ and $k\subset\hat k_{\sigma_p}\subset\mathbb C_p$,
where $\hat k_{\sigma_p}$ is the $p$-adic field which is the completion of $k$ with respect the $p$ adic norm given by $\sigma_p$. 
Consider $X_{\hat k_{\sigma_p}}^{\mathcal O}\in\PSch/O_{\hat k_{\sigma_p}}$ a smooth model of $X_{\hat k_{\sigma_p}}$,
i.e. $X_{\hat k_{\sigma_p}}^{\mathcal O}\otimes_{O_{\hat k_{\sigma_p}}}\hat k_{\sigma_p}=X_{\hat k_{\sigma_p}}$
and $X_{\hat k_{\sigma_p}}^{\mathcal O}$ is smooth connected with smooth special fiber.
Consider $c:\hat X_{\hat k_{\sigma_p}}^{\mathcal O}\to X_{\hat k_{\sigma_p}}^{\mathcal O}$ the morphism of ringed spaces 
which is the formal completion of $X_{\hat k_{\sigma_p}}^{\mathcal O}$ along the ideal generated by $p$.
Denote by $G_p:=Gal(\mathbb C_p/\hat k_{\sigma_p})\subset G:=Gal(\bar k/k)$ the local Galois subgroup.
Let $\alpha\in H_{et}^{2d}(X_{\bar k},\mathbb Z_p)(d)^G$. 
Using definition \ref{walpha}, by proposition \ref{LatticeLog}  
\begin{equation*}
w(\alpha)\in H^{2d}OL_{\hat X^{(p)}}(\mathbb H_{pet}^{2d}(X_{\hat k_{\sigma_p}},\Omega^{\bullet}_{\hat X^{(p)}_{\hat k_{\sigma_p}},\log,\mathcal O}))
\subset H^{2d}_{DR}(\hat X^{\mathcal O}_{\hat k_{\sigma_p}})\otimes_{O_{\hat k_{\sigma_p}}}\hat k_{\sigma_p}=H^{2d}_{DR}(X_{\hat k_{\sigma_p}}).
\end{equation*}
Considering the decomposition
\begin{equation*}
\mathbb H_{pet}^{2d}(X_{\hat k_{\sigma_p}},\Omega^{\bullet}_{\hat X^{(p)}_{\hat k_{\sigma_p}},\log,\mathcal O})
=\oplus_{l=0}^{2d}H_{pet}^{2d-l}(X_{\hat k_{\sigma_p}},\Omega^{l}_{\hat X^{(p)}_{\hat k_{\sigma_p}},\log,\mathcal O}),
\end{equation*}
by proposition \ref{GAGAlog} (ii) and (iii) there exists $Z^{\mathcal O}=\cup_{i=1}^rZ_i^{\mathcal O}\subset X^{\mathcal O}_{\hat k_{\sigma_p}}$ a zariski closed subset 
whose irreducible components $(Z_i^{\mathcal O})_{1\leq i\leq r}$ 
are of codimension $d$ and proper surjective over the integral ring $O_{\hat k_{\sigma_p}}$ of $\hat k_{\sigma_p}$ such that
\begin{equation*}
w(\alpha)\in\Im(\Omega(\gamma^{\vee}_{Z}):H^{2d}_{DR,\hat Z^{\mathcal O}}(\hat X^{\mathcal O}_{\hat k_{\sigma_p}})\to 
H^{2d}_{DR}(\hat X^{\mathcal O}_{\hat k_{\sigma_p}})\otimes_{O_{\hat k_{\sigma_p}}}\hat k_{\sigma_p}=H_{DR}^{2d}(X_{\hat k_{\sigma_p}})). 
\end{equation*}
Note that the $Z_i^{\mathcal O}$ have bad reduction in general and that the number of irreducible components of $Z^{\mathcal O}/p$ is greater then the
number of irreducible components of $Z^{\mathcal O}$ in general. We get
\begin{equation*}
w(\alpha)=\sum_{i=1}^r\sum_{j=1}^{s_i}n_{ij}[(Z_i^{\mathcal O}/p)_j]\in 
H^{2d}_{DR}(\hat X^{\mathcal O}_{\hat k_{\sigma_p}})\otimes_{O_{\hat k_{\sigma_p}}}\hat k_{\sigma_p}=H_{DR}^{2d}(X_{\hat k_{\sigma_p}}), 
\end{equation*}
where $n_{ij}\in\mathbb Q_p$ and $((Z_i^{\mathcal O}/p)_j)_{1\leq j\leq s_i}$ are the irreducible components of $Z_i^{\mathcal O}/p$, 
which are all of codimension $d$ in $X^{\mathcal O}_{\hat k_{\sigma_p}}/p$ since the reduction mod $p$ consists on one equation. 
Since $Z^{\mathcal O}_i$ has bad reduction mod $p$ in general, in particular $Z_i^{\mathcal O}/p$ may be reducible while $Z_i^{\mathcal O}$ is irreducible.
By lemma \ref{Aut}(iii), there exists an etale morphism $r_e:X^{\mathcal O}_e\to X^{\mathcal O}_{\hat k_{\sigma_p}}$ in $\Sch^{int}/O_{\hat k_{\sigma_p}}$ 
such that for each $1\leq i\leq r$ we have $r_e(X^{\mathcal O}_e)\cap Z_i\neq\emptyset$ and $Aut_{O_{\hat k_{\sigma_p}}}(X^{\mathcal O}_e,Z^{\mathcal O})$ acts transitively
on $(T_{ij}:=r_e^{-1}((Z_i^{\mathcal O}/p)_j))_{1\leq j\leq s_i}$. Recall $Aut_{O_{\hat k_{\sigma_p}}}(X^{\mathcal O}_e,Z^{\mathcal O})$ denote the automorphisms $g$ of 
$X^{\mathcal O}_e$ over $O_{\hat k_{\sigma_p}}$ such that $g(r_e^{-1}(Z^{\mathcal O}))=r_e^{-1}(Z^{\mathcal O})$.
Take a compactification $\bar r_e:\bar X^{\mathcal O}_e\to X^{\mathcal O}_{\hat k_{\sigma_p}}$ of $r_e$ which is a finite surjective morphism of integral models.
Consider then the finite surjective morphism in $\Sch^{int}/O_{\hat k_{\sigma_p}}$ which is the composite
\begin{equation*}
r':X^{'\mathcal O}\xrightarrow{\epsilon^{\mathcal O}}\bar X^{\mathcal O}_e\xrightarrow{\bar r_e}X^{\mathcal O}_{\hat k_{\sigma_p}},
\end{equation*}
where $\epsilon:(X',E)\to(\bar X_e,\mathcal R)$ is a desingularization of the pair $(\bar X_e,\mathcal R)$ of projective varieties over $\hat k_{\sigma_p}$,
where $\mathcal R$ is the ramification locus of $\bar r_e\otimes_{O_{\hat k_{\sigma_p}}}\hat k_{\sigma_p}$.
We have then, considering $Z^{\mathcal O}/p=\cup_{i=1}^r\cup_{j=1}^{s_i}(Z_i^{\mathcal O}/p)_j$,
\begin{equation*}
Z^{'\mathcal O}:=r^{'-1}(Z^{\mathcal O}), \; Z^{'\mathcal O}/p=\cup_{i=1}^r\cup_{j=1}^{s_i}T_{ij}, \; T_{ij}:=r^{'-1}((Z_i^{\mathcal O}/p)_j).
\end{equation*}
Note that $X'$ has bad reduction in general. In particular $X^{'\mathcal O}$ is NOT smooth. 
Consider the factorization $r':X^{'\mathcal O}\xrightarrow{i}\mathbb P^N\times X^{\mathcal O}\xrightarrow{p}X^{\mathcal O}$ 
where $i$ is the graph closed embedding and $p$ is the projection. 
Denote by $C^{\mathcal O}:=C_{X^{'\mathcal O}/\mathbb P^N\times X^{\mathcal O}}\xrightarrow{p'}X^{'\mathcal O}$ the normal cone of $i$.
Since $X'$ is smooth $C:=C^{\mathcal O}\otimes_{O_{\hat k_{\sigma_p}}}\hat k_{\sigma_p}=N_{X'/\mathbb P^N\times X}\xrightarrow{p'}X'$ is a vector bundle.
We denote again by $c:\widehat{\mathbb P^N\times X^{\mathcal O}}\to \mathbb P^N\times X^{\mathcal O}$, $c:\hat C^{\mathcal O}\to C^{\mathcal O}$ the formal completion maps.
In particular $C^{\mathcal O}$ is not smooth but $C$ is smooth. 
Let $k\subset k'\subset\mathbb C_p$ be a subfield of finite type over $\mathbb Q$ over which 
$(Z_i\subset X_{\hat k_{\sigma_p}})_{1\leq i\leq r}$ and $r':X'\to X_{\hat k_{\sigma_p}}$ are defined.
Take an (algebraic) embedding of field $k'(X')\hookrightarrow\mathbb C_p\simeq\mathbb C$. Then 
\begin{equation*}
A:=Aut_{O_{\hat k_{\sigma_p}}}(X^{'\mathcal O},Z^{\mathcal O})\subset Aut_{k'}(k'(X'))\subset Aut_{k'}(\mathbb C)\subset G=Aut_k(\mathbb C)
\end{equation*}
Note that $\hat k_{\sigma_p}(X')$ is NOT a p-adic field.
We have then the following commutative diagram
\begin{equation*}
\xymatrix{H_{et}^{2d}(C_{\bar k},\mathbb Q_p)(d)^{G_p}\ar[rr]^{\alpha\mapsto w(\alpha)} & \, &
H_{DR}^{2d}(C)\ar[r]^{c^*} & H_{DR}^{2d}(\hat C^{\mathcal O})\otimes_{O_{\hat k_{\sigma_p}}}\hat k_{\sigma_p}  \\
H_{et}^{2d}(X_{\bar k},\mathbb Q_p)(d)^{G_p}\ar[rr]^{\alpha\mapsto w(\alpha)}\ar[u]^{p^{'*}r^{'*}} & \, &
H_{DR}^{2d}(X_{\hat k_{\sigma_p}})\ar[r]^{c^*}\ar[u]^{p^{'*}r^{'*}} & H_{DR}^{2d}(\hat X^{\mathcal O})\otimes_{O_{\hat k_{\sigma_p}}}\hat k_{\sigma_p}
\ar[u]^{p^{'*}r^{'*}}}
\end{equation*}
whose arrows of the upper row commute with the action of $Aut_{O_{\hat k_{\sigma_p}}}(C^{\mathcal O})$, in particular with the action of
$A:=Aut_{O_{\hat k_{\sigma_p}}}(X^{'\mathcal O},Z^{\mathcal O})\subset Aut_{O_{\hat k_{\sigma_p}}}(C^{\mathcal O})$. 
The first arrows of the rows are the maps of the p-adic De Rham comparison theorem for $X_{\mathbb C_p}$ and $C_{\mathbb C_p}$ respectively ($C$ and $X$ are smooth).
Since $\alpha\in H_{et}^{2d}(X_{\bar k},\mathbb Q_p)(d)^G$, we have 
$p^{'*}r^{'*}\alpha\in H_{et}^{2d}(C_{\bar k},\mathbb Q_p) (d)^A$ for each $1\leq i\leq r$.
We have then
\begin{itemize}
\item $p^{'*}r^{'*}w(\alpha)=c^*w(p^{'*}r^{'*}\alpha)\in(H_{DR}^{2d}(\hat C^{\mathcal O})\otimes_{O_{\hat k_{\sigma_p}}}\hat k_{\sigma_p})^A$
\item and
\begin{equation*}
p^{'*}r^{'*}w(\alpha)=\sum_{i=1}^r\sum_{j=1}^{s_i}n_{ij}[T_{ij}]\in
\Im(H_{DR,\widehat{p^{'-1}(Z^{'\mathcal O})}}^{2d}(\hat C^{\mathcal O})\to H_{DR}^{2d}(\hat C^{\mathcal O})\otimes_{O_{\hat k_{\sigma_p}}}\hat k_{\sigma_p}), 
\end{equation*}
since 
\begin{equation*}
w(\alpha)=\sum_{i=1}^r\sum_{j=1}^{s_i}n_{ij}[(Z_i^{\mathcal O}/p)_j]\in 
H^{2d}_{DR}(\hat X^{\mathcal O}_{\hat k_{\sigma_p}})\otimes_{O_{\hat k_{\sigma_p}}}\hat k_{\sigma_p}=H_{DR}^{2d}(X_{\hat k_{\sigma_p}}), 
\end{equation*}
\end{itemize}
Take for each $1\leq i\leq r$, $1\leq s'_i\leq s_i$ such that
\begin{equation*}
([(Z_i^{\mathcal O}/p)_j])_{1\leq i\leq r,1\leq j\leq s'_i}\in H^{2d}_{DR}(\hat X^{\mathcal O}_{\hat k_{\sigma_p}})
\otimes_{O_{\hat k_{\sigma_p}}}\hat k_{\sigma_p}=H_{DR}^{2d}(X_{\hat k_{\sigma_p}})
\end{equation*}
are linearly independent. Hence 
\begin{equation*}
([T_{ij}])_{1\leq i\leq r,1\leq j\leq s'_i}\in H_{DR}^{2d}(\hat C^{\mathcal O})\otimes_{O_{\hat k_{\sigma_p}}}\hat k_{\sigma_p}
\end{equation*}
are linearly independent since
\begin{eqnarray*}
r'_*([T_{ij}])=[(Z_i^{\mathcal O}/p)_j], \; 
r'_*:H_{DR,\hat Z^{'\mathcal O}}^{2d}(\hat C^{\mathcal O})\to H_{DR,\hat Z^{\mathcal O}}^{2d}(\hat X^{\mathcal O})\to H_{DR}^{2d}(\hat X^{\mathcal O}).
\end{eqnarray*}
We get, since $A$ acts transitively on $(T_{ij}:=r^{'-1}((Z_i^{\mathcal O}/p)_j))_{1\leq j\leq s_i}$ for each $1\leq i\leq r$,
\begin{equation*}
n_{ij}=n_i, \; \mbox{for each} \; 1\leq i\leq r \; \mbox{and all} \; 1\leq j\leq s_i.
\end{equation*}
This gives 
\begin{eqnarray*}
w(\alpha)=\sum_{i=1}^rn_i[Z_i^{\mathcal O}/p]=\sum_{i=1}^rn_i[Z_i^{\mathcal O}]\in 
H^{2d}_{DR}(\hat X^{\mathcal O}_{\hat k_{\sigma_p}})\otimes_{O_{\hat k_{\sigma_p}}}\hat k_{\sigma_p}=H_{DR}^{2d}(X_{\hat k_{\sigma_p}}), 
\end{eqnarray*}
that is
\begin{equation*}
w(\alpha)=[Z]\in H_{DR}^{2d}(X_{\hat k_{\sigma_p}}), \; Z:=\sum_{i=1}^rn_i[Z_i]\in\mathcal Z^d(X_{\hat k_{\sigma_p}})\otimes\mathbb Q_p.
\end{equation*} 
Note that the local Galois group $G_p:=Gal(\mathbb C_p/\hat k_{\sigma_p})\subset G$ 
fixes the components $(T_{ij})_{1\leq j\leq s_i}$ of $r^{'-1}(Z^{\mathcal O}_i/p)$ for each $1\leq i\leq r$,
since classes of formal algebraic cycles are $G_p$ invariant. Here is where we need that the class $\alpha$ is invariant by the absolute Galois group $G$ of $k$,
not just invariant by $G_p$. 
Since the Hilbert schemes of $X$ are defined over $k$, we get 
\begin{equation*}
w(\alpha)=[Z]=[Z^N]\in H_{DR}^{2d}(X_{\hat k_{\sigma_p}}), \; Z^N\in\mathcal Z^d(X_{\bar k})\otimes\mathbb Q_p.
\end{equation*} 
By the $p$-adic crystalline or de Rham comparison isomorphism, we get $\alpha=[Z^N]\in H_{et}^{2d}(X_{\bar k},\mathbb Z_p)$.
Indeed, we have the corresponding commutative diagram of abelian groups whose rows are exact, 
where $j:X\backslash|Z^N|\hookrightarrow X$ denote the open embedding,
\begin{equation*}
\xymatrix{H_{et,Z^N}^{2d}(X_{\bar k},\mathbb Z_p)\otimes_{\mathbb Z_p}\mathbb B_{dr}
\ar[rrr]^{B_{dr,X}(\gamma^{\vee}_{Z^N})}\ar[d]^{Ra_{X*}\Gamma_{Z'}\alpha(X)} & \, & \, &
H_{et}^{2d}(X_{\bar k},\mathbb Z_p)\otimes_{\mathbb Z_p}\mathbb B_{dr}\ar[r]^{j^*}\ar[d]^{R\alpha(X)} &
H_{et}^{2d}((X\backslash Z^N)_{\bar k},\mathbb Z_p)\otimes_{\mathbb Z_p}\mathbb B_{dr}\ar[d]^{R\alpha(X\backslash Z)} \\
H^{2d}_{DR,Z^N_{\hat k_{\sigma_p}}}(X_{\hat k_{\sigma_p}})\otimes_{\hat k_{\sigma_p}}\mathbb B_{dr}
\ar[rrr]^{DR(X)(OB_{dr,X})(\gamma^{\vee}_{Z^N})} & \, & \, &
H^{2d}_{DR}(X_{\hat k_{\sigma_p}})\otimes_{\hat k_{\sigma_p}}\mathbb B_{dr}\ar[r]^{j^*} &
H^{2d}_{DR}((X\backslash Z^N)_{\hat k_{\sigma_p}})\otimes_{\hat k_{\sigma_p}}\mathbb B_{dr}}.
\end{equation*}
Since $[Z^N]$ is $G$ invariant,
\begin{equation*}
Z^{N'}:=1/\#(gZ^N,g\in G)\sum_{g\in G/G_{Z^N}}gZ^N \in\mathcal Z^d(X)\otimes\mathbb Q_p
\end{equation*}
satisfy $[Z^{N'}]=[Z^N]=\alpha\in H_{et}^{2d}(X_{\bar k},\mathbb Z_p)$.
\end{proof}

\begin{rem}\label{RasEx}
Let $K$ be a $p$-adic field. Denote $O_K\subset K$ its ring of integers and $G_p:Gal(\mathbb C_p/K)$.
In the example of \cite{Liedtke}, we have two ellpitic curves $E$ and $E'$ over $K$ 
and smooth integral models $E^{\mathcal O}$ and $E^{'\mathcal O}$ over $O_K$ of $E$ and $E'$ respectively such that $E^{\mathcal O}/p=E^{'\mathcal O}/p$ 
and $E$ have CM but $E$ is without CM. Hence $E$ and $E'$ are not isogenius.
They consider a canonical class 
\begin{equation*}
\alpha=I_{E^{\mathcal O}/p}\neq 0\in\Hom_{\Mod(G_p)}(V_p(E),V_p(E'))\subset H^2(E_{\mathbb C_p}\times E'_{\mathbb C_p},\mathbb Q_p)^{G_p}
\end{equation*}
given by the identity of $E^{\mathcal O}/p$ and the splitting of $V_p(E)$.
Take $T^{\mathcal O}\subset E^{\mathcal O}\times_{O_K} E^{'\mathcal O}$ irreducible of codimension one such that
\begin{equation*}
T^{\mathcal O}\cap(E^{\mathcal O}\times_{O_K}E^{'\mathcal O})/p=\Delta(E^{\mathcal O}/p)\cup L^p
\end{equation*}
where 
\begin{equation*}
\Delta(E^{\mathcal O}/p)\subset(E^{\mathcal O}\times_{O_K}E^{'\mathcal O})/p=E^{\mathcal O}/p\times_{O_K/p}E^{\mathcal O}/p
\end{equation*}
is the diagonal. Denote $T:=T^{\mathcal O}\cap(E\times E')$
We have 
\begin{equation*}
\alpha=[\Delta(E^{\mathcal O}/p)]\subset\Im(H^2_{DR,\hat T^{\mathcal O}}(\widehat{E^{\mathcal O}\times_{O_K}E^{'\mathcal O}})\to 
H^2_{DR,\hat T^{\mathcal O}}(\widehat{E^{\mathcal O}\times_{O_K}E^{'\mathcal O}})\otimes_{O_K}K=H^2_{DR}(E\times E')
\end{equation*}
that is $\alpha$ is the class of the formal cycle 
\begin{equation*}
\Delta(E^{\mathcal O}/p)\in\mathcal Z^1((E^{\mathcal O}\times_{O_K}E^{'\mathcal O})/p)
\end{equation*}
but is NOT the class of an algebraic cycle. Also note that if $k$ is a field of finite type over $\mathbb Q$ over which $E$ and $E'$ are defined and
$f:E_k^{\mathcal O_k}\times_{O_k}E_k^{'\mathcal O_k}\to\Spec(O_k)$ is an integral model over $O_k$, with $E=E_k\otimes_kK$ and $E'=E'_k\otimes_k K$,
\begin{itemize}
\item $p\in\delta(k,E_k\times E_k')$, where $\delta(k,E_k\times E_k')\backslash\delta(k,E_k\times E_k')^0$ is the set of $p$ which has non generic Neron-Severi group,
which known to be a finite set for divisor (unknown in codimension greater then one),  
\item for $l\in\Spec(O_k)\backslash\delta(k,E_k\times E_k')$, $E_k^{\mathcal O_k}/l$ and $E_k^{'\mathcal O_k}/l$ are not isogenius,
\item the subspaces $H^2(E_{\bar k}\times E'_{\bar k},\mathbb Q_p)^G\subset H^2(E_{\bar k}\times E'_{\bar k},\mathbb Q_p)$, 
and $H^2(E_{\bar k}\times E'_{\bar k},\mathbb Q_l)^G\subset H^2(E_{\bar k}\times E'_{\bar k},\mathbb Q_l)$, $l\neq p$ a prime number, 
where $G:=Gal(\bar k/k)$, are of rank two.
\end{itemize}
\end{rem}

It is well known that theorem \ref{Tate} implies the following :

\begin{cor}\label{TateCor}
Let $X\in\PSmVar(\mathbb C)$. 
\begin{itemize}
\item[(i)] The standard conjectures holds for $X$.
\item[(ii)] Let $k\subset\mathbb C$ be a subfield of finite type over $\mathbb Q$ over which $X$ is defined,
that is $X\simeq X_k\otimes_k\mathbb C$ in $\PSmVar(\mathbb C)$ with $X_k\in\PSmVar(k)$. 
For $\theta\in Aut(\mathbb C/\mathbb Q)$, we have the isomorphism $\theta:X\xrightarrow{\sim}X_{\theta}$ in $\Sch$.
Then $\theta\in Aut(\mathbb C/\mathbb Q)$ induces, for each $j\in\mathbb Z$, an isomorphism
\begin{equation*}
\theta:H_{\sing}^j(X^{an},\mathbb C)\xrightarrow{H^jev(X)^{-1}}H^j_{DR}(X)\xrightarrow{\theta^*}
H^j_{DR}(X_{\theta})\xrightarrow{H^jev(X_{\theta})}H_{\sing}^j(X^{an}_{\theta},\mathbb C).
\end{equation*}
Let $d\in\mathbb N$. Let $\alpha\in F^dH^{2d}(X^{an},\mathbb Q)$ where
$F^dH^{2d}(X^{an},\mathbb Q):=F^dH^{2d}_{DR}(X)\cap H^{2d}_{\sing}(X^{an},\mathbb Q)\subset H^{2d}(X^{an},\mathbb C)$.
If $\theta(\alpha)\in H^{2d}_{\sing}(X^{an}_{\theta},\mathbb Q)$ for all $\theta\in Aut(\mathbb C/k)$, 
then $\alpha=[Z]$ with $Z\in\mathcal Z^d(X)$.
\item[(ii)']Let $\alpha\in H^{2d}(X^{an},\mathbb Q)$.
If $\theta(\alpha)\in H^{2d}_{\sing}(X^{an}_{\theta},\mathbb Q)$ for all $\theta\in Aut(\mathbb C/k)$, 
then $\alpha=[Z]$ with $Z\in\mathcal Z^d(X)$. In particular $\alpha\in F^dH^{2d}(X^{an},\mathbb Q)$.
\end{itemize}
\end{cor}

\begin{proof} 
Standard. We consider, for $j\in\mathbb Z$, $k'\subset\mathbb C$ a subfield and $Y\in\Var(k')$, the canonical morphism
\begin{equation*}
T(Y):=T^j(Y):H^i_{\sing}(Y_{\mathbb C}^{an},\mathbb Q)\xrightarrow{(/p^n)_{n\in\mathbb N}}
H^j_{\sing}(Y_{\mathbb C}^{an},\mathbb Q_p)
\xrightarrow{\sim}H_{et}^j(Y_{\mathbb C},\mathbb Q_p)\xrightarrow{\sim}H_{et}^j(Y_{\bar k'},\mathbb Q_p).
\end{equation*}
For $Y\in\Var(k')$ and $\theta\in Aut(\mathbb C/k')$, 
we have the commutative diagrams in $C(Y\times\mathbb N)$
\begin{eqnarray*}
\xymatrix{\An_*E_{usu}(\mathbb Z)\ar[r]^{(/p^n)_{n\in\mathbb N}}\ar[d]^{\ad(\theta^*,\theta_*)(-)} & 
\An_*E_{usu}(\mathbb Z_p)\ar[d]^{\ad(\theta^*,\theta_*)(-)} & 
E_{et}\mathbb Z_{p,Y_{\mathbb C}}\ar[l]_{\ad(\An^*,\An_*)(\mathbb Z_p)}\ar[d]^{\ad(\theta^*,\theta_*)(-)} \\
\theta_*\theta^*\An_*E_{usu}(\mathbb Z)\ar[r]^{(/p^n)_{n\in\mathbb N}} & \theta_*\theta^*\An_*E_{usu}(\mathbb Z_p) & 
\theta_*\theta^*E_{et}\mathbb Z_{p,Y_{\mathbb C}}=E_{et}\mathbb Z_{p,Y_{\mathbb C,\theta}}
\ar[l]_{\theta_*\theta^*\ad(\An^*,\An_*)(\mathbb Z_p)}\ar[ld]^{\ad(\An_{\theta}^*,\An_{\theta*})(\mathbb Z_p)} \\
\An_{\theta*}E_{usu}(\mathbb Z)\ar[r]^{(/p^n)_{n\in\mathbb N}} & \An_{\theta*}E_{usu}(\mathbb Z_p) &} 
\end{eqnarray*}
and
\begin{eqnarray*}
\xymatrix{\An_*E_{usu}(\mathbb Z)
\ar[r]^{\iota_{2i\pi\mathbb Z/\mathbb C}(Y_{\mathbb C}^{an})}\ar[d]^{\ad(\theta^*,\theta_*)(-)} & 
\An_*E_{usu}(\Omega^{\bullet}_{Y_{\mathbb C}^{an}})\ar[d]^{\ad(\theta^*,\theta_*)(-)} & 
E_{et}\Omega^{\bullet}_{Y_{\mathbb C}}\ar[l]_{\Omega(\An)}\ar[d]^{\ad(\theta^*,\theta_*)(-)} \\
\theta_*\theta^*\An_*E_{usu}(\mathbb Z)\ar[r]^{\iota_{2i\pi\mathbb Z/\mathbb C}(Y_{\mathbb C}^{an})} & 
\theta_*\theta^*\An_*E_{usu}(\Omega^{\bullet}_{Y_{\mathbb C}^{an}}) & 
\theta_*\theta^*E_{et}\Omega^{\bullet}_{Y_{\mathbb C}}=E_{et}\Omega^{\bullet}_{Y_{\mathbb C,\theta}}
\ar[l]_{\theta_*\theta^*\Omega(\An)}\ar[ld]^{\Omega(\An_{\theta})} \\
\An_{\theta*}E_{usu}(\mathbb Z)\ar[r]^{\iota_{2i\pi\mathbb Z/\mathbb C}(Y_{\mathbb C,\theta}^{an})} &
 \An_{\theta*}E_{usu}(\Omega^{\bullet}_{Y^{an}_{\mathbb C,\theta}}) &}
\end{eqnarray*}
where $\An:Y_{\mathbb C}^{an}\to Y$ and $\An_{\theta}:Y_{\mathbb C,\theta}^{an}\to Y$ are the analytical functors, 
usu denote the usual complex topology and $E_{usu}$ is the canonical flasque resolution (see section 2.1).
Hence for $Y\in\SmVar(k')$ and $\alpha\in H^j_{\sing}(Y^{an}_{\mathbb C},\mathbb Q)$ such that 
$\theta(\alpha)\in H^j_{\sing}(Y^{an}_{\mathbb C,\theta},\mathbb Q)$ for some $\theta\in Aut(\mathbb C/k')$, where
\begin{equation*}
\theta:H_{\sing}^j(Y_{\mathbb C}^{an},\mathbb C)\xrightarrow{H^jev(Y)^{-1}}H^j_{DR}(Y_{\mathbb C})\xrightarrow{\theta^*}
H^j_{DR}(Y_{\mathbb C,\theta})\xrightarrow{H^jev(Y_{\mathbb C,\theta})}H_{\sing}^j(Y^{an}_{\mathbb C,\theta},\mathbb C),
\end{equation*}
we have
\begin{equation*}
\theta^*T(Y)(\alpha)=T(Y_{\theta})(\theta(\alpha))\in H^j_{et}(Y_{\bar k'},\mathbb Q_p).
\end{equation*}

\noindent(i): Let $k\subset\mathbb C$ be a subfield of finite type over $\mathbb Q$ such that $X$ is defined.
Denote $G=Gal(\bar k/k)$ the Galois group. Denote $d_X=\dim(X)$ and for short $X^{an}=X$.
Let $L^j\in H^{2d_X-2j}_{\sing}(X\times X,\mathbb Q)$ 
the class inducing the cup product with of the intersection of $j$ hyperplane sections on $H^{d_X-j}_{\sing}(X,\mathbb Q)$
and zero on $H^i_{\sing}(X,\mathbb Q)$, $i\neq d_X-j$.
Let $\Lambda^j\in H^{2d_X+2j}_{\sing}(X\times X,\mathbb Q)$ inducing the inverse of $L_j$ on 
$H^{d_X+j}_{\sing}(X,\mathbb Q)$ and zero on $H^i_{\sing}(X,\mathbb Q)$, $i\neq d_X+j$.
Consider an isomorphism $\sigma_p:\mathbb C\xrightarrow{\sim}\mathbb C_p$ such that 
$\hat k_{\sigma_p}\subset\mathbb C_p$ is unramified and $X$ has good reduction.
We have $T(X\times X)(L^j)\in H^{2d_X-2j}_{et}((X\times X)_{\bar k},\mathbb Q_p)^G$
since it is the class of an algebraic cycle. Hence, 
\begin{equation*}
T(X\times X)(\Lambda^j)\in H^{2d_X+2j}_{et}((X\times X)_{\bar k},\mathbb Q_p)(d_X+j)^G
\end{equation*}
since the inverse of a Galois invariant morphism is Galois invariant.
Hence, by theorem \ref{Tate}, 
\begin{equation*}
\Lambda^j=[Z], \; \mbox{with} \; Z\in\mathcal Z^{d_X+j}(X\times X)\otimes\mathbb Q.
\end{equation*}

\noindent(ii) and(ii)': As $\alpha\in H^{2d}\Gamma(X_{\bullet},\Omega_X)$, where $X=\cup_{i=1}^sX_i$ is an open affine cover,
there exists a subfield $k\subset k'\subset\mathbb C$ of finite type over $\mathbb Q$ over which $\alpha$ is defined,
that is $\alpha\in H^{2d}_{DR}(X_{k'})\subset H^{2d}_{DR}(X)$, 
where $X_{k'}:=X_k\otimes_kk'\in\PSmVar(k')$ satisfy $X_{k'}\otimes_{k'}\mathbb C\simeq X$ in $\PSmVar(\mathbb C)$.
Let $p\in\mathbb N\backslash a_{O_{k'}}(\delta(k',X_{k'}))^0$ be a prime number. Consider 
\begin{equation*}
T(X_{k'})(\alpha)\in H^{2d}(X_{\bar k'},\mathbb Q_p). 
\end{equation*}
If $\theta(\alpha)\in H^{2d}_{\sing}(X^{an}_{\theta},\mathbb Q)$ for all $\theta\in Aut(\mathbb C/k')$, we get
\begin{equation*}
T(X_{k'})(\alpha)=1/\#(Aut(\mathbb C/k')\alpha)
\sum_{\theta\in Aut(\mathbb C/k')}\theta^{-1,*}T(X_{k',\theta})(\theta(\alpha))
\in H_{et}^{2d}(X_{\bar k'},\mathbb Q_p)=H_{et}^{2d}(X,\mathbb Q_p),
\end{equation*} 
hence, for $g\in Gal(\bar k'/k')$, we get
\begin{eqnarray*}
g\cdot T(X_{k'})(\alpha)&=&1/\#(Aut(\mathbb C/k')\alpha)
\sum_{\theta\in Aut(\mathbb C/k')}g\cdot\theta^{-1,*}T(X_{k',\theta})(\theta(\alpha)) \\
&=&1/\#(Aut(\mathbb C/k')\alpha)
\sum_{\theta\in Aut(\mathbb C/k')}\theta^{-1,*}T(X_{k',\theta})(\theta(\alpha))
=T(X_{k'})(\alpha)\in H_{et}^{2d}(X_{\bar k'},\mathbb Q_p),
\end{eqnarray*} 
that is, $T(X_{k'})(\alpha)\in H^{2d}(X_{\bar k'},\mathbb Q_p)(d)^G$, with $G:=Gal(\bar k'/k)$.
Hence, if $\theta(\alpha)\in H^{2d}_{\sing}(X^{an}_{\theta},\mathbb Q)$ for all $\theta\in Aut(\mathbb C/k)$, 
then in particular $\theta(\alpha)\in H^{2d}_{\sing}(X^{an}_{\theta},\mathbb Q)$ for all $\theta\in Aut(\mathbb C/k')$
and we get by theorem \ref{Tate}, 
\begin{equation*}
T(X)(\alpha)=T(X_{k'})(\alpha)=[Z], \; \mbox{with} \; Z\in\mathcal Z^d(X_{k'})\otimes\mathbb Q\subset\mathcal Z^d(X)\otimes\mathbb Q.
\end{equation*}
\end{proof}

We get the following. See \cite{Voisin} for conditions for families where the hypothesis of the theorem holds.

\begin{thm}
Let $X\in\PSmVar(\mathbb C)$, $X=V(I)\subset\mathbb P^N_{\mathbb C}$. 
Consider the canonical deformation 
$f:\mathcal X=V(\tilde I)\subset\mathbb P^N_{\mathbb Q}\times S\to S\subset\bar S$,
with $S\subset\bar S$ the open subset over which $f$ is smooth, where
$\bar S:=\bar{0_X}^{\mathbb Q}\subset\mathbb A^r_{\mathbb Q}$,
$\mathcal X\in\SmVar(\mathbb Q)$, and $X=\mathcal X_s$ with $s:=0_X\in S_{\mathbb C}$. 
Denote $(E^{2d}_{DR}(\mathcal X/S),F):=H^{2d}\int_f(O_{\mathcal X},F_b)\in\Vect_{\mathcal Dfil}(S)$ and
\begin{equation*}
HL^{d,2d}(\mathcal X/S):=
F^dE^{2d}_{DR}(\mathcal X_{\mathbb C}/S_{\mathbb C})\cap f_*ev(\mathcal X)^{-1}(R^{2d}f_*\mathbb Q_{\mathcal X_{\mathbb C}^{an}})
\subset E^{2d}_{DR}(\mathcal X_{\mathbb C}/S_{\mathbb C})
\end{equation*}
the locus of Hodge classes. Let $\lambda\in F^dH^{2d}(X^{an},\mathbb Q)$. 
If the irreducible components $W\subset HL^{d,2d}(\mathcal X/S)$ of the locus of Hodge classes such that 
$\lambda\in W$ are defined over $\bar{\mathbb Q}$ and if 
their Galois conjugates $\sigma(W)$ with $\sigma\in Gal(\bar{\mathbb Q}/\mathbb Q)$ are also components of $HL^{d,2d}(\mathcal X/S)$,
then $\lambda=[Z]\in H^{2d}(X^{an},\mathbb Q)$ with $Z\in\mathcal Z^d(X,\mathbb Q)$. 
\end{thm}

\begin{proof}
We have $\lambda\in\cup_{i=1}^sW_i$ where $W_i\subset HL^{d,2d}(\mathcal X/S)$ 
are the irreducible components passing through $\lambda$.
If the irreducible components $W\subset HL^{d,2d}(\mathcal X/S)$ of the locus of Hodge classes such that 
$\lambda\in W$ are defined over $\bar{\mathbb Q}$ and if 
their Galois conjugates $\sigma(W)$ with $\sigma\in Gal(\bar{\mathbb Q}/\mathbb Q)$ are also components of $HL^{d,2d}(\mathcal X/S)$, 
we get 
\begin{equation*}
\bar\lambda^{\mathbb Q}\subset\cup_{i=1}^s\cup_{\sigma}\sigma(W_i)\subset E^{2d}_{DR}(\mathcal X_{\mathbb C}/S_{\mathbb C}) 
\end{equation*}
since $\cup_{i=1}^s\cup_{\sigma}(W_i)$ is then defined over $\mathbb Q$. Since 
\begin{equation*}
\bar\lambda^{\mathbb Q}=\overline{\left\{\theta^*\lambda,\theta\in Aut(\mathbb C,\mathbb Q)\right\}}\subset E^{2d}_{DR}(\mathcal X_{\mathbb C}/S_{\mathbb C})
\end{equation*}
we get 
\begin{equation*}
\overline{\left\{\theta^*\lambda,\theta\in Aut(\mathbb C,\mathbb Q)\right\}}\subset HL^{d,2d}(\mathcal X/S).
\end{equation*}
In particular, for all $\theta\in Aut(\mathbb C,\mathbb Q)$, $\theta(\lambda)\in H^{2d}_{\sing}(X_{\theta}^{an},\mathbb Q)$.
Hence, by corollary \ref{TateCor}(ii) $\lambda=[Z]\in H^{2d}(X^{an},\mathbb Q)$ with $Z\in\mathcal Z^d(X,\mathbb Q)$.
\end{proof}


LAGA UMR CNRS 7539 \\
Universit\'e Paris 13, Sorbonne Paris Cit\'e, 99 av Jean-Baptiste Clement, \\
93430 Villetaneuse, France, \\
bouali@math.univ-paris13.fr

\end{document}